\DeclareMathAlphabet{\pazocal}{OMS}{zplm}{m}{n}
\theoremstyle{plain}
\newtheorem{thm}{Theorem}[section]
\newtheorem{cor}[thm]{Corollary}
\newtheorem{lemma}[thm]{Lemma}
\newtheorem{prop}[thm]{Proposition}
\newtheorem{definition}[thm]{Definition}
\theoremstyle{remark}
\newtheorem{rem}[thm]{Remark}
\numberwithin{equation}{section}
\newcommand{\eps}{\epsilon}
\newcommand{\C}{\mathbb{C}}
\newcommand{\F}{\mathcal{F}}
\newcommand{\Lb}{\pazocal{L}}
\newcommand{\R}{\mathbb{R}}
\newcommand{\T}{\pazocal{T}}
\newcommand{\Tb}{\mathbf{T}}
\newcommand{\X}{\pazocal{X}}
\newcommand{\Z}{\mathbb{Z}}
\newcommand{\lm}{\lambda}
\newcommand{\sst}{\Delta_n}
\newcommand{\st}{\Delta_{\infty}}
\newcommand{\pr}[1]{\mathbb{P}\left [ #1 \right]}
\newcommand{\E}[1]{\mathbb{E}\left [ #1 \right ]}
\newcommand{\ind}[1]{\mathbf{1}_{\{#1\}}}
\newcommand{\G}[1]{\Gamma \left(#1 \right)}
\newcommand{\dint}[2]{\frac{1}{(2\pi \mathbf{i})^2} \oint \limits_{C_z #1} dz \oint \limits_{C_w #2} dw\,}
\title[Local statistics of RSN]{Random sorting networks: local statistics via random matrix laws}
\author{Vadim Gorin}
\address{Department of Mathematics,
MIT,  Cambridge, MA, USA and Institute for
Information Transmission Problems of Russian Academy of Sciences, Moscow, Russia.}
\email{vadicgor@gmail.com}
\author{Mustazee Rahman}
\address{Department of Mathematics,
MIT, Cambridge, MA, USA.}
\email{mustazee@gmail.com}
\date{}
\keywords{Sorting network, reduced decomposition, Gaudin-Mehta law, GUE corners, Young tableau, determinantal point process}
\begin{document}

\begin{abstract}
This paper finds the bulk local limit of the swap process of uniformly random
sorting networks. The limit object is defined through a deterministic procedure, a
local version of the Edelman-Greene algorithm, applied to a two dimensional
determinantal point process with explicit kernel. The latter describes the
asymptotic joint law near $0$ of the eigenvalues of the corners in the antisymmetric
Gaussian Unitary Ensemble. In particular, the limiting law of the first time a given
swap appears in a random sorting network is identified with the limiting
distribution of the closest to $0$ eigenvalue in the antisymmetric GUE. Moreover,
the asymptotic gap, in the bulk, between appearances of a given swap is the
Gaudin-Mehta law -- the limiting universal distribution for gaps between eigenvalues of
real symmetric random matrices.

The proofs rely on the determinantal structure and a double contour integral
representation for the kernel of random Poissonized Young tableaux of arbitrary shape.
\end{abstract}

\maketitle

\section{Introduction} \label{sec:intro}

\subsection{Overview}

The main object of this article is the uniformly random sorting network, as
introduced by Angel, Holroyd, Romik, and Vir\'{a}g in \cite{AHRV}. Let $\mathfrak{S}_n$ denote
the symmetric group and $\tau_i$ denote the transposition between $i$ and $i+1$ for
$1 \leq i \leq n-1$. The $\tau_i$ are called \emph{adjacent swaps}. Let
$\mathrm{rev} = n, n-1, \ldots, 1$ denote the reverse permutation of $\mathfrak{S}_n$.
A sorting network of $\mathfrak{S}_n$ is a sequence of permutations $\sigma_0 = \mathrm{id},
\sigma_1, \ldots, \sigma_N = \mathrm{rev}$ of shortest length with the property that for
every $k$, $$\sigma_{k+1} = \sigma_k \circ \tau_i \;\;\text{for some}\; i.$$
In other words, the permutations change by swapping adjacent labels at each step and must go from the
identity to the reverse in the shortest number of swaps. The number of adjacent
swaps required in any sorting network of $\mathfrak{S}_n$ is $\binom{n}{2}$. See Figure
\ref{fig:wiring} for an example of a sorting network in the wiring diagram
representation. We identify a sorting network of $\mathfrak{S}_n$ by its sequence of swaps
$$\Big(s_1,\,\ldots,\,s_{\binom{n}{2}} \Big),$$
where $s_i$ denotes the adjacent swap $(s_i, s_i+1)$.

A \emph{random sorting network} of $\mathfrak{S}_n$ is a sorting network of $\mathfrak{S}_n$ chosen
uniformly at random. Computer simulations were used to conjecture many beautiful asymptotic
properties of random sorting networks. See \cite{AHRV} for an account of these statements and the first
rigorous results, and also \cite{ADHV, AGH, AH, AHR, D_Virag, KV, RVV, Rozinov} for other
asymptotic theorems. The proofs of the conjectures from \cite{AHRV} were recently announced in \cite{D}.

\begin{figure}[t]
\begin{center}
\includegraphics[scale=0.4]{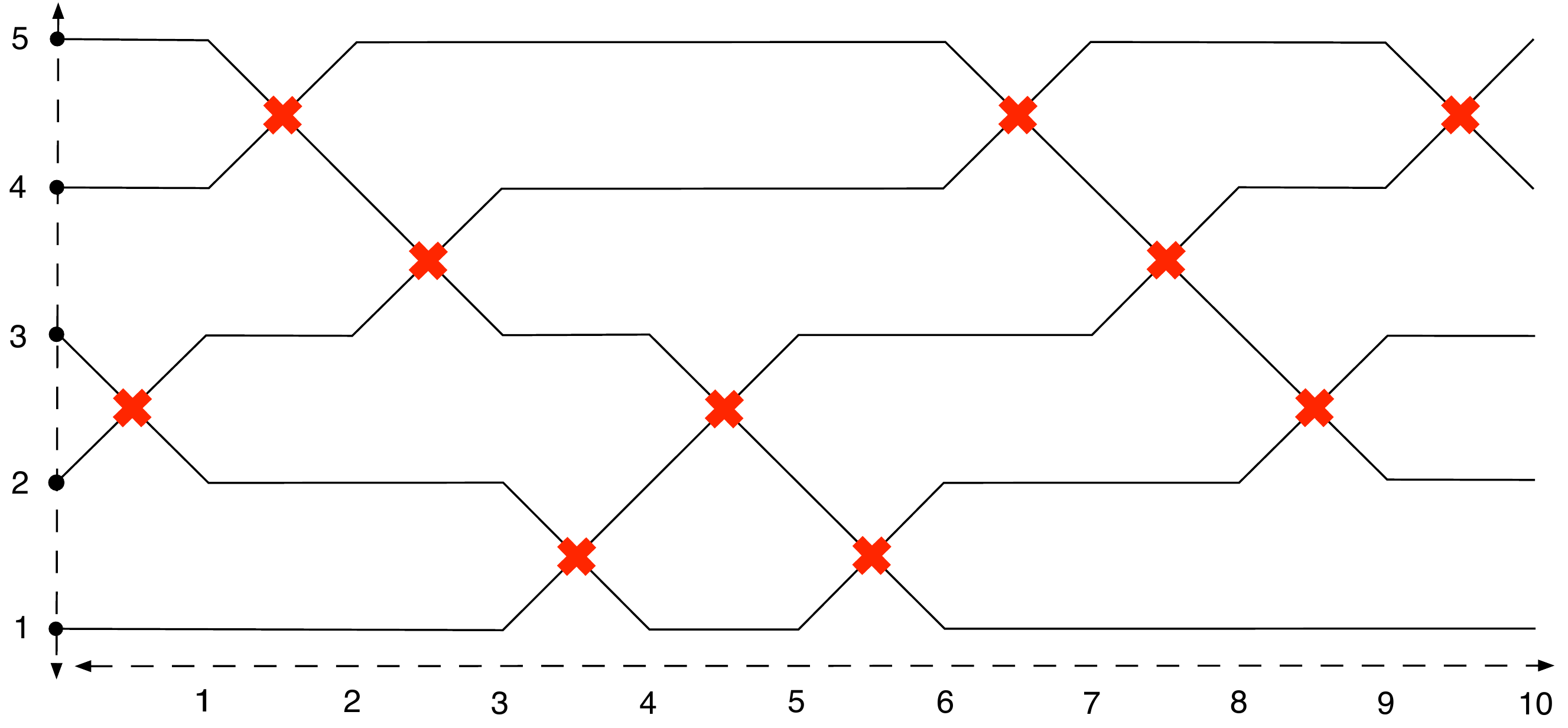}
\caption{\small{Wiring diagram of a sorting network of $\mathfrak{S}_5$ with swap sequence
(2,4,3,1,2,1,4,3,2,4). Intersection of two paths at location $(i-1/2, j+1/2)$
indicates a swap at time $i$ between labels at positions $j$ and $j+1$. The
intersection locations (red crosses) of a random sorting network of $\mathfrak{S}_n$ has a
distributional limit in windows of unit order in the vertical direction and order $n$ in the
horizontal direction.}} \label{fig:wiring}
\end{center}
\end{figure}

In many examples, random combinatorial structures built out of symmetric groups
are known to exhibit the same asymptotic behavior as random matrices. The most
famous result of this sort due to Baik--Deift--Johansson \cite{BDJ} identifies the
fluctuations of longest increasing subsequences of random permutations with the
fluctuations of largest eigenvalue of random Hermitian matrices. Its further
upgrades, \cite{BOO, Johansson_plancherel, Ok} link fluctuations of
several first rows of the Young diagram distributed according to the Plancherel
measure for symmetric groups to those of several largest eigenvalues. A connection
also exists for ``bulk'' (i.e.\ not largest) rows and eigenvalues, see \cite{BOO}.

On the other hand, up to now no such connections were known for random sorting
networks. In the present article we find such a connection. It exists for a sort of local
limit of random sorting networks. Indeed, we find the bulk local limit of random sorting
networks, by proving that it is given by a simple, local, deterministic algorithm
(the local Edelman-Greene algorithm) applied to a specific random point process on
$\Z \times \R_{\geq 0}$. In turn, we describe this point process by showing
that its correlation functions have determinantal form and provide explicit
expressions for the corresponding correlation kernel. The very same point process
appeared in the work of Forrester--Nordenstam \cite{FN} (see also Deffoseux
\cite{Def}) as the hard edge limit of antisymmetric GUE--corners process; it
describes the asymptotic distribution of the closest to 0 eigenvalues of the corners
of large antisymmetric matrix with i.i.d.\ (modulo symmetry) Gaussian entries of
mean $0$.

A corollary of our theorem is that the rescaled, asymptotic distribution of the first time when the
swap between $\lfloor \frac{n(1+\alpha)}{2} \rfloor$ and $\lfloor \frac{n(1+\alpha)}{2} \rfloor + 1$
appears, for $\alpha \in (-1,1)$, is the same as the rescaled, asymptotic distribution of the closest to $0$
eigenvalue of an antisymmetric-GUE random matrix. Another corollary is that within the bulk,
the asymptotic gap between appearances of the aforementioned swap is described by the
Gaudin--Mehta law --- the asymptotic universal distribution of the gap between eigenvalues of
real symmetric random matrices in the bulk. Complete statements are given in the next section.

In an independent and parallel work, Angel, Dauvergne, Holroyd, and Vir\'{a}g \cite{ADHV}
also study the bulk local limit of random sorting networks. Their approach is very
different from ours. We deduce explicit formulas for the prelimit local structure of
random sorting networks, and then analyze the asymptotic of these formulas in the
spirit of \emph{Integrable Probability}, see \cite{BG_review, BP_review} and also \cite{Ro}.
On the other hand, \cite{ADHV} argue probabilistically, analyzing a Markov chain (whose
transition probabilities are expressed through the hook formula for dimensions) for
sampling random Young tableaux. The connection to random matrices remains invisible
in the results of \cite{ADHV}. It would be interesting to match these two approaches,
but it has not been done so far.

\subsection{Bulk limit of random sorting networks.}

We now describe our main result. Informally, we study
the asymptotics of the point process $(s_i,i)$, $i=1,\dots,\binom{n}{2}$, in a window
of finite height and order $n$ width, so that the number of points in the window
remains finite; see Figure \ref{fig:wiring}. Here $(s_1,\dots,s_{\binom{n}{2}})$ are
swaps of a random sorting network of $\mathfrak{S}_n$.

In \cite{AHRV} it is proven that the point process $(s_i,i)$ is stationary with respect to
the second coordinate. Therefore, it suffices to study windows adjacent to $0$
in second coordinate, which we do.

The limiting object $S_{\rm{local}}$ is a point process on $\Z \times \R_{\geq 0}$
defined by a two--step procedure. First, we introduce an auxiliary point
process $\X_{\rm{edge}}$ on $\Z \times \R_{\geq 0}$ through its correlation functions.

\begin{definition} \label{def:Xedge}
$\X_{\rm{edge}}$ is the (unique) determinantal point process on $\Z \times \R_{\geq 0}$
with correlation kernel
 \begin{align*}
&K_{\rm{edge}}(x_1,u_1;x_2,u_2) =
\begin{cases}
\displaystyle \frac{2}{\pi}  \int \limits_0^1 t^{x_2-x_1}
\cos \left( tu_1 + \frac{\pi}{2} x_1\right) \cos \left( tu_2 + \frac{\pi}{2}x_2 \right )\,dt, & \text{if}\;\; x_2 \geq x_1; \\
& \\
\displaystyle - \frac{2}{\pi} \int \limits_1^{\infty} t^{x_2-x_1} \cos \left(
tu_1 + \frac{\pi}{2} x_1\right) \cos \left( tu_2 + \frac{\pi}{2}x_2 \right )\,dt, & \text{if}\;\; x_2 < x_1.
\end{cases}
\end{align*}
\end{definition}
We refer to \cite{Bor} and Section \ref{sec:dpp} for more detailed discussions of
determinantal point processes. We note that the particles of $\X_{\rm edge}$ on
adjacent lines $\{x\} \times \R_{\geq 0}$ and $\{x+1\}\times \R_{\geq 0}$ almost surely
interlace, see Figure \ref{fig:Xedge}. The point process $\X_{\rm{edge}}$ has appeared in
the random matrix literature before in \cite{FN}, \cite{Def}. In more details, let $G$ be an
infinite random matrix with rows and columns indexed by $\Z_{> 0}$, and whose entries are
independent and identically distributed, real-valued, standard Gaussians.
Let $A = \frac{G - G^{T}}{\sqrt{2}}$. The top--left $m\times m$ corner of
$A$ almost surely has $2 \lfloor m/2\rfloor$ non-zero eigenvalues of the form
$$\pm \mathbf i \lm_1^m, \, \pm \mathbf i \lm_2^m , \, \ldots, \, \pm \mathbf i \lm_{\lfloor m/2 \rfloor}^m,$$
where $0 < \lm_1^m < \lm_2^m < \cdots < \lm_{\lfloor m/2 \rfloor}^m$. Forrester and Nordenstam prove
that $\X_{\rm edge}$ is the weak limit of the point process
$\{(j,\sqrt{2M}\lm_i^{2M+j})\}\subset \mathbb Z\times \mathbb R_{\ge 0}$, $i\in
\mathbb Z_{> 0}$, $j\in \mathbb Z$, as $M \to \infty$.

Particle configurations of $S_{\rm{local}}$ are obtained from $\X_{\rm{edge}}$ by a
deterministic procedure, which is a local version of the well-known Edelman--Greene
bijection \cite{EG} between staircase shaped tableaux and sorting networks.
In the following we describe this procedure. A rigorous definition of the procedure
utilizes properties of $\X_{\rm{edge}}$ that are not immediate from Definition \ref{def:Xedge}.
We provide the rigorous construction in Section \ref{sec:localEG} where the description is given
in the language of Young tableau, which is the more standard setup for defining the Edelman-Greene
bijection.

\begin{figure}[htpb]
    \begin{center}
        \includegraphics[scale=0.55]{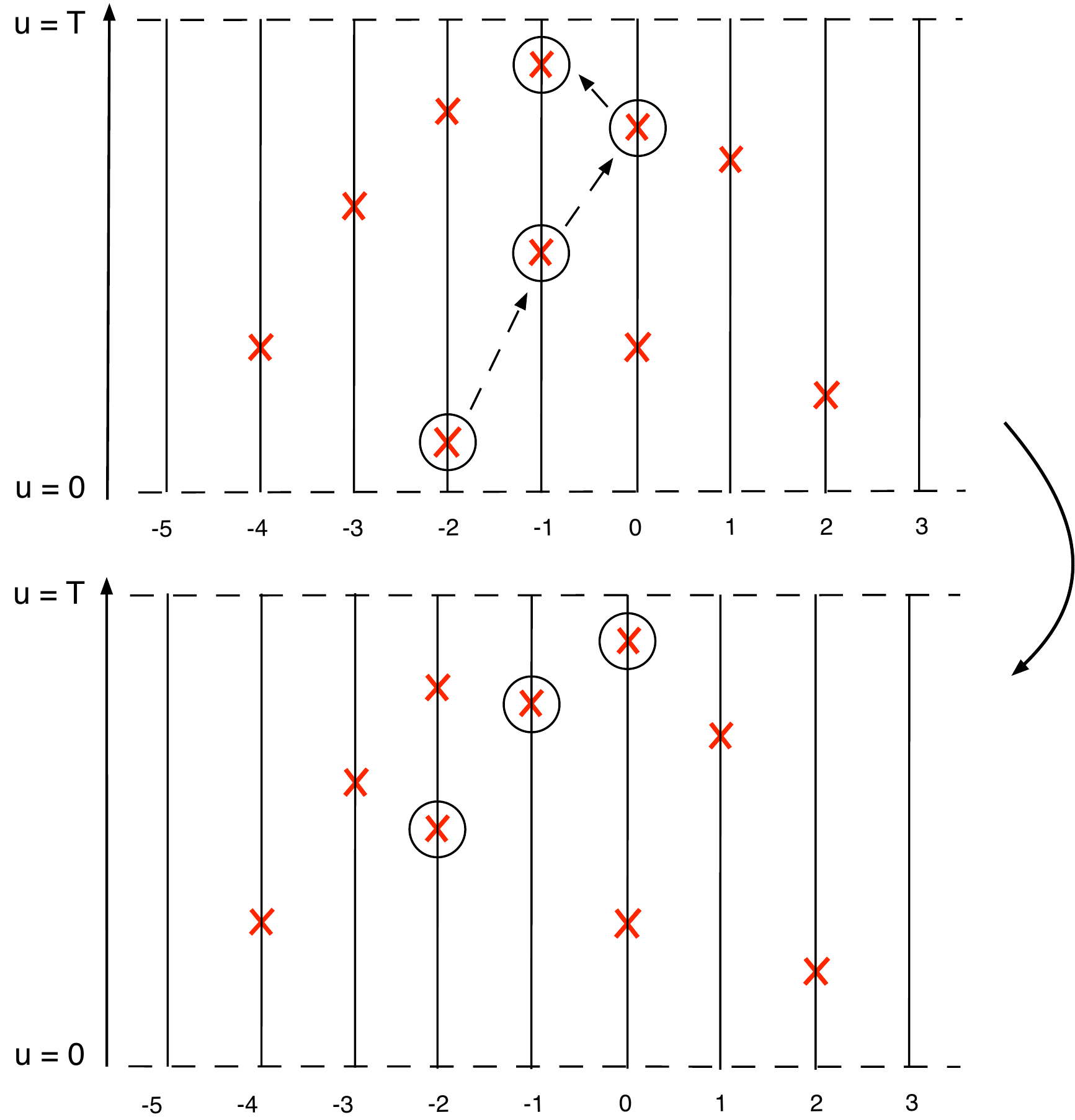}
        \caption{\small{Top: A possible configuration (red crosses) of $\X_{edge}$ restricted to
                $[-5,3] \times [0,T]$ with no particles on $\{-5,3\} \times [0,T]$. Particles on consecutive lines
                interlace. Encircled points represent the sliding path during the first step of the Edelman-Greene algorithm.
                Bottom: The result after the first step of the Edelman-Greene algorithm.}} \label{fig:Xedge}
    \end{center}
\end{figure}

\paragraph{\textbf{Local Edelman-Greene algorithm}}
Fix a configuration $X$ of $\X_{\rm edge}$ and suppose that we
want to define the positions of all particles of $S_{\rm{local}}$ inside the
rectangle $[a,b]\times [0,T]$ with $a<0<b$. Then almost surely there are two integers
$\hat a<2a$ and $\hat b>2b$ such that $X$ has no particles on the segments
$\hat a\times [0,T]$ and $\hat b\times [0,T]$. The particles of $X$ outside
$[\hat a,\hat b]\times[0,T]$ are further ignored.

We now define a particle configuration $Y$ -- the restriction of  $S_{\rm{local}}$
onto $[\hat a,\hat b]\times[0,T]$ -- through an iterative procedure. Start by declaring
$Y=\emptyset$, and setting $\hat X$ to be the restriction of $X$ onto
$[\hat a,\hat b]\times[0,T]$. Repeat the following until $\hat X$ is empty:

\begin{enumerate}
 \item \label{Step_1} Let $(x,u)$ be an element of $\hat X$ with smallest second coordinate.
 The parity of $x$ will be even. Add $(x/2,u)$ to $Y$, i.e., redefine $Y:=Y\cup\{(x/2,u)\}$.
 \item Define the \emph{sliding path} $(x_1,u_1), (x_2,u_2), \ldots$ as a unique collection of
 points in $\hat X$ (of maximal length) such that
  \begin{itemize}
   \item $(x_1,u_1)=(x,u)$,
   \item $u_1<u_2<\cdots<u_k$ and $|x_i-x_{i+1}|=1$ for $i=1,\dots,k-1$,
   \item For each $i=1,\dots,k-1$, the only points of $\hat X$ in the rectangle
   $[x_i-1,x_i+1]\times [u_i,u_{i+1}]$ are $(x_i,u_i)$ and $(x_{i+1},u_{i+1})$.
  \end{itemize}
  In other words, $(x_{i+1},u_{i+1})$ is the point in $[x_i-1,x_i+1] \times (u_i,T]$, which
  is closest to $(x_i,u_i)$. See Figure \ref{fig:Xedge} for an illustration.
 \item Remove the $k$ points $(x_1,u_1)$,\dots, $(x_{k},u_k)$ from $\hat X$ and
 replace them by $k-1$ points $(x_1,u_2)$, $(x_2,u_3)$, \dots, $(x_{k-1},u_k)$.
 \item Go back to Step \eqref{Step_1}, unless $\hat X$ is empty.
\end{enumerate}
The first coordinates of the particles of $Y$ will be integral; this follows from
the interlacing property of the particles of $\hat X$, which is preserved throughout
the steps of the procedure.

One immediate property of the just defined map $\X_{\rm{edge}}\mapsto
S_{\rm{local}}$ is that the position of the first particle of $S_{\rm{local}}$
in the ray $\{a\} \times \R_{\geq 0}$ almost surely coincides with the position
of the first particle of $\X_{\rm{edge}}$ in the ray $\{2a\} \times \R_{\geq 0}$.
Therefore, the joint law of the positions of the first particles of $S_{\rm{local}}$
in the rays $\{a_i\} \times \R_{\geq 0}$, for $i=1,\dots,k$, can be explicitly evaluated as a
Fredholm determinant. See Corollary \ref{cor:firstswap} for the case $k=1$ and
\cite{Bor} for general statements.

We also show that $S_{\rm{local}}$ is invariant under translations and reflections
of the first ($\Z$--valued) coordinate, ergodic with respect to translations of the
first coordinate, and stationary in the second ($\R_{\geq 0}$--valued)
coordinate; see Proposition \ref{thm:localsortingstats}.

We are ready to formulate the main result.
\begin{thm}[Local random sorting network] \label{thm:localsorting}
Fix $\alpha \in (-1,1)$, and let $s_1,s_2,\dots,s_{n\choose 2}$ be swaps of
a random sorting network of $\mathfrak{S}_n$. Define the point process $S_{\alpha,n}$ of
rescaled swaps near the point $(\frac{n(\alpha+1)}{2},0)$ through
$$
S_{\alpha,n}=\left\{\left(s_i-\left\lfloor \frac{n(\alpha+1)}{2} \right\rfloor ,
\sqrt{1-\alpha^2}\cdot \frac{2i}{n}\right)\right\}_{i=1}^{n\choose 2}.
$$
Then as $n\to\infty$, the point process $S_{\alpha,n}$ converges weakly to $S_{\rm local}$.
\end{thm}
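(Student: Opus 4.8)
The plan is to transport the problem to random Young tableaux via the Edelman--Greene bijection, extract from it a determinantal point process whose image under the local Edelman--Greene algorithm is the swap process, compute its correlation kernel as a double contour integral, run a steepest--descent analysis to identify the limiting kernel with $K_{\rm edge}$, and conclude with a continuous--mapping argument. Concretely, by the Edelman--Greene bijection \cite{EG} a uniformly random sorting network of $\mathfrak{S}_n$ corresponds to a uniformly random standard Young tableau $Q^{(n)}$ of the staircase shape $(n-1,n-2,\dots,1)$, and the swap process $\{(s_i,i)\}$ is a deterministic functional of $Q^{(n)}$, namely the Edelman--Greene reverse insertion: successive jeu-de-taquin-type slides against the fixed insertion tableau of the reverse permutation. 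After encoding (the growth history of) $Q^{(n)}$ as an interlacing particle array $\X^{(n)}$ on $\Z\times\Z_{\ge 0}$, this functional becomes the local Edelman--Greene algorithm of Section \ref{sec:localEG}; here one must also prove the underlying deterministic locality statement --- that the output of the algorithm inside a space--time window is unchanged by everything outside a slightly larger window, once that window is isolated by antidiagonals untouched by $Q^{(n)}$ up to the relevant time --- which is what makes the procedure well posed and turns $S_{\alpha,n}$ into $\Phi(\X^{(n)})$ for the same deterministic map $\Phi$ that produces $S_{\rm local}$ from $\X_{\rm edge}$ (on an event of probability tending to $1$).

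The next step is exact solvability. Via the non-intersecting lattice path representation of standard Young tableaux and the Lindström--Gessel--Viennot lemma together with the Eynard--Mehta theorem, a uniform standard Young tableau of an \emph{arbitrary} fixed shape $\lambda$ --- or rather its Poissonization, in which the deterministic entries are replaced by independent continuous clocks --- is a determinantal point process, and I would record its correlation kernel as a double contour integral $\frac{1}{(2\pi\mathbf i)^2}\oint\oint$ in which the shape data $\lambda$ enter the integrand through an explicit ratio of Gamma-type factors. Specializing $\lambda$ to the staircase yields an essentially explicit kernel $K^{(n)}$ for $\X^{(n)}$, and a de-Poissonization (Tauberian) argument then shows that the genuine, non-Poissonized local process has the same $n\to\infty$ limit, so it suffices to analyze $K^{(n)}$.

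Now rescale: shift the first coordinate by $\lfloor n(\alpha+1)/2\rfloor$ and the second by $\sqrt{1-\alpha^2}\cdot\frac{2}{n}$, which zooms to a bulk location $\alpha\in(-1,1)$ in the spatial variable and to the hard edge --- time near $0$ --- in the temporal variable. A steepest--descent analysis, deforming the contours $C_z,C_w$ through the relevant saddle points of the exponential part of the integrand, with the normalization $\sqrt{1-\alpha^2}$ emerging from the local curvature of the action there, should give $K^{(n)}\to K_{\rm edge}$ uniformly on compact subsets of $(\Z\times\R_{\ge 0})^2$, matching Definition \ref{def:Xedge}. Convergence of kernels, together with uniform local bounds on the correlation functions, upgrades to weak convergence $\X^{(n)}\to\X_{\rm edge}$ of the point processes; this is also consistent with the Forrester--Nordenstam description \cite{FN}.

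Finally I would establish that the local Edelman--Greene algorithm $\Phi$ is almost surely continuous at $\X_{\rm edge}$: almost surely $\X_{\rm edge}$ has particle-free vertical segments arbitrarily far out in any bounded time window (this uses ergodicity in the first coordinate, Proposition \ref{thm:localsortingstats}, and positivity of the probability that one segment is empty), almost surely every sliding path is finite and each ``closest point'' choice in the procedure is unique, and these features persist under small perturbations of a generic configuration; hence $\Phi$ is continuous on a set of full $\X_{\rm edge}$-measure. The continuous--mapping theorem applied to $\X^{(n)}\to\X_{\rm edge}$ then gives the desired weak convergence $S_{\alpha,n}=\Phi(\X^{(n)})\to\Phi(\X_{\rm edge})=S_{\rm local}$. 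The genuine obstacle, I expect, is the steepest--descent step: carrying out the double-saddle asymptotics uniformly while simultaneously sitting at a bulk point in one variable and at the hard edge in the other, including control of the contours and of the non-saddle and boundary contributions for all pairs of window coordinates at once; secondary difficulties are the purely deterministic locality statement of the first paragraph and the de-Poissonization.
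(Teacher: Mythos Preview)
Your outline is correct and matches the paper's strategy almost step for step: Edelman--Greene reduces to staircase tableaux, Poissonization yields a determinantal process with a double contour integral kernel, asymptotics of that kernel give $\X_{\rm edge}$, de-Poissonization transfers this to the SYT, and a continuity argument for the local Edelman--Greene map (using that $\X_{\rm edge}$ almost surely has empty vertical segments isolating finite clusters) finishes via Skorokhod plus continuous mapping. Two small corrections: the kernel asymptotics in the paper are not a saddle-point analysis but a direct application of Stirling's formula to the Gamma-function ratios in the integrand followed by dominated convergence (no contour deformation through critical points is needed, and the $\sqrt{1-\alpha^2}$ factor falls out of the ratio $\Gamma\big(\frac{n+c_n+z}{2}\big)\Gamma\big(\frac{n-c_n-z}{2}\big)$ rather than from local curvature); and the ergodicity you need is that of $\X_{\rm edge}$ (Proposition~\ref{prop:tail}), not of $S_{\rm local}$---citing Proposition~\ref{thm:localsortingstats} here is circular, since that result is deduced \emph{from} the ergodicity of $\X_{\rm edge}$ after the local EG map has already been constructed. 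The paper also derives the PYT kernel by a limit transition from Petrov's Gelfand--Tsetlin kernel rather than by running Eynard--Mehta directly, but either route works.
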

It is proven in \cite[Theorem 2]{AHRV} that the global scaling limit of the
space-time swap process of random sorting networks is the product of the semicircle
law and Lebesgue measure. The $\sqrt{1-\alpha^2}$ scaling of Theorem
\ref{thm:localsorting} is consistent with the semicircle result.

We emphasize that Theorem \ref{thm:localsorting} states both that $S_{\alpha,n}$
converges and the limit is obtained by applying the localized Edelman-Greene
algorithm to $\X_{\rm{edge}}$. Theorem \ref{thm:localsorting} does not cover the
case $|\alpha|=1$, where the asymptotic behavior changes. It is plausible that the
methods of the present article can be adapted to this remaining case, but we
not address it here; see \cite{Rozinov} for another approach to $|\alpha|=1$ case.

Theorem \ref{thm:localsorting} implies that the first
swap times in random sorting networks converge to a one--dimensional
marginal of $S_{\rm local}$; the distribution of the latter can be expressed as a
Fredholm determinant. Figure \ref{fig:firstswap} shows the approximate
sample distribution of the rescaled first swap time and \eqref{eqn:Dyson} shows the tail asymptotics.
\begin{cor}[First swap law]\label{cor:firstswap}.
Let $\Tb_{FS, \alpha, n}$ be the first time the swap interchanging $\left\lfloor
\frac{n(\alpha+1)}{2} \right\rfloor$ with $\left\lfloor \frac{n(\alpha+1)}{2}
\right\rfloor+1$ appears in a random sorting network of $\mathfrak{S}_n$. The following
convergence in law holds:
$$ \lim_{n\to\infty}  \, \frac{2 \sqrt{1-\alpha^2} }{n} \, \Tb_{FS, \alpha, n} = \Tb_{\rm{FS}},$$
where
\begin{align} \label{eqn:firstswaplaw}
\pr{ \Tb_{\rm{FS}} > t} &= 1 + \sum_{k=1}^{\infty} \frac{(-1)^k}{k!} \int \limits_{[0,t]^k} \det[ K_{\rm{edge}}(u_i,u_j)] \, du_1 \cdots du_k,
\;\text{and} \\
\nonumber K_{\rm{edge}}(u_1,u_2) &= \frac{\sin(u_1-u_2)}{\pi\,(u_1-u_2)} + \frac{\sin(u_1+u_2)}{\pi\,(u_1+u_2)}\,.
\end{align}
\end{cor}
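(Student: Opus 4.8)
The plan is to deduce Corollary \ref{cor:firstswap} directly from Theorem \ref{thm:localsorting} together with the structural properties of the local Edelman--Greene map described in the excerpt. First, I would observe that $\Tb_{FS,\alpha,n}$, after the rescaling $\frac{2\sqrt{1-\alpha^2}}{n}$, is precisely the position of the first particle of $S_{\alpha,n}$ on the ray $\{0\}\times\R_{\geq 0}$: indeed, by definition of $S_{\alpha,n}$, a point $(s_i-\lfloor n(\alpha+1)/2\rfloor, \sqrt{1-\alpha^2}\cdot 2i/n)$ lies on $\{0\}\times\R_{\geq 0}$ exactly when $s_i=\lfloor n(\alpha+1)/2\rfloor$, and its second coordinate is $\sqrt{1-\alpha^2}\cdot 2i/n$. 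The smallest such second coordinate is $\frac{2\sqrt{1-\alpha^2}}{n}\Tb_{FS,\alpha,n}$. By Theorem \ref{thm:localsorting}, $S_{\alpha,n}\Rightarrow S_{\rm local}$ weakly, and since the position of the first particle on a fixed ray is a continuous functional of the point configuration (off a null set of configurations — namely those with a particle exactly at the prescribed level, which has probability zero), the rescaled first swap time converges in law to the position $\Tb_{\rm FS}$ of the first particle of $S_{\rm local}$ on $\{0\}\times\R_{\geq 0}$.

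Next I would use the key observation quoted just before the statement: under the map $\X_{\rm edge}\mapsto S_{\rm local}$, the first particle of $S_{\rm local}$ on the ray $\{a\}\times\R_{\geq 0}$ coincides almost surely with the first particle of $\X_{\rm edge}$ on $\{2a\}\times\R_{\geq 0}$. Taking $a=0$, the law of $\Tb_{\rm FS}$ equals the law of the position of the first (closest to $0$) particle of $\X_{\rm edge}$ on the line $\{0\}\times\R_{\geq 0}$. Since $\X_{\rm edge}$ is a determinantal point process with kernel $K_{\rm edge}(x_1,u_1;x_2,u_2)$, its restriction to the line $x=0$ is itself a determinantal point process on $\R_{\geq 0}$ with kernel obtained by setting $x_1=x_2=0$; the gap probability $\pr{\Tb_{\rm FS}>t}$, i.e. the probability of no particles in $[0,t]$, is then the Fredholm determinant $\det(I-K_{\rm edge})_{L^2[0,t]}$, which is precisely the series displayed in \eqref{eqn:firstswaplaw}.

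It remains to compute the reduced kernel. Setting $x_1=x_2=0$ in the $x_2\geq x_1$ branch of Definition \ref{def:Xedge} gives
\[
K_{\rm edge}(0,u_1;0,u_2)=\frac{2}{\pi}\int_0^1 \cos(tu_1)\cos(tu_2)\,dt.
\]
Using $2\cos(tu_1)\cos(tu_2)=\cos(t(u_1-u_2))+\cos(t(u_1+u_2))$ and integrating, $\int_0^1\cos(ta)\,dt=\frac{\sin a}{a}$, yields exactly $\frac{\sin(u_1-u_2)}{\pi(u_1-u_2)}+\frac{\sin(u_1+u_2)}{\pi(u_1+u_2)}$, matching the claimed formula for $K_{\rm edge}(u_1,u_2)$. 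The main subtlety — the one point deserving genuine care rather than routine verification — is the passage from weak convergence of point processes to convergence in law of the first-particle location: one must argue that the relevant functional (first hitting level on a ray) is almost surely continuous with respect to the limiting measure $S_{\rm local}$, which reduces to showing $S_{\rm local}$ almost surely has no particle at a deterministic level and its particles on $\{0\}\times\R_{\geq 0}$ do not accumulate at $0$; both follow from the determinantal description of $\X_{\rm edge}$ (local trace-class control of $K_{\rm edge}$ near $0$) via the identification of the first particles above. The tail asymptotics \eqref{eqn:Dyson} alluded to after the statement would then follow from known asymptotics of this Fredholm determinant (the Bessel/sine-type kernel on a half-line), but that is outside the statement being proved here.
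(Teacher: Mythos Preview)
Your argument is correct but follows a different route than the paper's own proof. You deduce the corollary from Theorem~\ref{thm:localsorting} (the convergence $S_{\alpha,n}\Rightarrow S_{\rm local}$) together with the stated property of the local Edelman--Greene map that the first particle of $S_{\rm local}$ on $\{0\}\times\R_{\geq 0}$ coincides with the first particle of $\X_{\rm edge}$ on $\{0\}\times\R_{\geq 0}$. The paper instead proves the corollary \emph{before} establishing Theorem~\ref{thm:localsorting}, by going through the tableau limit directly: the Edelman--Greene bijection gives $\Tb_{FS,\alpha,n}=N+1-\Tb_{\sst}(n-s,s)$ for $s=\lfloor n(\alpha+1)/2\rfloor$, so the rescaled first swap time is (up to a factor tending to $1$) the entry $\Tb^{\rm rsc}_{\alpha,\sst}(0,0)$ in the embedded tableau, and Theorem~\ref{corr:LST} yields convergence to $\T_{\rm edge}(0,0)$. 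The event $\{\T_{\rm edge}(0,0)>t\}$ is then identified with the gap event for $\X_{\rm edge}$ on $\{0\}\times[0,t]$ via the map $M_{X\to T}$.

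The paper's route is logically lighter: it needs only the tableau local limit (Theorem~\ref{corr:LST}) and avoids invoking the continuity of the full local Edelman--Greene algorithm, which is the hardest part of Theorem~\ref{thm:localsorting}. Your route is perfectly valid once Theorem~\ref{thm:localsorting} is in hand, and indeed matches the informal discussion in the introduction; the only point you should make fully explicit is that $S_{\rm local}$ almost surely has at least one particle on $\{0\}\times\R_{\geq 0}$ (so that $\Tb_{\rm FS}<\infty$), which follows from your identification with $\X_{\rm edge}$ together with Lemma~\ref{lem:intensity}. Your kernel computation at $x_1=x_2=0$ is exactly the content of Proposition~\ref{prop:Kedge}(II).
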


\paragraph{\textbf{Connection to the Gaudin-Mehta law}}
A further consequence deals with the limiting law of the gap between swaps on the same horizontal line
in random sorting networks. Fix $\beta \in (0,1)$. Given a random sorting network of $\mathfrak{S}_n$,
let $\Tb_{+}$ be the distance between $ \left \lfloor \beta \binom{n}{2} \right \rfloor$ and the closest to its right
swap interchanging $\left\lfloor \frac{n(\alpha+1)}{2} \right\rfloor$ with $\left\lfloor \frac{n(\alpha+1)}{2} \right\rfloor+1$. 
Let $\Tb_{-}$ be the analogous distance to the closest to its left swap.

Due to stationarity of random sorting networks, the joint law $\Tb_{-}$ and $\Tb_{+}$ is given by
\begin{equation} \label{eqn:gaplaw}
\pr{\Tb_{-} > a, \Tb_{+} > b} = \pr{\Tb_{FS, \alpha, n} > a+ b}.
\end{equation}
Indeed, due to stationarity, both sides of \eqref{eqn:gaplaw} give the probability of the event
that there are no swaps in the interval $[-a,b]$ after the appropriate re-centerings.
Equation \ref{eqn:gaplaw} shows that the law of $(\Tb_{+}, \Tb_{-})$, and hence, of the gap $\Tb_{-} +
\Tb_{+}$, is determined by the law of the first swap time $\Tb_{FS, \alpha, n}$. In
particular, their limiting law after rescaling by $\sqrt{1-\alpha^2}/ n$ is uniquely
determined from the distribution function \eqref{eqn:firstswaplaw}.

This is connected to the scaling limit of the point process of eigenvalues of GOE random matrices
in the bulk. The scaling limit of the eigenvalues of GOE random matrices near $0$ is stationary.
Let $-\Lambda_{-}$ and $\Lambda_{+}$ be, respectively, the closest to $0$ negative and closest to
$0$ positive point in the limit process. If the matrices are scaled so that the mean eigenvalue
gap near $0$ is $1$, then \eqref{eqn:firstswaplaw} is the distribution function of
$(\pi/2)\Lambda_{+}$. In other words,  \eqref{eqn:firstswaplaw} is the asymptotic probability to
see no eigenvalues in an interval of length $(2/\pi)t$ for large GOE random matrices, normalized so
that the mean eigenvalue gap around the interval is $1$; see e.g.~\cite{Gaudin}, \cite{Dyson},
\cite[(8.139) and (9.81)]{Forrester}. The gap between points, $\Lambda_{-} + \Lambda_{+}$, has
its law determined from that of $\Lambda_{+}$ according to \eqref{eqn:gaplaw}. This is the
celebrated \emph{Gaudin--Mehta} law, originally put forward by Wigner as a model for the gap
between energy levels in heavy nuclei and later found in numerous systems.
We arrive at the following corollary.

\begin{cor}[Gap law] \label{cor:spacing_GM}
For $\alpha \in (-1,1)$ and $\beta \in (0,1)$, let
$\mathrm{Gap}_{\alpha,\beta,n}$ be the distance in a random sorting network of
$\mathfrak{S}_n$ between the two swaps interchanging $\left\lfloor
\frac{n(\alpha+1)}{2} \right\rfloor$ with $\left\lfloor \frac{n(\alpha+1)}{2}
\right\rfloor+1$: the one closest from the left to time $\beta \binom{n}{2}$
and the one closest from the right to $\beta \binom{n}{2}$.
Then, the distributional limit
$$ \lim_{n\to\infty}  \, \frac{4\sqrt{1-\alpha^2} }{\pi n} \, \mathrm{Gap}_{\alpha,\beta,n} $$
is the Gaudin--Mehta law, i.e.~the asymptotic gap in the bulk between eigenvalues of
real symmetric random matrices with mean gap one.
\end{cor}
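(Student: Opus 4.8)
The plan is to deduce this corollary entirely from Corollary \ref{cor:firstswap} together with the stationarity of the random sorting network in the time coordinate, exactly along the lines sketched in the paragraph preceding the statement. First I would record the combinatorial identity \eqref{eqn:gaplaw}: by the time-stationarity established in \cite{AHRV}, for fixed $n$ the pair $(\Tb_-,\Tb_+)$ measuring the distances from $\beta\binom{n}{2}$ to the nearest left and right occurrences of the swap interchanging $\lfloor \tfrac{n(\alpha+1)}{2}\rfloor$ with $\lfloor \tfrac{n(\alpha+1)}{2}\rfloor+1$ has the property that $\{\Tb_->a,\ \Tb_+>b\}$ is the event that no such swap occurs in a time-window of length $a+b$; after recentering, this has the same probability as $\{\Tb_{FS,\alpha,n}>a+b\}$. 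In particular $\mathrm{Gap}_{\alpha,\beta,n}=\Tb_-+\Tb_+$, so the law of the rescaled gap is a deterministic functional of the law of the rescaled first-swap time.

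Next I would pass to the limit. By Corollary \ref{cor:firstswap}, $\tfrac{2\sqrt{1-\alpha^2}}{n}\Tb_{FS,\alpha,n}$ converges in law to $\Tb_{\rm FS}$ with distribution function \eqref{eqn:firstswaplaw}. Applying \eqref{eqn:gaplaw} with $a,b$ replaced by $\tfrac{n}{2\sqrt{1-\alpha^2}}\,a$ and $\tfrac{n}{2\sqrt{1-\alpha^2}}\,b$ and letting $n\to\infty$, the rescaled pair $\tfrac{2\sqrt{1-\alpha^2}}{n}(\Tb_-,\Tb_+)$ converges jointly to a pair $(\Tb_-^\infty,\Tb_+^\infty)$ with $\pr{\Tb_-^\infty>a,\ \Tb_+^\infty>b}=\pr{\Tb_{\rm FS}>a+b}$; summing, $\tfrac{2\sqrt{1-\alpha^2}}{n}\,\mathrm{Gap}_{\alpha,\beta,n}$ converges in law to $\Tb_-^\infty+\Tb_+^\infty$. (A small technical point to check here is that convergence of the one-dimensional marginal $\Tb_{FS,\alpha,n}$ upgrades to joint convergence of $(\Tb_-,\Tb_+)$; this follows because the joint distribution function of the rescaled pair is, by \eqref{eqn:gaplaw}, a fixed continuous function of the single distribution function of the rescaled $\Tb_{FS,\alpha,n}$, so weak convergence of the latter forces weak convergence of the former.)

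Finally I would identify the limit with the Gaudin--Mehta law. The kernel $K_{\rm edge}(u_1,u_2)=\frac{\sin(u_1-u_2)}{\pi(u_1-u_2)}+\frac{\sin(u_1+u_2)}{\pi(u_1+u_2)}$ appearing in \eqref{eqn:firstswaplaw} is precisely the correlation kernel of the bulk scaling limit at $0$ of the eigenvalue process of GOE random matrices, normalized to mean gap $1$; thus the Fredholm determinant in \eqref{eqn:firstswaplaw} is the probability that this limiting GOE process has no eigenvalue in an interval of length $(2/\pi)t$, and $(\pi/2)\Lambda_+$ has distribution function \eqref{eqn:firstswaplaw}, where $-\Lambda_-$ and $\Lambda_+$ are the negative and positive points closest to $0$ in that process. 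By the stationarity of the GOE bulk limit, the same functional relation \eqref{eqn:gaplaw} relates the law of $\Lambda_-+\Lambda_+$ to that of $\Lambda_+$. Comparing the two functional relations, $\Tb_-^\infty+\Tb_+^\infty$ has the same law as $(\pi/2)(\Lambda_-+\Lambda_+)$, i.e.\ $\tfrac{4\sqrt{1-\alpha^2}}{\pi n}\,\mathrm{Gap}_{\alpha,\beta,n}$ converges in law to $\Lambda_-+\Lambda_+$, the Gaudin--Mehta law. The only genuine content beyond bookkeeping is the identification of $K_{\rm edge}$ as the GOE bulk kernel and the fact that the Gaudin--Mehta gap law is determined by the single-sided gap via \eqref{eqn:gaplaw}; both are classical and can be cited from \cite{Gaudin}, \cite{Dyson}, \cite{Forrester}. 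I expect the main (mild) obstacle to be the upgrade from marginal to joint convergence of $(\Tb_-,\Tb_+)$, which as noted above is handled by the explicit functional relation.
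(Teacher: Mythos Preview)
Your proposal is correct and follows essentially the same argument the paper gives in the paragraph preceding the corollary: use stationarity to obtain \eqref{eqn:gaplaw}, pass to the limit via Corollary~\ref{cor:firstswap}, and then identify the result with the Gaudin--Mehta law through the matching functional relation on the GOE side. One small wording caveat: the bulk GOE eigenvalue process is Pfaffian rather than determinantal, so $K_{\rm edge}$ is not literally its correlation kernel; what is true (and what the paper and your cited references assert) is that the GOE bulk \emph{gap probability} admits the Fredholm determinant representation \eqref{eqn:firstswaplaw}, which is all you need.
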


\begin{figure}[t]
\begin{center}
\includegraphics[scale=0.5]{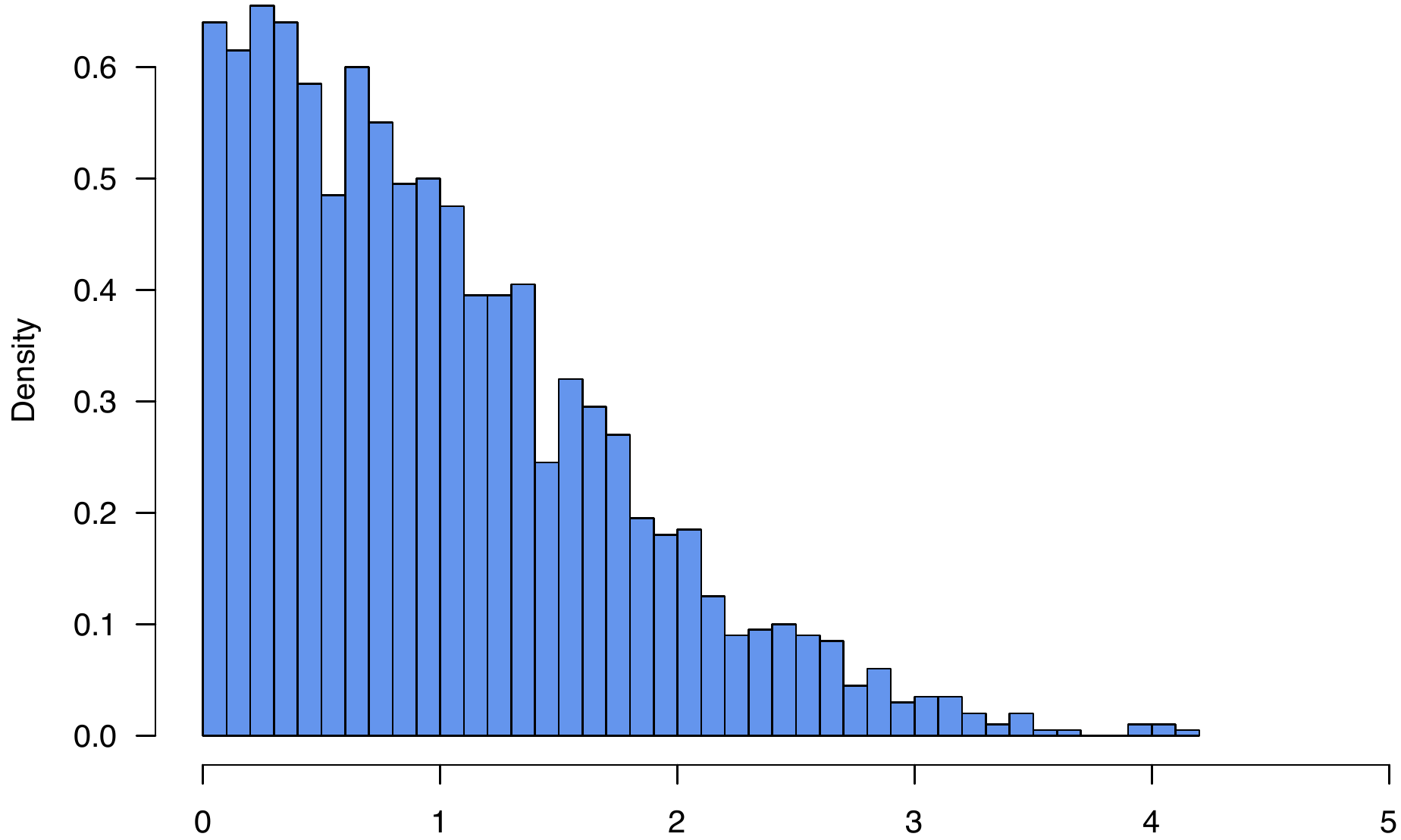}
\caption{\small{Density histogram of the rescaled first swap time
for element $500$ in a 1000 element random sorting network.}} \label{fig:firstswap}
\end{center}
\end{figure}

The proof of Theorem \ref{thm:localsorting} builds upon two ideas. The first one (which
is also used in most of the rigorous results on sorting networks) is to reduce
the study of random sorting networks to uniformly random staircase shaped
standard Young tableaux via the Edelman--Greene bijection \cite{EG} (see also \cite{HY}).
Our observation is that if we \emph{Poissonize} uniformly random standard Young
tableaux (of arbitrary shape!), then the result can be described by a determinantal
point process with an explicit correlation kernel written as a double contour
integral. We further show that the Poissonization does not change the local statistics,
and therefore, the limit theorem is reduced to the asymptotic analysis of the
aforementioned double contour integral, which we perform.

Our results on the correlations and limiting behavior of random standard Young tableaux
might be of independent interest, and so we present them in the next section.

\subsection{Random Standard Young Tableaux}

A partition $\lambda$ is a sequence of non-negative integers $\lambda_1 \geq
\lambda_2 \geq \cdots \geq 0$ such that $|\lambda|:=\sum_{i=1}^{\infty}\lambda_i<\infty$.
The \emph{length} of $\lambda$, denoted $\ell(\lambda)$, is the number of positive $\lm_i$
and the \emph{size} of $\lm$ is $|\lm|$.

We identify a partition with a \emph{Young diagram} (YD), which is the set of lattice points
$$\{ (i,j) \in \Z^2:\, i\ge 1,\, 1 \leq j \leq \lm_i\}.$$
The points of the Young diagram $\lm$ are its \emph{cells} and we say
the Young diagram has \emph{shape} $\lm$.
Given a pair of YDs $\lm$ and $\mu$, we write $\lm \preceq \mu$
if the cells of $\lm$ are contained within the cells of $\mu$. If the containment
is strict then $\lm \prec \mu$. If $\lm \preceq \mu$ then $\mu \setminus \lm$
denotes the cells of $\mu$ that are not in $\lm$. A \emph{standard Young tableau} (SYT)
of shape $\lm$ is an insertion of the numbers $1,2, \ldots, |\lm|$ into the cells of $\lm$
such that they strictly increase along the rows (from left to right) and also along the
columns (from bottom to top). The numbers within a SYT are its \emph{entries}.
The set of SYTs of shape $\lm$ is in bijection with the set of increasing sequences of YDs
\begin{equation} \label{eqn:YDbijection}
\emptyset = \lm^{(0)} \prec \lm^{(1)} \prec \lm^{(2)} \prec \cdots \prec
\lm^{(|\lm|)} = \lm
\end{equation}
such that the entry $k$ is inserted into the singleton cell of $\lm^{(k)} \setminus
\lm^{(k-1)}$.

A staircase shaped SYT of length $n-1$ (or also $n-1$ rows) is a SYT of shape
$(n-1,n-2,\ldots,2,1)$, which we denote $\sst$.
The Edelman--Greene bijection \cite{EG} gives a one-to-one correspondence
between staircase shaped SYTs and sorting networks; see Section \ref{sec:EG}
for the details. This is the reason for our interest in SYTs.

A Poissonized Young tableau (PYT) of shape $\lambda$ is an insertion of distinct
real numbers from the interval $(0,1)$ into the cells of $\lm$ such that they
strictly increase along the rows and  along the columns. Note that if we replace the
entries of a PYT by their relative ranks then we get a SYT. The set of PYTs of
shape $\lm$ is in bijection with the set of increasing sequences of YDs indicating the times of jumps:
\begin{equation} \label{eqn:PYDbijection}
\emptyset = \lm^{(0)} \stackrel{t_1}{\prec} \lm^{(1)} \stackrel{t_2}{\prec}
\lm^{(2)} \stackrel{t_{3}}{\prec} \cdots \stackrel{t_{|\lm|}}{\prec} \lm^{(|\lm|)} = \lm
\end{equation}
such that the entry $t_k$ is inserted in the singleton cell of $\lm^{(k)} \setminus
\lm^{(k-1)}$. These increasing sequences of Young diagrams
with labels were discussed in \cite{BO_YB} in the connection to the \emph{Young
bouquet}; see also \cite{Nica}.

We would like to identify a PYT with a collection of non-intersecting
paths. For that we first map a Young diagram $\lambda$ to a countable particle
configuration $\{\lambda_i-i+1/2\}_{i=1,2,\dots}\subset \mathbb Z+1/2$. This
procedure can be viewed as projecting the boundary of the Young diagram in
\emph{Russian notation} onto a horizontal line, see Figure \ref{fig:YD_projection}.
The empty Young diagram $\emptyset$ corresponds to $\{-1/2,-3/2,-5/2,\dots\}$.

\begin{figure} [htpb]
\begin{center}
\includegraphics[scale=0.8]{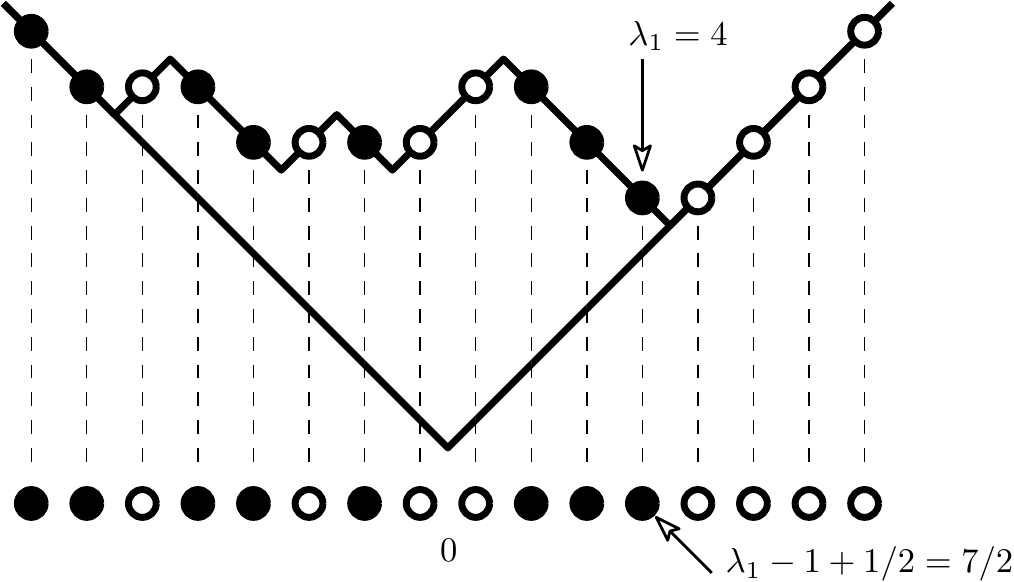}
\caption{\small{Young diagram $(4,4,4,2,1,1)$ and corresponding particle configuration
$(7/2, 5/2, 3/2, -3/2, -7/2, -9/2, -13/2, -15/2,\dots)$.}} \label{fig:YD_projection}
\end{center}
\end{figure}

Give a PYT, for each $t$ consider the countable particle configuration corresponding
to the Young diagram filled with the entries $\leq t$ in the PYT. The trajectories of
particles then form a collection of paths, making jumps to the right at the times
indexed by the entries $t_k$ of the tableau (equivalently, labels in
\eqref{eqn:PYDbijection}). Let us draw a cross at a point $(x,t)$, $x\in\mathbb Z$,
$0<t<1$, if a particle jumps from $(x-1/2)$ to $(x+1/2)$ at time $t$; see Figure
\ref{fig:interlace}. Although there are infinitely many particles, the only ones that move
are the $\ell(\lm)$ particles that correspond to the rows of $\lm$ with positive size.

\begin{figure}[t]
\begin{center}
\includegraphics[scale=0.7]{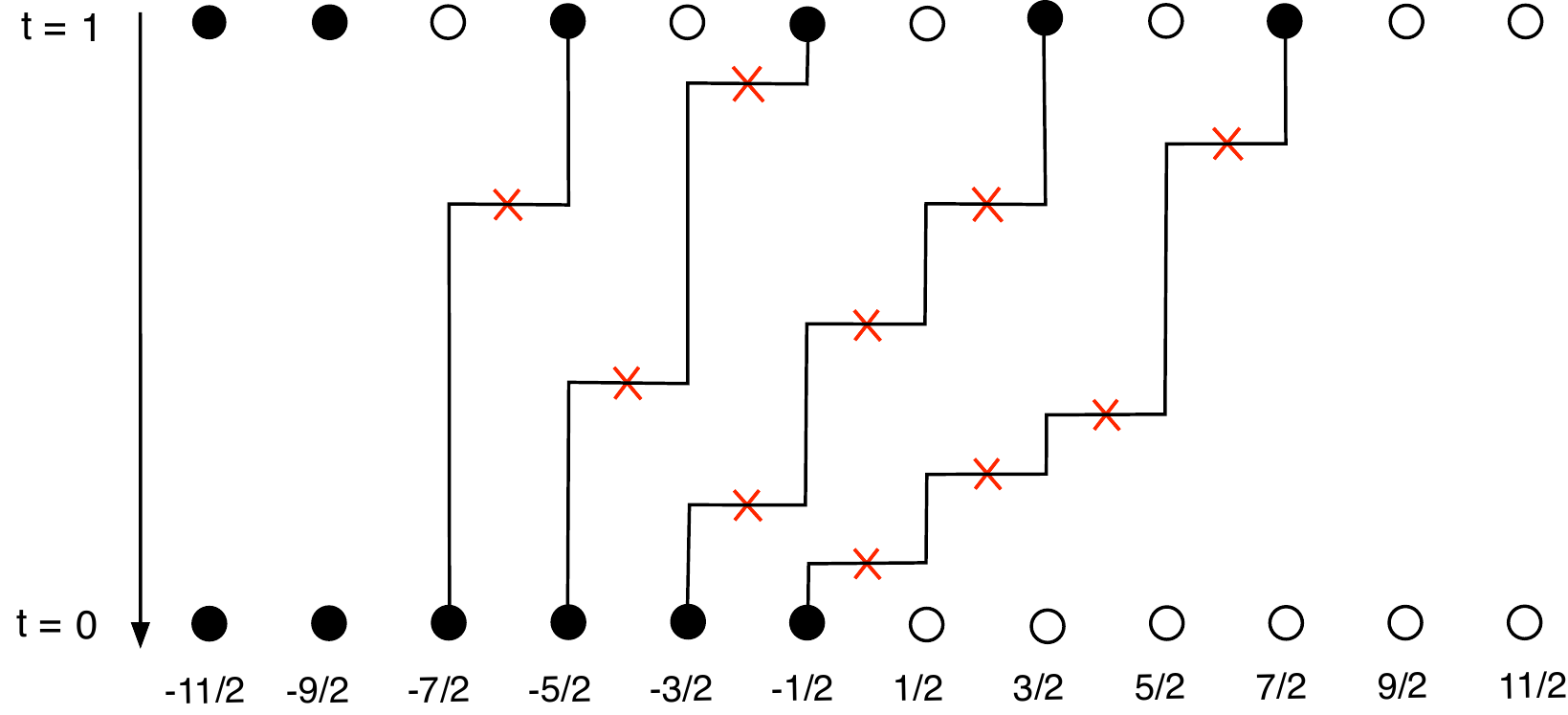}
\caption{\small{Particle system associated to a staircase shaped PYT of size 10.
Particles move along non-intersecting paths. The jumps (red crosses) of a random
PYT of fixed shape is a determinantal point process. The local window of a PYT
of shape $\sst$ consists of the trajectories of a group of $L$ successive particles
traced down from $t=1$ to $t = 1-(u/n)$.}} \label{fig:interlace}
\end{center}
\end{figure}

\begin{thm}[Poissonized tableaux]  \label{thm:pyt}
Given a finite Young diagram $\lambda$, consider the point process
$\X_{\lm}$ of jumps of a uniformly random Poissonized Young tableau of
shape $\lambda$. $\X_{\lm}$ is a determinantal point process on $\Z \times [0,1]$
with correlation kernel $K_{\lm}(x_1,t_1; x_2,t_2)$ as follows.
For $x_1,x_2 \in \Z$ and $t_1, t_2 \in [0,1]$,
\begin{align*}
&K_{\lm}(x_1,t_1; x_2;t_2) = \ind{t_2 > t_1,\, x_1 > x_2}\,\frac{(t_1-t_2)^{x_1-x_2-1}}{(x_1-x_2-1)!} \,+\\
& \dint{[0,\lm_1-x_2)}{[0,n+x_1)} \frac{\G{-w} }{\G{z+1}} \cdot
\frac{G_{\lm}(z+x_2)}{G_{\lm}(x_1-1-w)}  \cdot \frac{(1-t_2)^z (1-t_1)^w}{w+z+x_2-x_1+1},\\
& \text{where}\;\; G_{\lm}(u) = \G{u+1} \prod_{i=1}^{\infty}
\frac{u+i}{u-\lambda_i+i}= \frac{\G{u+1+n}}{\prod_{i=1}^n (u-\lm_i +i)}, \quad n\ge
\ell(\lambda).
\end{align*}
The contours $C_z[0,\lm_1-x_2)$ and $C_w[0,n+x_1)$ are as shown in Figure
\ref{fig:contour1}. Both are counter-clockwise, encloses only the integers in the
respective half open intervals $[0,\lm_1-x_2)$ and $[0,n+x_1)$, and arranged such
that $w+z+x_2-x_1+1$ remains uniformly bounded away from 0 along the contours.
\end{thm}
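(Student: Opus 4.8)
The plan is to realize the uniformly random PYT of shape $\lambda$ as a suitable degeneration of a Schur-process-type measure on sequences of interlacing particle configurations, and then to read off the correlation kernel via the standard (Borodin--Rains / Okounkov--Reshetikhin, or Eynard--Mehta) machinery, finally converting the resulting sum over residues into the stated double contour integral. First I would record that a PYT of shape $\lambda$ is, by \eqref{eqn:PYDbijection}, an increasing chain $\emptyset = \lambda^{(0)} \prec \cdots \prec \lambda^{(|\lambda|)} = \lambda$ together with i.i.d.\ uniform jump times $t_1 < \cdots < t_{|\lambda|}$ in $(0,1)$; the chain of Young diagrams carries the uniform measure on SYTs of shape $\lambda$, i.e.\ weight $1/f^\lambda$ each. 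Passing to the particle picture $\lambda \mapsto \{\lambda_i - i + 1/2\}$, each $\lambda^{(k)}\setminus\lambda^{(k-1)}$ is a single right-jump of one particle, and crossing a cell $(i,j)$ means the $i$-th particle passes through $x = j - i$. The key structural point is that the time-$t$ slice of the process, for $t$ ranging over $(0,1)$, together with the terminal condition $\lambda^{(|\lambda|)}=\lambda$, is a Markov chain on interlacing arrays whose transition and boundary weights are precisely those of a Poissonized Plancherel-type / Schur measure specialized so that the terminal shape is pinned to $\lambda$.

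Next I would diagonalize. The cleanest route is to use the fact (going back to \cite{BO_YB} in exactly this ``Young bouquet'' setting, and to the general theory of \cite{Bor}) that for a fixed terminal shape the jump process is an $L$-ensemble / determinantal point process whose kernel is computed by an Eynard--Mehta type formula: one needs the transition kernel of the underlying particle dynamics (here: independent Poisson right-jumps, whose generating function on the circle is $t\mapsto t^{x}$, producing the factors $(1-t_i)^{w}$, $(1-t_2)^{z}$ and the $1/(w+z+x_2-x_1+1)$ kernel of the free part) and the ``normalization'' coming from pinning the last configuration to $\lambda$, which is what produces the ratio $G_\lambda(z+x_2)/G_\lambda(x_1-1-w)$ together with the $\Gamma(-w)/\Gamma(z+1)$ prefactor. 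Concretely, I expect $G_\lambda(u)$ to arise as the (regularized) generating function $\prod_i \frac{u+i}{u-\lambda_i+i}$ of the shape $\lambda$ evaluated at the particle-transform variable, exactly as in the hook-content / principal-specialization formulas for $s_\lambda$; the two displayed identities for $G_\lambda$ are then just the telescoping of the infinite product against $\Gamma(u+1)$. The indicator term $\ind{t_2>t_1,\,x_1>x_2}\frac{(t_1-t_2)^{x_1-x_2-1}}{(x_1-x_2-1)!}$ is the free-propagator correction term standard in such kernels (the heat-kernel-like contribution for the independent jump process between times $t_1$ and $t_2$), and I would verify it by a direct residue computation: closing the $z$-contour or the $w$-contour and collecting the pole at $w+z+x_2-x_1+1 = 0$.

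The main obstacle, and where I would spend most of the effort, is justifying the contour-integral form rigorously: showing that the infinite product $\prod_{i=1}^\infty \frac{u+i}{u-\lambda_i+i}$ converges and defines $G_\lambda$ as a meromorphic function with the stated poles/zeros, controlling the behaviour of $\Gamma(-w)/\Gamma(z+1)\cdot G_\lambda(z+x_2)/G_\lambda(x_1-1-w)$ on the contours, and checking that the contours $C_z[0,\lambda_1-x_2)$ and $C_w[0,n+x_1)$ of Figure~\ref{fig:contour1} can genuinely be drawn so that (a) they enclose exactly the asserted integers (these are the points where $\Gamma$-factors or $G_\lambda$ contribute relevant residues, matching the finitely many particles/columns that actually move) and (b) $w+z+x_2-x_1+1$ stays bounded away from $0$, so the integral is absolutely convergent and the residue manipulations are legitimate. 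A secondary technical point is the passage from the discrete Eynard--Mehta sum (a sum over the finitely many moving particles, with the Gram-type matrix inverted) to the integral: this is a contour-integral resummation that I would do by recognizing the inverse of the relevant matrix in terms of interpolation against $G_\lambda$, as in \cite{Bor}'s treatment of determinantal processes arising from fixed Young diagrams. Once the formula is established for the ``bulk'' term, the full kernel follows by adding back the propagator term, and uniqueness of the determinantal process with a given kernel (again \cite{Bor}) finishes the proof.
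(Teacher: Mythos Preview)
Your proposal takes a genuinely different route from the paper's proof. The paper does \emph{not} apply Eynard--Mehta or Schur-process machinery directly to the continuous-time PYT. Instead, it obtains $K_\lambda$ by a \emph{limit transition} from a known discrete kernel: for each $M\ge n$ one considers the uniformly random semistandard Young tableau of shape $\lambda_M=(\lambda_1,\dots,\lambda_n,0,\dots,0)$ with $M-n$ trailing zeroes, which under the GTP bijection is a uniformly random discrete interlacing array with fixed top row. Petrov's theorem (quoted here as Theorem~\ref{thm:petrov}) already gives the kernel $K_M$ for the jumps of that array as an explicit double contour integral. The paper then shows that the rescaled process converges weakly to $\X_\lambda$ as $M\to\infty$, and that $(M-1)^{x_2-x_1+1}K_M\to K_\lambda$ uniformly on compacta. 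The analytic work is: (i) a contour deformation that peels off the residue at $w=z$ and produces a $z$-integral evaluated in closed form by a hypergeometric identity (Lemma~\ref{lem:hypergeometric}), yielding precisely the indicator term; and (ii) a Stirling-type computation (Lemma~\ref{lem:Jasymptotic}) that turns the finite products coming from $\nu_\lambda^{(M)}$ into the $G_\lambda$-ratio and the $(1-t_i)^{\cdot}$ factors. No Gram matrix ever needs to be inverted; Petrov has already done that in the discrete model.

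Your direct approach is plausible in outline, and indeed the paper remarks that the limit can be viewed as a degeneration from $U(N)$-type to $\mathfrak{S}_n$-type combinatorics in the spirit of \cite{BO_YB}. But the step you flag as ``a contour-integral resummation \dots recognizing the inverse of the relevant matrix in terms of interpolation against $G_\lambda$'' is exactly the hard part, and you have not said how to do it: for a process pinned at a fixed terminal shape the Eynard--Mehta Gram matrix is a finite but nontrivial object, and getting the clean double contour integral with the $1/(w+z+x_2-x_1+1)$ Cauchy kernel out of its inverse requires a concrete biorthogonalization, not just pattern-matching against the expected answer. Your description of the underlying dynamics as ``independent Poisson right-jumps'' is also loose; the PYT is a \emph{conditioned} process, and one must either set up the Karlin--McGregor conditioning carefully or pass to an unconditioned model via an $h$-transform before invoking free propagators. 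The paper's route avoids all of this by borrowing the finished product from \cite{Petrov} and doing only asymptotics.
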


\begin{rem}
When $t_1$ or $t_2$ equals 1, $K_{\lm}$ is to be understood in the limit as
$t_1$ or $t_2$ tends to 1. The contours $C_z[0,\lm_1-x_2)$ and $C_w[0,n-x_2)$
may also be replaced by unbounded contours $C_z[0,\infty)$ and $C_w[0,\infty)$
with bounded imaginary parts, respectively.
\end{rem}

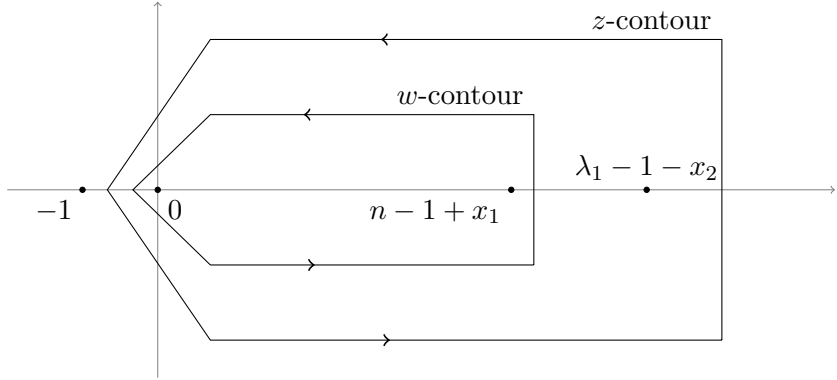
\begin{figure}
\begin{center}
\begin{tikzpicture}
\draw[help lines,->] (-2,0) -- (9,0) coordinate (xaxis); \draw[help lines,->]
(0,-2.5) -- (0,2.5) coordinate (yaxis);

\draw [decoration={markings, mark=at position 4.5cm with {\arrow[line
width=0.75pt]{<}}, mark=at position 14cm with {\arrow[line width=0.75pt]{<}}},
postaction={decorate}] (7.5,-2) -- (0.7,-2) -- (-0.67,0) -- (0.7,2) -- (7.5,2)
node[above left] {$z$-contour} -- (7.5,-2);

\draw [decoration={markings, mark=at position 3cm with {\arrow[line
width=0.75pt]{<}}, mark=at position 8.5cm with {\arrow[line width=0.75pt]{<}}},
postaction={decorate}] (5,-1) -- (0.7,-1) -- (-0.33,0) -- (0.7,1) -- (5,1)
node[above left] {$w$-contour} -- (5,-1);

\draw [black,fill=black] (0,0) circle (1pt) node[below right] {$0$}; \draw
[black,fill=black] (-1,0) circle (1pt) node[below left] {$-1$}; \draw
[black,fill=black] (4.7,0) circle (1pt) node[below left] {$n-1+x_1$}; \draw
[black,fill=black] (6.5,0) circle (1pt) node[above] {$\lambda_1-1-x_2$};
\end{tikzpicture}
\caption{\small{The contours in the statement of Theorem \ref{thm:pyt}.}}
\label{fig:contour1}
\end{center}
\end{figure}

The proof of Theorem \ref{thm:pyt} is through a limit transition in the
correlation kernel of \cite{Petrov} (see also \cite{DM}) for the uniformly random
Gelfand--Tsetlin patterns; the proof is in Section \ref{sec:poissonization}.
Such a limit transition can be viewed as a degeneration of the combinatorial
structures related to the representation theory of the unitary groups $U(N)$ to
those related to the symmetric groups $\mathfrak{S}_n$, see \cite{BO_YB} for a discussion.

Let us emphasize that the very same procedure can be used to identify a uniformly random SYT with
a point process of jumps, however, the resulting process will \emph{not} be
determinantal -- this is why we need to pass from SYTs to PYTs.

\subsection{From Poissonized tableaux to local statistics} \label{sec:introlst}

We close the introduction with an outline of the argument that takes us from Poissonized
tableaux to local statistics of sorting networks. By the nature of the Edelman-Greene
bijection, the swaps of a sorting network of $\mathfrak{S}_n$ near time 0 are determined by
the location of the largest entries of an SYT of shape $\sst$. These entries reside within
unit order distance of the \emph{edge} of $\sst$, which consist of the cells $(i,n-i)$
for $1 \leq i \leq n-1$. As a result, the first step to deriving
local statistics of random sorting networks is to derive the statistics of a uniformly
random PYT of shape $\sst$ near its edge.

Let $\T_{\sst}$ denote a uniformly random PYT of shape $\sst$. We are interested in the statistics
of the entries of $\T_{\sst}$ that lie within the following windows. A window is parameterized by a
center $\alpha \in (-1,1)$ (corresponding to the center $\frac{(1+\alpha)n}{2}$ of the swaps of a
sorting network), a length $L$, and an entry height $u$. The window then consist of entries
$\T_{\sst}(i,j)$ that satisfy $|i-(1+\alpha)n/2| \leq L$ and $\T_{\sst}(i,j) \geq 1 - \frac{u}{n}$.
In other words, roughly the largest $n u$ entries of $\T_{\sst}$ with row indices in the interval
$[(1+\alpha)n/2-L, (1+\alpha)n/2+L]$.

We study the statistics of $\T_{\sst}$ in a window in terms of its associated process
of jumps $\X_{\sst}$, rescaled accordingly. For each integer $n \geq 1$, let $c_n$ be an
integer having the same parity as $n$ and such that $|c_n - \alpha n| = O(1)$ as $n \to \infty$.
Consider the rescaled process of jumps
\begin{equation} \label{eqn:edgeprocess}
\X_{\alpha, n} = \left \{ (x,u) \in \Z \times \R_{\geq 0}: \Big (\, x + c_n, 1 - \frac{u}{n \sqrt{1-\alpha^2}} \,\Big) \in \X_{\sst} \right \}.
\end{equation}
Theorem \ref{thm:main} and Proposition \ref{prop:simplekernel} together imply that $\X_{\alpha,n}$
converges weakly to the point process $\X_{\rm{edge}}$ from Definition \ref{def:Xedge}.
It is the building block for the proof of Theorem \ref{thm:localsorting}.

In Section \ref{sec:localtab} we construct the local staircase shaped tableau $\T_{\rm{edge}}$
by using the local jump process $\X_{\rm{edge}}$. We prove in Theorem \ref{thm:tableaulocallimit}
that it provides the limiting statistics of $\T_{\sst}$ in local windows.
Using a de-Poissonization argument we conclude in Theorem \ref{corr:LST}
that uniformly random staircase shaped SYTs also converge within local windows
to $\T_{\rm{edge}}$. Although Poissonization is not important for the local limit,
it is important for the proof.

In Section \ref{sec:sorting} we prove Theorem \ref{thm:localsorting}.
First, we give a proof of Corollary \ref{cor:firstswap} in Section \ref{sec:firstswap}.
In Section \ref{sec:localEG} we define the local version of the Edelman-Greene algorithm
that maps $\T_{\rm{edge}}$ to $S_{\rm{local}}$. In Section \ref{sec:localsorting}
we complete the proof of Theorem \ref{thm:localsorting} and conclude with some
statistical properties of $S_{\rm{local}}$.

\subsection*{Acknowledgements} We would like to thank Balint Vir\'{a}g, who brought
our attention to the problem of identifying the local limit of sorting networks. We are
grateful to Alexei Borodin, Percy Deift, and Igor Krasovsky for valuable discussions
and references. We would also like to thank an anonymous referee for an exceptionally
careful reading of the paper and some helpful feedback.

V.~Gorin's research was partially supported by NSF grants DMS-1407562, DMS-1664619, by a Sloan
Research Fellowship, by The Foundation Sciences Math\'{e}matiques de Paris, and by NEC
Corporation Fund for Research in Computers and Communications.
M.~Rahman's research was partially supported by an NSERC PDF award.

\section{Preliminaries} \label{sec:prem}

This section presents basic facts about Young tableaux, Poissonization and determinantal point processes.
Some material from the Introduction is repeated for convenience.

\subsection{Gelfand-Tsetlin patterns} \label{sec:gtp}

A \emph{semi-standard Young tableau} of shape $\lm = (\lm_1, \ldots, \lm_M)$, where
$\lm_1 \geq \lm_2 \geq \cdots \geq \lm_M \geq 0$ are integers, is an insertion of
numbers from $\{1,\ldots, M\}$ into the cells of the YD $\lm$ such that the entries
weakly increase along each row and strictly increase along every column. It is
important to emphasize that while for the definitions of Young diagrams and standard
Young tableaux the value of $M$ is not important, here the object
substantially depends on $M$. Semi-standard Young tableaux (SSYTs) are in bijection
with interlacing particle systems, often known as \emph{Gelfand-Tsetlin patterns}
(or \emph{schemes}).

A Gelfand-Tsetlin pattern (GTP) with $M$ rows is a triangular array of non-negative
integers $[a(i,j)]$ with row $i$ containing $i$ entries $a(i,1), \ldots, a(i,i)$.
The array satisfies the following order and interlacing constraints.
\begin{equation*}
\text{Order \& Interlace}: \quad  a(i,j) \geq a(i-1,j) \geq a(i,j+1) \;\;\text{for every}\; i \;\text{and} \; j.
\end{equation*}
Let $a^{(i)} = (a(i,1), \ldots, a(i,i))$ denote the $i$-th row of the GTP. Each row
corresponds to a YD due to the order constraints. The interlacing conditions ensure
that $a^{(i-1)} \preceq a^{(i)}$, and in fact, $a^{(i)} \setminus a^{(i-1)}$ is a
horizontal strip which means that the cells in any row of $a^{(i)} \setminus
a^{(i-1)}$ are to the left of the cells in the previous row. Figure \ref{fig:GTP} provides an example.
\begin{figure}[htpb]
\begin{center}
$$ \setlength{\arraycolsep}{1pt}
    \begin{array}{ccccccc}
       9 &   & 5 &   & 3  &   &  2 \\
         & 8 &   & 5 &    & 2 &    \\
         &   & 6 &   &  3 &   &    \\
         &   &   & 5 &    &   &    \\
    \end{array}$$
\caption{\small{A Gelfand-Tsetlin pattern with 4 rows.}} \label{fig:GTP}
\end{center}
\end{figure}

The set of GTPs with a fixed top row $a^{(M)}$ is in bijection with the set of SSYTs
of shape $\lm = a^{(M)}$. Indeed, given a GTP $[a(i,j)]$ with top row $\lm$, such a
tableaux is obtained by inserting the value $i$ into the cells of $a^{(i)} \setminus
a^{(i-1)}$ for every $1 \leq i \leq M$ (set $a^{(0)} = \emptyset$). If $a^{(i)} \setminus a^{(i-1)}$
is empty then $i$ is not inserted. In the reverse direction, given a SSYT of shape $\lm$,
a GTP with top row $\lm$ is obtained by setting $a^{(i)}$ to be the YD consisting of the cells
of $\lm$ with entries $\leq i$ and removing trailing zero rows to ensure that $a^{(i)}$ has $i$ entries.

A GTP may also be represented as an interlacing particle system on $(\Z +\frac{1}{2})\times \Z$
as follows. Given a GTP $[a(i,j)]$ with $M$ rows, the particle system
$[\nu(i,j)]$ associated to it has $M$ rows of particles, with particles on row
$i$ being placed on the horizontal line $\{y=i\}$ of the plane, and the position of the $j$-th particle on row $i$ is
$$\Big (\nu(i,j),\, i \Big) = \Big (a(i,j) - j + \frac{1}{2}, \, i \Big)\;\; \text{for every}\;\; 1 \leq j \leq i.$$
The transformation $a(i,j) \to a(i,j)-j + \frac{1}{2}$ makes the order constraints strict and the interlacing constraints semi-strict:
$$\nu(i,j) \geq \nu(i-1,j) > \nu(i,j+1).$$
The \emph{jumps} of an interlacing particle system $\nu$ with $M$ rows is a set of
points in $\Z \times \{1, \ldots, M-1\}$, defined inductively from the top row to the bottom as follows.
Given two consecutive rows $[\nu(i,\cdot)]$ and $[\nu(i-1,\cdot)]$, the jumps on row $i$
consist of particles at the positions
$$(k,i-1) \in \Z^2 \;\;\text{for every integer}\; k \in [\nu(i-1,j), \nu(i,j)] \;\;\text{and every}\;\; 1 \leq j \leq i-1.$$
In other words, the jumps of row $i$ are placed on the horizontal line $\{y = i-1\}$
and fill out integers in the intervals $[\nu(i-1,j),\nu(i,j)]$ for every $1 \leq j \leq i-1$.
Note that $\nu$ may determined from its top row and set of jumps.

\subsection{Poissonized Young tableaux} \label{sec:poissonyt}

For a YD $\lm$, let $[0,1]^{\lm}$ denote the set of functions from
the cells of $\lm$ into $[0,1]$. Let $\mathrm{PYT}(\lm)$ denote the set of all
functions $T  \in [0,1]^{\lm}$ that satisfy the following tableau constraints.
\begin{align} \label{eqn:ytc}
(1)&\; T(i,j) \leq T(i,j+1) \;\text{for every}\; (i,j) \;\text{and}\; (i,j+1) \in \lm, \\
\nonumber (2)&\; T(i,j) \leq T(i+1,j) \;\text{for every}\; (i,j) \;\text{and}\; (i+1,j) \in \lm.
\end{align}
The \emph{Poissonized tableau} (PYT) of shape $\lm$ is an element of
$\mathrm{PYT}(\lm)$. The Poissonized staircase shaped tableau of size
$N = \binom{n}{2}$ is an element of $\mathrm{PYT}(\sst)$.

Let $\T_{\lm}$ denote a uniformly random element of $\mathrm{PYT}(\lm)$. Then
$\T_{\lm}$ is related to a uniformly random SYT of shape $\lm$ in the following way.
First, the entries of $\T_{\lm}$ are distinct with probability 1. Given
that, consider the random SYT $\Tb_{\lm}$ obtained by inserting $k$ into the cell
that contains the $k$-th smallest element of $\T_{\lm}$. Then $\Tb_{\lm}$ is a
uniformly random SYT of shape $\lm$. In the other direction, $\T_{\lm}$ can be
generated by first sampling $\Tb_{\lm}$, then independently sampling a uniformly
random $Y \in [0,1]^{\lm}$, and setting $\T_{\lm}(i,j)$ to be the
$\Tb_{\lm}(i,j)$-th smallest entry of $Y$.

Throughout the paper, $\T_{\lm}$ denotes a uniformly random element of $\mathrm{PYT}(\lm)$
and $\Tb_{\lm}$ denotes a uniformly random SYT of shape $\lm$.

\subsection{Jumps of Poissonized tableaux and local limit} \label{sec:limitsetup}

Any $T \in \mathrm{PYT}(\lm)$ can be represented as an interlacing particles system
with a fixed top row in the following manner. Consider $0 \leq t \leq 1$ and let
$$\mathrm{YD}^{(t)} = \{ (i,j) \in \sst : T(i,j) \leq t\}.$$
The tableau constraints (\ref{eqn:ytc}) ensure that $\rm{YD}^{(t)}$ is a YD for
every $t$. Recall that a YD can be made to have an infinite number of rows
by appending rows of size 0 after the last positive row. Encode $\lm$ as
particle configuration on $\Z + \frac{1}{2}$ by placing a particle at position
\begin{equation} \label{eqn:gt} \nu_j = \lm_j - j + \frac{1}{2} \;\;\text{for}\;\; j \geq 1.\end{equation}
This is an infinite particle configuration on $\Z + \frac{1}{2}$ such that $\nu_1 >
\nu_2 > \cdots$ and $\nu_j - \nu_{j+1} = 1$ for $j > \ell(\lm)$ (shown in Figure \ref{fig:YD_projection}).
Let $\nu^{(t)}$ be the particle configuration associated to $\rm{YD}^{(t)}$ via \eqref{eqn:gt} and let
$\nu = (\nu^{(t)}; 0 \leq t \leq 1)$ be the particle system on $(\Z+ \frac{1}{2})
\times [0,1]$ with a particle at position $(x,t)$ if and only if $x \in \nu^{(t)}$.

The particle system $\nu^{(t)}$ viewed in reverse time, i.e., from $t=1$ to
$t=0$, can be interpreted as an ensemble of non-intersecting and non-increasing
paths $p(i,u)$, for $1 \leq i \leq \ell(\lm)$. Let $p(i,u)$ be the $(\Z+
\frac{1}{2})$-valued path starting from $p(i,0) = \nu^{(0)}_i$ and decreasing an
integer unit at the times $1-T(i,\lm_i), 1- T(i,\lm_i-1), \ldots, 1-T(i,1)$. If some
of the entries are equal then $p(i,u)$ decreases by the number of consecutive equal
entries. The paths should be left continuous so that the jumps occur immediately
after the jump times. The path $p(i,u)$ decreases by $\lm_i$ units with final position
$p(i,1) = -i + \frac{1}{2}$. Due to the columns of $T$ being non-decreasing --
condition (2) of (\ref{eqn:ytc}) -- the paths are \emph{non-intersecting}: $p(i,u) >
p(i+1,u)$ for every $i$ and $u$. Figure \ref{fig:interlace} shows the paths associated
to a staircase shaped PYT.

The \emph{jumps} of $p(i,u)$ consists of points $(x,t) \in \Z \times [0,1]$ such that
\begin{enumerate}
\item $1-t$ is a discontinuity point of $p(i,u)$, i.e., $t$ equals some entry of $T$ on row $i$.
\item $x$ in an integer in the interval $\big [p(i,(1-t)+), \, p(i,1-t) \big]$, where $p(i,u+) = \lim_{s \downarrow u} p(i,s)$.
\end{enumerate}
The paths can be reconstructed from their jumps and initial positions.
The \emph{jumps} of $\nu$, and also of $T$, is the (possibly) multiset of $\Z \times [0,1]$ defined by
\begin{equation} \label{eqn:sytjumps}
X = \{ (x,t) : (x,t) \;\;\text{is a jump of some path}\;\; p(i,u)\}.
\end{equation}
$X$ may be a multiset because two adjacent paths may jump at the same time
by amounts that causes some of their jumps to coincide. The coinciding jumps has
be counted with multiplicity. However, if the entries of $T$ are distinct then $X$ is
a simple set. The tableau can be reconstructed from its jumps and the initial position
of the paths.

Let $\X_{\lm}$ denote the jumps of a uniformly random element $\T_{\lm}$
of $\mathrm{PYT}(\lm)$. $\X_{\lm}$ is simple almost surely since $\T_{\lm}$
has distinct entries almost surely. Theorem \ref{thm:pyt} asserts
that $\X_{\lm}$ is a determinantal point process on $\Z \times [0,1]$.

\subsection{Determinantal point processes} \label{sec:dpp}

We describe some basic notions about point processes; for a thorough
introduction see \cite{Bor, DV}. Let $S$ be a locally compact Polish space.
A \emph{discrete subset} $X$ of $S$ is a countable multiset of $S$ with no
accumulation points. By identifying $X$ with the measure $\sum_{x \in X} \delta_{x}$,
the space of discrete subsets can be given the topology of weak convergence
of Borel measures on $S$. This means that $X_n \to X_{\infty}$ if for every compact subset
$C \subset S$, $\limsup_n \, \# (C \cap X_n) \leq \#(C \cap X_{\infty})$, where cardinality is taken
with multiplicity.


A discrete set is simple if every point in it has multiplicity one. A point process on $S$
is a Borel-measurable random discrete set of $S$.
All point processes considered in this paper will be simple almost surely.

Throughout the paper we denote $\#_{\Z}$ to be counting measure on $\Z$ and $\Lb(A)$
to be Lebesgue measure on a measurable subset $A \subset \R$. Also, $\mu_1 \otimes \mu_2$
denotes the product of measures $\mu_1$ and $\mu_2$, and $\mu^{\otimes k}$
denotes the $k$-fold product of $\mu$.

A \emph{determinantal point process} $\X$ on $S$ is a simple point process
for which there is a correlation kernel $K : S \times S \to \R$, and a Radon
measure $\mu$ on $S$, called the reference measure, with the following property.
For every continuous $f : S^k \to \R$ of compact support,
\begin{equation} \label{eqn:dpp}
\mathbb{E} \,\Bigg [\sum_{\substack{(x_1,\ldots, x_k) \in \X^k \\ \,x_1, \ldots, x_k \; \text{distinct}}} f(x_1, \ldots, x_k)\Bigg] =
\int \limits_{S^k} \det \left [ K(x_i,x_j) \right] f(x_1,\ldots,x_k) \,\mu^{\otimes k}(dx_1,\ldots,dx_k).
\end{equation}
Expectations of the form given by the l.h.s.~of (\ref{eqn:dpp}) determine the law of $\X$ under mild
conditions on $K$ \cite{Lenard}. This will be the case in this paper as the correlation kernels we
consider will be continuous. If $S$ is discrete then it is customary to take the reference measure to be counting measure.
In this case $\X$ is determinantal if for every finite $A \subset S$,
\begin{equation*}
\pr{ A \subset \X} = \det \left [ K(x,y) \right ]_{x,y \in A}.
\end{equation*}

\begin{rem} \label{rem:dpp}
The correlation kernel of a determinantal point process is not unique. If $\X$ is a determinantal point process
with correlation kernel $K$ then $K$ may be replaced by $\frac{g(x)}{g(y)}K(x,y)$, for any non-vanishing
function $g$, without changing determinants on the r.h.s.~of (\ref{eqn:dpp}). Thus the new kernel
determines the same process. This observation will be used multiple times.
\end{rem}

The determinantal point processes that we consider will be on spaces of the form
$S = \Z \times \{1, \ldots, M\}$, or $S = \Z \times [0,1]$, or $S = \Z \times \R_{\geq 0}$,
with reference measures being, respectively, counting measure, $\#_{\Z} \otimes \Lb[0,1]$
and $\#_{\Z} \otimes \Lb(\R_{\geq 0})$. The following lemma records some facts
that will be used in deriving weak limits of determinantal point processes. We do not include the
proof as it is rather standard; see \cite{DV, Lenard}.

\begin{lemma} \label{lem:dpp}
I) Let $\X_M$ be a determinantal point process on $\Z \times \{1, \ldots, M\}$
with correlation kernel $K_M$. For $x_1,x_2 \in \Z$ and $0 \leq t_1,t_2 \leq 1$, let
$$k_M(x_1,t_1; x_2,t_2) = M K_M(x_1, \lceil Mt_1 \rceil; x_2, \lceil Mt_2 \rceil).$$
Suppose that $k_M \to k$ uniformly on compact subsets of $\Z \times (0,1)$.
Then the point process
$$\X_M^{\mathrm{scaled}} = \{ (x, t/M): (x,t) \in \X_M\}$$
restricted to $\Z \times (0,1)$ converges weakly to a determinantal point process $\X$ whose
reference measure is $\#_{\Z} \otimes \Lb(0,1)$ and whose correlation kernel is $k$.
\smallskip

\noindent II) Let $\X_n$ be a determinantal point process on $\Z \times (0,1)$ with reference
measure $\#_{\Z} \otimes \Lb(0,1)$ and correlation kernel $K_n$.
For $c_n \in \Z$ and $\beta > 0$, define a point process on $\Z \times \R_{> 0}$ by
$$\X_n^{\mathrm{scaled}} = \left \{ \big(x- c_n, \,\beta n(1-t) \big): (x,t) \in \X_n \right \}.$$
The correlation kernel of $\X_n^{\mathrm{scaled}}$ with reference measure $\#_{\Z} \otimes \Lb(\R_{> 0})$ is
$$k_n(x_1,u_1; x_2,u_2) = (\beta n)^{-1} K_n \left (x_1+c_n, 1- \frac{u_1}{\beta n}; \,x_2+c_n, 1- \frac{u_2}{\beta n} \right).$$
If $k_n \to k$ uniformly on compact subsets of $\Z \times \R_{> 0}$ then $\X_n^{\mathrm{scaled}}$
converges weakly to a determinantal point process $\X$ with reference measure $\#_{\Z} \otimes \Lb(\R_{> 0})$
and correlation kernel $k$.

Extend $\X$ to a point process on $\Z \times \R_{\geq 0}$ without additional points.
Then $\X_n^{\rm{scaled}}$ converges weakly to $\X$ on $\Z \times \R_{\geq 0}$ if
the points of $\X_n^{\rm{scaled}}$ do not accumulate at the boundary in the sense that for every $x \in \Z$,
$$ \lim_{\eps \to 0} \, \limsup_{n \to \infty} \, \pr{ \X_n^{\rm{scaled}} \cap (\{x\} \times [0,\eps]) \neq  \emptyset} = 0.$$
\end{lemma}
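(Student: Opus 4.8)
The plan is to reduce weak convergence of the point processes to convergence of their $k$-point correlation measures, and then invoke the standard criterion (see \cite{Lenard, DV}): a sequence of simple point processes on a locally compact Polish space whose correlation functions converge uniformly on compact sets to limit functions obeying a Carleman-type growth bound converges weakly to the unique point process carrying those correlation functions. For determinantal processes the growth bound comes for free: Hadamard's inequality gives $\bigl|\det[K(y_i,y_j)]_{i,j=1}^{k}\bigr|\le(\sqrt k\,\sup|K|)^{k}$ wherever $K$ is bounded, so on a fixed compact $C$ the factorial moments of $\#(C\cap\X_n)$ are bounded uniformly in $n$ by $(|C|\sqrt k\,\sup_C|K_n|)^k$, which is more than enough both for tightness and for the limiting counting statistics to be moment-determinate. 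The task then reduces to computing the correlation functions of the rescaled processes and checking that they are determinants of the stated kernels; the existence of the limiting determinantal process needs no separate argument, being produced as the weak limit itself.

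For part I, I would start from the fact that the $k$-point correlation function of $\X_M$ with respect to counting measure on $\Z\times\{1,\dots,M\}$ is $\det[K_M(y_i,y_j)]$. After rescaling the second coordinate by $1/M$ the atoms sit at spacing $1/M$, so the correlation density of $\X_M^{\mathrm{scaled}}$ with respect to $\#_{\Z}\otimes\Lb(0,1)$, evaluated at $\bigl((x_1,t_1),\dots,(x_k,t_k)\bigr)$, equals $M^k$ times the $\X_M$-correlation at the nearest atoms, namely $M^k\det[K_M(x_i,\lceil Mt_i\rceil;x_j,\lceil Mt_j\rceil)]$; since $M^k\det[A_{ij}]=\det[MA_{ij}]$ this is exactly $\det[k_M(x_i,t_i;x_j,t_j)]$. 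The hypothesis $k_M\to k$ uniformly on compacts of $\Z\times(0,1)$, together with continuity of the determinant, then gives uniform-on-compacts convergence of these correlation functions to $\det[k]$, and the Hadamard bound on $k_M$ over compacts feeds the criterion above, yielding weak convergence of $\X_M^{\mathrm{scaled}}$ restricted to $\Z\times(0,1)$ to the determinantal process with kernel $k$ and reference measure $\#_{\Z}\otimes\Lb(0,1)$.

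For part II, I would first observe that $u=\beta n(1-t)$ is a diffeomorphism of the continuous coordinate with constant Jacobian $1/(\beta n)$, so a density with respect to $\#_{\Z}\otimes\Lb(0,1)$ turns into a density with respect to $\#_{\Z}\otimes\Lb(\R_{>0})$ after multiplying the $k$-point function by $(\beta n)^{-k}$; again $(\beta n)^{-k}\det[K_n(\cdots)]=\det[(\beta n)^{-1}K_n(\cdots)]=\det[k_n(\cdots)]$, matching the stated kernel, and the same criterion yields weak convergence on $\Z\times\R_{>0}$. To upgrade this to $\Z\times\R_{\geq 0}$, note that the extended limit $\X$ a.s.\ has no points on $\{u=0\}$, so convergence can fail to pass to the larger space only through escape of mass to the boundary slab $\{u=0\}$; the stated hypothesis $\lim_{\eps\to0}\limsup_{n}\pr{\X_n^{\mathrm{scaled}}\cap(\{x\}\times[0,\eps])\neq\emptyset}=0$ for each $x$ precisely excludes this, and combining it with the convergence on $\Z\times\R_{>0}$ via an $\eps$-truncation of the compact test sets plus a union bound over the finitely many relevant values of $x$ finishes the argument.

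The one genuinely delicate point --- and the reason the paper cites \cite{Lenard, DV} rather than re-deriving everything --- is the passage from convergence of correlation measures to weak convergence of the processes on these non-compact underlying spaces: one has to check that the uniform-on-compacts hypothesis together with the Hadamard bound simultaneously license identifying the limit by its correlation functions and supply the tightness needed. The Jacobian bookkeeping and the boundary argument in part II are routine by comparison, the latter being an essentially mechanical consequence of the non-accumulation hypothesis stated in the lemma.
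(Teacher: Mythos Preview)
The paper does not actually prove this lemma; it states explicitly ``We do not include the proof as it is rather standard; see \cite{DV, Lenard}.'' Your sketch is a correct and reasonable outline of the standard argument that those references supply --- computing the rescaled correlation functions, invoking Hadamard's bound for tightness and moment-determinacy, and handling the boundary via the non-accumulation hypothesis --- so there is nothing to compare beyond noting that you have written out what the paper chose to omit.
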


\section{Determinantal representation of Poissonized tableaux} \label{sec:poissonization}

\subsection{Determinantal representation of discrete interlacing particle systems} \label{sec:petrov}

In order to prove Theorem \ref{thm:pyt} we use a determinantal description of discrete
interlacing particle systems due to Petrov. This is the main tool behind the proof.

Let $\nu = (\nu(M,1) + \frac{1}{2} > \cdots > \nu(M,M) + \frac{1}{2})$ be a fixed particle
configuration on $\Z + \frac{1}{2}$. Here we abuse notation from Section \ref{sec:gtp}
to have the $\nu(M,j)$s be integers instead of half-integers. Let $\mathbb{P}_{\nu}$ be the
uniform measure on all interlacing particle systems or, equivalently, GTPs as described in
Section \ref{sec:gtp}, with fixed top row $\nu$. Let $\X_{\nu}$ be the point process of jumps
of an interlacing particle system sampled according to $\mathbb{P}_{\nu}$,
where the jumps are as described in Section \ref{sec:gtp}.

Petrov \cite[Theorem 5.1]{Petrov} proves that $\mathbb{P}_{\nu}$ is
a determinantal point process on  $(\Z+ \frac{1}{2}) \times \{1, \ldots, M\}$ with an explicit correlation kernel.
According to the notation there, particles live on $\Z$ but we have translated particle
systems by $1/2$ so that the jumps are integral. In particular, in the notation of \cite[Theorem 5.1]{Petrov},
one has $N=M$, $x ^{M,j} = \nu(M,j)$ and the variables $x_1, x_2$ take integer values.
In \cite[Section 6.1]{Petrov} it is explained that the point process of jumps,
$\X_{\nu}$, is also determinantal on $\Z \times \{1, \ldots, M-1\}$ and its correlation kernel
is given in terms of the correlation kernel of $\mathbb{P}_{\nu}$ in \cite[Theorem 6.1]{Petrov}
(up to the translation by $1/2$).

In particular, \cite[Theorem 6.1]{Petrov} proves that the correlation kernel of the jumps is
$$K_{\X_{\nu}}(x_1,m_1;x_2,m_2) = (-1)^{x_2-x_1 + m_2-m_1} \, K_{\mathbb{P}_{\nu}}(x_1-1,m_1+1;x_2,m_2),$$
where $K_{\mathbb{P}_{\nu}}$ is the kernel presented in \cite[Theorem 5.1]{Petrov}.
The discussion there is in terms of lozenge tilings of polygonal domains using three types of
lozenges as depicted in Figure \ref{fig:lozenge}. It is proved that the positions
of any of the three types of lozenges in such a uniformly random tiling is a determinantal point process.
The jumps of an interlacing particle system are given by the positions of the lozenges of the rightmost
type from Figure \ref{fig:lozenge}, where as the particles themselves are given by the positions of
lozenges of the leftmost type. Jumps occur when a lozenge of the leftmost type is glued along its
bottom diagonal to a lozenge of the rightmost type; see \cite[Figure 3]{Petrov} for
such a tiling. By Remark \ref{rem:dpp}, $(-1)^{x_2-x_1 + m_2-m_1} K_{\mathbb{P}_{\nu}}(x_1-1,m_1+1;x_2,m_2)$
defines the same determinantal point process as $K_{\mathbb{P}_{\nu}}(x_1-1,m_1+1;x_2,m_2)$,
and we will use the latter kernel.

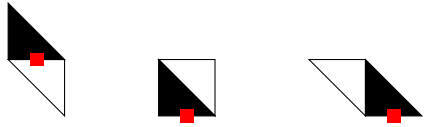
\begin{figure}[hbtp]
\begin{center}
\begin{tikzpicture}
\draw (0,0.75) -- (0,0)   -- (0.75,0) -- cycle;
\draw (0,0) -- (0.75,0)   -- (0.75,-0.75) -- cycle;
\fill[black] (0,0.75) -- (0,0)   -- (0.75,0) -- cycle;
\fill[red] (0.29,-0.08) rectangle (0.47,0.09);

\draw (2,0) -- (2.75,0) -- (2.75,-0.75) -- cycle;
\draw (2,0) -- (2,-0.75) -- (2.75,-0.75) -- cycle;
\fill[black] (2,0) -- (2,-0.75) -- (2.75,-0.75) -- cycle;
\fill[red] (2.29,-0.66) rectangle (2.47,-0.84);

\draw (4,0) -- (4.75,0) -- (4.75,-0.75) -- cycle;
\draw (4.75,0) -- (4.75,-0.75) -- (5.5,-0.75) -- cycle;
\fill[black] (4.75,0) -- (4.75,-0.75) -- (5.5,-0.75) -- cycle;
\fill[red] (5.04,-0.66) rectangle (5.22,-0.84);

\end{tikzpicture}
\caption{\small{The 3 types of lozenges used in tiling polygonal domains that correspond to interlacing particle systems.
The position of a lozenge is given by the midpoint of the horizontal side of the black triangular part (red square).
The positions have integer coordinates. Particles correspond to positions of the leftmost lozenges
but translated by $1/2$ in the $x$-coordinate (in our notation). Jumps correspond to positions of the rightmost lozenges.}}
\label{fig:lozenge}
\end{center}
\end{figure}

Some notation is needed in order to express the kernel for the point process of jumps.
For integers $a$ and $b$, let $C[a,b)$ denote a closed, counter-clockwise contour on $\C$
that encloses only the integers $a, a+1, \ldots, b-1$ if $a > b$, and empty otherwise.
Throughout the paper, all contours intersect the real line at points which have distance at least
$1/10$ from the integers. This ensures that the integrands of all contour integrals will be a
uniform distance away from their poles.

For $z \in \C$ and an integer $m \geq 1$, let
$$(z)_m = z(z+1) \cdots (z-m+1), \quad (z)_0 = 1.$$

\begin{thm}[{\cite[Theorem 5.1]{Petrov}}] \label{thm:petrov}
The process of jumps, $\X_{\nu}$, of a uniformly random interlacing particle system
with fixed top row $\nu = \big (\nu(M,1) + \frac{1}{2} > \nu(M,2) + \frac{1}{2} > \cdots > \nu(M,M) + \frac{1}{2} \big)$
is a determinantal point process on $\Z \times \{1,\ldots,M-1\}$ with correlation kernel $K$ as follows.
For $x_1, x_2 \in \Z$ and $1 \leq m_1, m_2 \leq M-1$,
\begin{align} \label{eqn:petrov}
&K(x_1,m_1;\,x_2,m_2) = - \ind{m_2 \leq m_1, \,x_2 < x_1} \, \frac{(x_1-x_2)_{m_1-m_2}}{(m_1-m_2)!} \;+\\
\nonumber & + \dint{[x_2,\nu(M,1)+1)}{[x_1-M,\nu(M,1)+1)} \Bigg [\frac{(z-x_2+1)_{M-m_2-1}}{(w-x_1+1)_{M-m_1}}
\cdot \frac{(M-m_1-1)!}{(M-m_2-1)!} \times \\
\nonumber & \quad \times \frac{1}{w-z} \cdot \prod_{j=1}^M \frac{w - \nu(M,j)}{z - \nu(M,j)} \Bigg].
\end{align}
The contour $C_w[-M,\nu(M,1)+1)$ contains $C_z[x_2,\nu(M,1)+1)$ without intersecting it.
\end{thm}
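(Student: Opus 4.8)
The plan is to realize the uniformly random Gelfand--Tsetlin pattern with fixed top row $\nu$ as a determinantal point process through the Eynard--Mehta formalism for ensembles of non-intersecting lattice paths, to extract the explicit kernel by a residue computation, and then to pass to the jump process by a change of variables.

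First I would recast the uniform measure on GTPs with top row $\nu$ as a chain of interlacing configurations $\nu^{(1)} \preceq \cdots \preceq \nu^{(M)} = \nu$, each transition weighted by $\ind{\nu^{(k-1)} \preceq \nu^{(k)}}$. Following the standard path encoding of such interlacing arrays, these constraints become the non-crossing condition for a family of lattice paths whose single-step transition $\phi$ is the indicator of the one-particle inequality between consecutive rows. The Lindstr\"om--Gessel--Viennot lemma then expresses the partition function as a single determinant, and the Eynard--Mehta theorem (equivalently, the Borodin--Ferrari machinery for $(2+1)$-dimensional interlacing arrays) gives that the positions of all particles, across all rows, form a determinantal process with a kernel of the shape
$$
K_{\mathbb{P}_\nu}(x_1,m_1;x_2,m_2) = -\,\ind{m_2 \le m_1,\, x_2 < x_1}\,\phi^{*(m_1-m_2)}(x_2,x_1) \;+\; \sum_{i,j} \Psi^{m_1}_i(x_1)\,[A^{-1}]_{ij}\,\Phi^{m_2}_j(x_2),
$$
where $\phi^{*r}$ is the $r$-fold convolution of $\phi$, the functions $\Psi^m_i$ describe the propagation of the $i$-th top-row particle down to row $m$, the $\Phi^m_j$ encode the initial data, and $A$ is a finite Gram matrix pairing the two families.

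Second, I would make each ingredient explicit. The convolution $\phi^{*r}(x_2,x_1)$ evaluates to the binomial count $(x_1-x_2)_r/r!$, which with $r=m_1-m_2$ reproduces the first term of the stated kernel. For the bilinear term, each $\Psi^m$ admits a single contour integral representation with integrand a ratio of Gamma functions, $\frac{(M-m-1)!}{(w-x+1)_{M-m}}$ times $\prod_{j=1}^M (w-\nu(M,j))$, and each $\Phi^m$ one with integrand $\frac{(z-x+1)_{M-m-1}}{(M-m-1)!}$ times $1/\prod_{j=1}^M (z-\nu(M,j))$. The decisive step is the evaluation of $A^{-1}$: a Cauchy-type determinant identity collapses the finite double sum $\sum_{i,j}\Psi^{m_1}_i(x_1)[A^{-1}]_{ij}\Phi^{m_2}_j(x_2)$ into the single double contour integral
$$
\dint{[x_2,\nu(M,1)+1)}{[x_1-M,\nu(M,1)+1)}\Big[\frac{(z-x_2+1)_{M-m_2-1}}{(w-x_1+1)_{M-m_1}}\cdot\frac{(M-m_1-1)!}{(M-m_2-1)!}\cdot\frac{1}{w-z}\cdot\prod_{j=1}^M\frac{w-\nu(M,j)}{z-\nu(M,j)}\Big],
$$
with the contours nested, $C_w \supset C_z$, and arranged to enclose exactly the intended integers while staying a fixed distance from all poles. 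Finally, the kernel of the jump process $\X_\nu$ follows from $K_{\mathbb{P}_\nu}$ via the identity $K_{\X_\nu}(x_1,m_1;x_2,m_2) = (-1)^{x_2-x_1+m_2-m_1}\,K_{\mathbb{P}_\nu}(x_1-1,m_1+1;x_2,m_2)$ recorded above, and by Remark \ref{rem:dpp} the sign prefactor may be discarded without changing any correlation; this yields the stated formula.

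The main obstacle is this middle step: correctly identifying the biorthogonal families $\Psi$ and $\Phi$, computing the inverse Gram matrix, and resumming the bilinear expression into the clean double integral. Concretely it reduces to a residue identity asserting that $\Psi^m$ paired against the top-row data reproduces a Kronecker delta, after which the sum over the $M$ top-row particles is rewritten as a contour integral encircling $\nu(M,1),\dots,\nu(M,M)$ --- this is exactly what manufactures the product $\prod_{j}(w-\nu(M,j))/(z-\nu(M,j))$. Keeping track of the polynomial prefactors --- the falling-factorial ratio $(z-x_2+1)_{M-m_2-1}/(w-x_1+1)_{M-m_1}$ and the factorial ratio $(M-m_1-1)!/(M-m_2-1)!$ --- and fixing the precise nesting and enclosure of the contours is where essentially all of the bookkeeping lies; the remainder is now-standard determinantal-process technology.
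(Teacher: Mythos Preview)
The paper does not prove this theorem: it is quoted directly from \cite[Theorem~5.1]{Petrov}, and the only argument supplied in Section~\ref{sec:petrov} is the passage from the particle kernel $K_{\mathbb{P}_\nu}$ to the jump kernel $K_{\X_\nu}$ via the substitution $(x_1,m_1)\mapsto(x_1-1,m_1+1)$ together with the conjugation by $(-1)^{x_2-x_1+m_2-m_1}$, as explained in \cite[Section~6.1, Theorem~6.1]{Petrov}. Your proposal goes further and sketches an actual proof of the underlying result; the outline you give---encode the interlacing array as non-intersecting paths, apply the Eynard--Mehta machinery to obtain a kernel of the form ``convolution term plus biorthogonal sum,'' then collapse the biorthogonal sum into a double contour integral by a residue/Cauchy-determinant computation---is essentially the strategy of \cite{Petrov} itself. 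So your approach is correct, but it is a reconstruction of the cited source rather than a comparison with anything the present paper does; the paper simply takes the formula as input.
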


\subsection{Proof of Theorem \ref{thm:pyt}} \label{sec:pytlimit}

Let $\lm$ be a Young diagram with at most $n$ rows of positive length, that is,
$\ell(\lm) \leq n$. For $M \geq n$, consider semi-standard Young tableaux of shape
$$\lm_M = (\lm_1, \ldots, \lm_n, \underbrace{0, \ldots, 0}_{M-n\; \text{zeroes}}).$$
The effect of adding $M-n$ zero rows is to allow the entries in the non-zero rows of
$\lm$ to be between $1$ to $M$. The law of a uniformly random PYT of shape $\lm$
is the weak limit of a uniformly random semi-standard Young tableau of shape $\lm_M$, as $M \to \infty$,
after the entries are rescaled onto the interval $[0,1]$. Indeed, the law of a uniformly random PYT
of shape $\lm$ can be approximated by the uniform distribution on points $[T(i,j)] \in [0,1]^{\lm}$
that satisfy the tableau constraints (\ref{eqn:ytc}), with each $T(i,j) = k/M$ for some $1 \leq k \leq M$,
and the column constraints being strict.

The top row of particle systems associated to semi-standard Young tableaux of shape $\lm_M$
under the bijection described in Section \ref{sec:gtp} is
\begin{equation} \label{eqn:discretetoprow}
\nu^{(M)}_{\lm} = \left ((\lm_1-1) + \frac{1}{2}, \ldots, (\lm_n-n)+\frac{1}{2},
-(n+1) + \frac{1}{2},-(n+2) + \frac{1}{2},\ldots, -M + \frac{1}{2} \right ).
\end{equation}
Due to the approximation scheme above, and the bijection between semi-standard Young
tableaux and interlacing particle systems discussed in Section \ref{sec:poissonyt},
the process of jumps, $\X_{\lm}$, of a uniformly random PYT of shape $\lm$ is the weak
limit of the process of jumps, $\X_{\lm_M}$, of a uniformly random interlacing particle
system with top row $\nu^{(M)}$ after these jumps are rescaled onto $\Z \times \{1/(M-1), \ldots, 1\}$.
Thus, we derive a determinantal description of the rescaled jumps of
$\X_{\lm_M}$ in the large $M$ limit. Since $\X_{\lm}$ almost surely contains no jumps on the
boundary $\Z \times \{0, 1\}$, it suffices to derive the determinantal description with $\X_{\lm}$
restricted to $\Z \times (0,1)$.

Let $K_{M}$ denote the kernel from Theorem \ref{thm:petrov} for the process $\mathcal{X}_{\lm_{M}}$.
As explained in Remark \ref{rem:dpp}, the kernel $(M-1)^{x_2-x_1}K_M(x_1,m_1;x_2,m_2)$ determines
the same determinantal point process. By Lemma \ref{lem:dpp}, in order to prove the theorem it suffices to show that
$$(M-1)^{x_2-x_1+1} K_{M}(x_1, \lceil (M-1)t_1 \rceil; x_2, \lceil (M-1)t_2 \rceil) \; \longrightarrow\; K_{\lm}, $$
uniformly over compacts subsets of $x_1,x_2 \in \Z$ and $t_1, t_2 \in (0,1)$.

We begin by deforming the contours in the double contour integral from (\ref{eqn:petrov}) that defines $K_M$.
This will simplify the representation of $K_M$ for taking the large $M$ limit.
Deform the $w$-contour, $C_w[x_1-M,\nu_{\lm_M}(M,1)+1)$, by pushing it leftward past the $z$-contour,
$C_z[x_2,\nu_{\lm_M}(M,1)+1)$, so that it encloses the consecutive integers $\min \{x_1,x_2\} -1, \ldots, x_1-M$.
The deformation results in picking up residues at $w = z$ and also at the consecutive integers $w = x_1-1, \ldots, x_2$, if $x_2 < x_1$.

Let $J_M(w, x_1,m_1; z, x_2, m_2)$ denote the integrand of the double contour integral from (\ref{eqn:petrov})
but without the factor of $1/(w-z)$. Note that $\nu_{\lm_M}(M,1) +1= \lm_1$. Calculating the residues at $w=z$
and leaving the remaining residues as a contour integral provides the following representation of $K_M$.
\begin{align} \label{eqn:KM}
&K_M(x_1,m_1;x_2,m_2) = - \ind{m_2 \leq m_1, \,x_2 < x_1} \, \frac{(x_1-x_2)_{m_1-m_2}}{(m_1-m_2)!} \; + & (Ia)\\
\nonumber &+ \frac{1}{2 \pi \mathbf{i}} \oint \limits_{C_z[x_2, \lm_1)} dz\, \frac{(z-x_2+1)_{M-m_2-1}}{(z-x_1+1)_{M-m_1}}
\cdot \frac{(M-m_1-1)!}{(M-m_2-1)!} \;+ & (Ib)\\
\nonumber &+ \frac{1}{(2 \pi \mathbf{i})^2} \oint \limits_{C_z[x_2, \lm_1)} dz
\oint \limits_{C_w[x_1-M, \min \{x_1,x_2\}) \cup C_w[x_2, x_1)} dw\, \frac{J_M(w,x_1,m_1; z, x_2, m_2)}{w-z} & (II).
\end{align}

The following lemma simplifies $(Ia) + (Ib)$.

\begin{lemma} \label{lem:hypergeometric}
For $0 \leq m_1, m_2 \leq M-1$,
\begin{align*}
&\frac{1}{2 \pi i} \oint \limits_{C_z[x_2, \lm_1)} dz\, \frac{(z-x_2+1)_{M-m_2-1}}{(z-x_1+1)_{M-m_1}}
\cdot \frac{(M-m_1-1)!}{(M-m_2-1)!} \; - \ind{m_2 \leq m_1, \,x_2 < x_1} \, \frac{(x_1-x_2)_{m_1-m_2}}{(m_1-m_2)!} \\
& = \ind{m_2 > m_1, \, x_1 > x_2} \frac{(m_1-m_2+1)_{x_1-x_2-1}}{(x_1-x_2-1)!}.
\end{align*}
\end{lemma}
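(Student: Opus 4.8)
The plan is to evaluate the contour integral in the left-hand side by residues and then recognize the resulting sum as a hypergeometric-type identity. First I would observe that the integrand
$$
\frac{(z-x_2+1)_{M-m_2-1}}{(z-x_1+1)_{M-m_1}}
$$
is a ratio of rising/falling factorials, so its only poles inside $C_z[x_2,\lm_1)$ come from the denominator $(z-x_1+1)_{M-m_1} = (z-x_1+1)(z-x_1+2)\cdots(z-x_1+M-m_1)$, i.e.\ from the integers $z = x_1-1, x_1-2, \ldots, x_1-(M-m_1)$ that happen to lie in $[x_2,\lm_1)$. Since $x_2 < \lm_1$ always (as $x_2$ is the position of a particle and $\lm_1-1-x_2$ bounds the contour), the relevant poles are $z \in \{x_2, x_2+1, \ldots, x_1-1\}$ when $x_1 > x_2$; when $x_1 \leq x_2$ there are no poles and the integral vanishes, so I would dispose of that case immediately. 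Thus from the outset the only nontrivial case is $x_1 > x_2$, which is consistent with the indicator $\ind{m_2 > m_1,\, x_1>x_2}$ on the right.

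Next I would compute the residue at $z = x_1 - 1 - j$ for $0 \le j \le x_1-x_2-1$. Writing $z - x_1 + 1 = -j$, the numerator becomes a product $(x_1-x_2-j)(x_1-x_2-j+1)\cdots$ of length $M-m_2-1$, and the denominator, after stripping the simple zero at $z-x_1+1=-j$, is a product over the remaining factors; careful bookkeeping of the signs (each negative factor contributes a $-1$) and of the factorials should collapse the residue into a single ratio of factorials. Summing over $j$ then gives a finite sum which I expect to be exactly the Chu--Vandermonde / Gauss ${}_2F_1$ summation; concretely, the sum over $j$ telescopes or matches $\sum_j \binom{M-m_2-2+j}{j}\binom{\cdots}{\cdots}$, and the closed form is $\frac{(m_1-m_2+1)_{x_1-x_2-1}}{(x_1-x_2-1)!}$ when $m_2 > m_1$. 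I would then note that the subtracted indicator term $\ind{m_2\le m_1,\,x_2<x_1}\frac{(x_1-x_2)_{m_1-m_2}}{(m_1-m_2)!}$ is precisely the contribution that, in the complementary range $m_2 \le m_1$, cancels against the integral: in that regime the same residue computation yields $\frac{(m_1-m_2+1)_{x_1-x_2-1}}{(x_1-x_2-1)!}$, but since $m_1-m_2 \ge 0$ the rising factorial $(m_1-m_2+1)_{x_1-x_2-1}$ equals $\frac{(x_1-x_2-1+m_1-m_2)!}{(m_1-m_2)!}$, which one checks coincides with $\frac{(x_1-x_2)_{m_1-m_2}}{(m_1-m_2)!}\cdot(\text{something})$ — so the difference is $0$, matching the vanishing of the right-hand side there.

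An alternative, possibly cleaner route that I would keep in reserve: recognize the contour integral itself as a known generating-function coefficient. The integral $\frac{1}{2\pi i}\oint_{C_z[x_2,\lm_1)} \frac{(z-x_2+1)_{M-m_2-1}}{(z-x_1+1)_{M-m_1}}\,dz$ extracts, up to normalization, a coefficient in the expansion of $(1-\zeta)^{-a}$-type series, because $(z+1)_k/(z+1)_\ell$ with $\ell>k$ behaves like a Beta integral; passing to the integral representation $\frac{1}{(m_1-m_2)!}\oint$ and comparing with the binomial series for $(1-t)^{m_2-m_1-1}$ at the relevant order would yield the identity directly, bypassing the residue sum. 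Either way, the main obstacle I anticipate is purely organizational: getting all the sign factors $(-1)^{\#\text{negative terms}}$ and the three competing factorials $(M-m_1-1)!$, $(M-m_2-1)!$, and the factorials hidden inside the Pochhammers to line up, and handling the two parity/order regimes $m_2 \gtrless m_1$ uniformly. This is routine but error-prone; the conceptual content is just Chu--Vandermonde.
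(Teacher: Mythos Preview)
Your proposal is correct and follows essentially the same route as the paper: sum the residues at $z = x_2, \ldots, x_1-1$, recognize the resulting sum as a Chu--Vandermonde/hypergeometric identity giving $\ind{x_1>x_2}\frac{(m_1-m_2+1)_{x_1-x_2-1}}{(x_1-x_2-1)!}$ (the paper simply cites \cite[Lemma 6.2]{Petrov} for this step), and then observe that when $m_2 \le m_1$ both $\frac{(m_1-m_2+1)_{x_1-x_2-1}}{(x_1-x_2-1)!}$ and $\frac{(x_1-x_2)_{m_1-m_2}}{(m_1-m_2)!}$ equal $\frac{(m_1-m_2+x_1-x_2-1)!}{(m_1-m_2)!(x_1-x_2-1)!}$, so their difference vanishes. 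Your only vagueness is in that last cancellation (the ``$\cdot(\text{something})$''), which is precisely the binomial identity just stated.
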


\begin{proof}
The integral $(Ib)$ is evaluated in \cite[Lemma 6.2]{Petrov} by summing over residues at $z = x_2, \ldots, x_1-1$
and evaluating the resulting sum in closed form via a hypergeometric identity. We have
$$(Ib) = \ind{x_1 > x_2} \frac{(m_1-m_2+1)_{x_1-x_2-1}}{(x_1-x_2-1)!}.$$
Therefore,
$$(Ia) + (Ib) = \ind{x_1 > x_2} \left [ \frac{(m_1-m_2+1)_{x_1-x_2-1}}{(x_1-x_2-1)!}
- \ind{m_2 \leq m_1} \, \frac{(x_1-x_2)_{m_1-m_2}}{(m_1-m_2)!} \right].$$
If $m_2 \leq m_1$ then the above is 0 because both terms in the difference are
equal to $\frac{(m_1-m_2+x_1-x_2-1)!}{(m_1-m_2)!(x_1-x_2-1)!}$. Hence, $(Ia) + (Ib)$
in non zero only if $x_1 > x_2$ and $m_2 > m_1$ and equals what is given in the statement of the lemma.
\end{proof}

Now consider the expression (II) from (\ref{eqn:KM}). Observe that the zeroes
of $(w-x_1+1)_{M-m_1}$ are at consecutive integers $x_1-1, x_1-2, \ldots, x_1-(M-m_1)$.
On the other hand, the polynomial $\prod_j (w-\nu_{\lm_M}(M,j))$ also has zeroes at $\nu_{\lm_M}(M,n+i) = -(n+i)$
for $1 \leq i \leq M-n$. Therefore, the only poles of $J_M$ in the $w$ variable are at the integers
$x_1-1, x_1-2, \ldots, -n$ so long as $x_1+m_1 \geq 0$. For fixed $x_1$ and $m_1 \geq \sqrt{M}$, say,
the condition $x_1+m_1 \geq 0$ is satisfied for all large $M$. Thus, the contour integral over $C_w[x_1-M, \min \{x_1,x_2\})$
may be shortened to $C_{w}[-n,\min \{x_1,x_2\})$ for all large $M$ if $x_1$ remains fixed and $m_1 \geq \sqrt{M}$.

Having done so, (II) becomes the following integral after changing variables $z \mapsto z+x_2$ and $w \mapsto -w + x_1-1$:
\begin{equation} \label{eqn:II}
(II) = \dint{[0,\lm_1-x_2)}{[0,n+x_1)} \frac{(z+1)_{M-m_2-1}\,(M-m_1-1)!
\prod \limits_{j=1}^M \frac{-w+x_1-1+\nu_{\lm_M}(M,j)}{z+x_2-\nu_{\lm_M}(M,j)}}
{(-w)_{M-m_1}\,(M-m_2-1)! \, (w+z+x_2-x_1+1)}. \end{equation}
By a slight abuse of notation, let $J_M$ henceforth denote the integrand of (\ref{eqn:II}) without the factor $1/(w+z+x_2-x_1+1)$.
The following lemma provides the asymptotic form of $J_M$.

\begin{lemma} \label{lem:Jasymptotic}
Fix integers $x_1$ and $x_2$. Suppose $0 < t_1,t_2 < 1$ and $w$ and $z$ remain bounded and have distance at
least $1/10$ from the integers. Then for $m_1 = \lceil t_1(M-1)\rceil$ and $m_2 = \lceil t_2(M-1)\rceil$,
\begin{equation*}
(M-1)^{x_1-x_2-1}J_M(w,x_1,m_1;z,x_2,m_2) =
(1-t_1)^{w} (1-t_2)^{z} \, \frac{\G{-w} G_{\lm}(z+x_2)}{ \G{z+1} G_{\lm}(x_1-1-w)} \; (1 + O(M^{-1})).
\end{equation*}
The big O term is uniform over $z,w$ so long as the stated assumptions hold and $t_1,t_2$
remain in compact subsets of $(0,1)$. The function $G_{\lm}$ is as stated in Theorem \ref{thm:pyt}.
\end{lemma}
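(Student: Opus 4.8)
The plan is a direct asymptotic analysis of the explicit product $J_M$ in \eqref{eqn:II}. The first step is to rewrite every factor of $J_M$ as a ratio of Gamma functions: the Pochhammer ratio $(z+1)_{M-m_2-1}/(M-m_2-1)!$ equals $\Gamma(z+M-m_2)/\bigl(\Gamma(z+1)\Gamma(M-m_2)\bigr)$; the ratio $(M-m_1-1)!/(-w)_{M-m_1}$ equals $\Gamma(M-m_1)\Gamma(-w)/\Gamma(M-m_1-w)$; and the product $\prod_{j=1}^M$ splits at $j=n$ using the explicit top row, so that $\nu_{\lm_M}(M,j)=\lm_j-j$ for $j\le n$ and $\nu_{\lm_M}(M,j)=-j$ for $n<j\le M$ (from \eqref{eqn:discretetoprow}). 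The block $j\le n$ produces the finite products $\prod_{j=1}^n(x_1-1-w-\lm_j+j)$ and $\prod_{j=1}^n(z+x_2-\lm_j+j)$, which are exactly the denominators appearing in the identity $G_\lm(u)=\Gamma(u+1+n)/\prod_{i=1}^n(u-\lm_i+i)$ of Theorem \ref{thm:pyt}; the block $n<j\le M$ telescopes into $\Gamma(x_1-w+M)\Gamma(z+x_2+n+1)/\bigl(\Gamma(x_1-w+n)\Gamma(z+x_2+M+1)\bigr)$. After this rewriting $J_M$ is a product of finitely many Gamma quotients, some with bounded arguments and some with arguments tending to infinity with $M$.

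Next I would feed the standard uniform expansion $\Gamma(N+a)/\Gamma(N+b)=N^{a-b}\bigl(1+O(N^{-1})\bigr)$, valid as $N\to\infty$ uniformly for $a,b$ in a compact set, into each growing-argument quotient, with $N=M$, $N=M-m_1$, and $N=M-m_2$. Since $m_i=\lceil t_i(M-1)\rceil$ gives $M-m_i=(1-t_i)M+O(1)$, every such quotient contributes a power of $M$ paired with a matching power of $1-t_i$; this is the origin of the prefactors $(1-t_1)^w$ and $(1-t_2)^z$. Collecting the contributions: the $\Gamma(M)$-order factors from the $n<j\le M$ block cancel between numerator and denominator, leaving only a polynomial power of $M$; the bounded factors $\Gamma(z+x_2+n+1)$ and $\Gamma(x_1-w+n)$ from that same block recombine with the finite products over $j\le n$ to reconstitute the ratio $G_\lm(z+x_2)/G_\lm(x_1-1-w)$; the factors $\Gamma(-w)$ and $\Gamma(z+1)^{-1}$ survive from the first two quotients; and the accumulated power of $M$ is exactly the one that, together with the normalizing prefactor in the statement, leaves a finite nonzero limit. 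Reassembling these pieces gives the asserted formula.

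Uniformity of the $O(M^{-1})$ error is essentially automatic from the hypotheses. Along the contours $w$ and $z$ are bounded and stay at distance at least $1/10$ from every integer, so no Gamma function in play is near a pole and every bounded-argument factor is bounded above and below; and since $t_1,t_2$ are confined to a compact subset of $(0,1)$, the integers $M-m_1$ and $M-m_2$ grow linearly in $M$, so each application of the Gamma-ratio asymptotic has error $O(M^{-1})$ with implied constant depending only on the relevant compact sets. A product of finitely many $\bigl(1+O(M^{-1})\bigr)$ factors is again $1+O(M^{-1})$.

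There is no conceptual obstacle here; the entire difficulty is bookkeeping. The delicate points are separating the Gamma factors correctly into those whose arguments blow up (and therefore pair off and cancel) versus those that stay bounded (and appear in the limit); keeping straight the direction of the Pochhammer symbols and the residual $\Gamma$-factor shifts left over from the change of variables already performed in \eqref{eqn:II}; and tracking the accumulated power of $M$ together with all the signs, so that the outcome matches the prefactor and the stated kernel normalization. Getting the power of $M$ exactly right is the step most prone to error.
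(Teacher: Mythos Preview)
Your proposal is correct and follows essentially the same approach as the paper: both rewrite the Pochhammer symbols and the tail of the product $\prod_j$ as Gamma ratios, split the product at $j=n$ to isolate the finite part that rebuilds $G_\lambda$, and apply the Stirling-type estimate $\Gamma(N+a)/\Gamma(N+b)=N^{a-b}(1+O(N^{-1}))$ to each large-argument quotient. The paper's presentation is slightly more streamlined in that it treats the two pieces \eqref{eqn:pochhammerratio} and \eqref{eqn:Gratio} separately and then multiplies, but the substance is identical.
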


\begin{proof}
We will use the following identity:
\begin{equation} \label{eqn:GammaI}
(y)_m = \frac{\G{y+m}}{\G{y}}, \;\; y \notin \{0,-1,-2, \ldots \}.
\end{equation}
We will also use Stirling's approximation of the Gamma function in the following form:
\begin{equation}
\label{eqn:GammaII}
\frac{\G{y+m}}{(m-1)!} = m^y \,(1+O(m^{-1})), \;\; m \geq 1.
\end{equation}
The big O term is uniform in $m$ so long as $y$ is bounded and bounded away from negative integers.
Using these two properties, if $ m_1 =\lceil t_1(M-1)\rceil$ and $m_2 =\lceil t_2(M-1)\rceil$ then
\begin{equation} \label{eqn:pochhammerratio}
\frac{(z+1)_{M-m_2-1}}{(-w)_{M-m_1}} \cdot \frac{(M-m_1-1)!}{(M-m_2-1)!} =
(1-t_1)^w (1-t_2)^z \frac{\G{-w}}{\G{z+1}} \, (M-1)^{z+w} \;(1+ O(M^{-1}))
\end{equation}

Now consider the term
\begin{align*}
\prod_{j=1}^M \big (w-\nu_{\lm_M}(M,j) \big) &= \prod_{j=1}^n (w-\lm_j+j) \cdot (w+n+1)_{M-n}\\
&= \prod_{j=1}^n (w - \lm_j+j) \cdot \frac{\G{w+M+1}}{\G{w+n+1}}.
\end{align*}
Applying (\ref{eqn:GammaII}) to $\G{w+M+1}$ and $\G{z+M+1}$ gives
\begin{equation*}
\prod_j \, \frac{w-\nu_{\lm_M}(M,j)}{z-\nu_{\lm_M}(M,j)} =
\frac{\G{z+n+1}}{\G{w+n+1}} \cdot \prod_{j=1}^n \frac{w-\lm_j+j}{z- \lm_j+j} \cdot M^{w-z} \, (1+O(M^{-1})).
\end{equation*}
Substituting in $z+x_2$ and $-w + x_1-1$ then gives
\begin{equation} \label{eqn:Gratio}
\prod_j \frac{-w+x_1-1-\nu_{\lm_M}(M,j)}{z+x_2-\nu_{\lm_M}(M,j)} = \frac{G_{\lm}(z+x_2)}{G_{\lm}(x_1-1-w)} \, (M-1)^{-(w+z) +x_1-x_2-1} (1+O(M^{-1})).
\end{equation}
Combining (\ref{eqn:pochhammerratio}) with (\ref{eqn:Gratio}) provides the desired conclusion of the lemma.
\end{proof}

We now prove that $(M-1)^{x_2-x_1+1}K_{M}$ converges to $K_{\lm}$.
Suppose $x_i$ are fixed and $m_i = \lceil t_i(M-1) \rceil$ for $i = 1,2$ and $t_i \in [\delta, 1-\delta]$ for some $\delta > 0$.
Recall that $K_M$ is given in (\ref{eqn:KM}). For all sufficiently large values of $M$,
Lemma \ref{lem:hypergeometric} and then the identity (\ref{eqn:GammaI}) followed by the estimate
(\ref{eqn:GammaII}) show that $(Ia) + (Ib)$ of (\ref{eqn:KM}) equals
\begin{align} \label{eqn:firstterm}
(Ia) + (Ib) & = \ind{m_2 > m_1, x_1 > x_2} \frac{(m_1-m_2+1)_{x_1-x_2-1}}{(x_1-x_2-1)!} \\
\nonumber & = \ind{t_2 > t_1, x_1 < x_2} \frac{(t_1-t_2)^{x_1-x_2-1}}{(x_1-x_2-1)!} \cdot (M-1)^{x_1-x_2-1} (1+ O(M^{-1})).
\end{align}

Now we consider (II) in the form given in (\ref{eqn:II}) following the change of variables.
Lemma \ref{lem:Jasymptotic} implies that as $M \to \infty$,
\begin{equation} \label{eqn:Jconverge}
(M-1)^{x_2-x_1+1}\,J_M(w,x_1,t_1; z, x_2,t_2) \to (1-t_1)^{w} (1-t_2)^{z} \frac{\G{-w} G_{\lm}(z+x_2)}{ \G{z+1} G_{\lm}(x_1-1-w)}.
\end{equation}
The convergence is uniform over compact subsets of $w$ and $z$ so long as $w$ and $z$ are
uniformly bounded away from the integers. For all large values of $M$ the contours of integration of
$(II)$ become free of $M$, namely, $z \in C_z[0,\lm_1-x_2)$ and $w \in C_w[0,n+x_1)$.
The contours may also be arranged such that they remain bounded away from the
integers and $|w+z+x_2-x_1+1| \geq 1/10$ throughout, say. This implies that as $M \to \infty$,
$(M-1)^{x_2-x_1+1} \cdot (II)$ converges to
\begin{align*}
\dint{[0,\lm_1-x_2)}{[0,n+x_1)} (1-t_1)^{w} (1-t_2)^{z} & \frac{\G{-w} G_{\lm}(z+x_2)}{ \G{z+1} G_{\lm}(x_1-1-w)} \, \times \\
  &\times \, \frac{1}{(w+z+x_2-x_1+1)}.
\end{align*}

We have thus concluded that $(M-1)^{x_2-x_1+1} K_M$ converges to the kernel $K_{\lm}$
given in Theorem \ref{thm:pyt}. Moreover, the estimates show that the converge
is uniform over compact subsets of $\Z \times (0,1)$. Indeed, so long at $t_1,t_2 \in [\delta,1-\delta]$
and $|x_1|, |x_2| \leq B$, the error term in the convergence is of order $O_{B,\delta}(M^{-1})$,
by Lemma \ref{lem:Jasymptotic}, because the double contours eventually become free of $M$
and the integrand converges uniformly over the contours. Part (I) of Lemma \ref{lem:dpp} now implies
that the rescaled process of jumps, $\X^{\mathrm{scaled}}_{\lm_M}$, converges weakly on $\Z \times (0,1)$
to a determinantal point process with kernel as given in Theorem \ref{thm:pyt}.
\qed

\section{Bulk local limit of the jumps of Poissonized staircase shaped tableaux} \label{sec:Xedge}

In this section we prove that the point process $X_{\alpha,n}$ from (\ref{eqn:edgeprocess})
converges weakly to the point process $\X_{\rm{edge}}$ from Definition \ref{def:Xedge}.
This is done in a two-step procedure. First, we prove in Theorem \ref{thm:main} that the limit of
$X_{\alpha, n}$ is a determinantal point process whose kernel is given in terms of a double contour integral.
Second, we identifying this kernel with the one from Definition \ref{def:Xedge} in Proposition \ref{prop:simplekernel}.

\begin{thm} \label{thm:main}
The point process $\X_{\alpha, n}$ from (\ref{eqn:edgeprocess}) converges weakly to a
limiting determinantal point process $\X_{\rm{edge}}$ on $\Z \times \R_{\geq 0}$.
The correlation kernel of $\X_{\rm{edge}}$ with respect to reference measure
$\#_{\Z} \otimes \Lb(\R_{\geq 0})$ is given as follows. For $u_1,u_2 \in \R_{\geq 0}$ and $x_1, x_2 \in \Z$,
\begin{align}
\nonumber & K_{\rm{edge}}(x_1,u_1;x_2,u_2) = \ind{u_2 < u_1, x_2 < x_1}\, \frac{(u_2-u_1)^{x_1-x_2-1}}{(x_1-x_2-1)!} \,+\\
 \nonumber & \dint{[0,\infty)}{[0,\infty)} \frac{\pi}{2} \cdot \frac{G(w; x_1,u_1) \, G(z; x_2,u_2)}{w+z +x_2-x_1+1}, \;\;\text{where}\\
\label{eqn:mainG} & G(z; x,u) = \frac{u^z}{\G{z+1} \sin(\frac{\pi}{2}(z+x_2))}.
\end{align}
The contours $C_z[0,\infty)$ and $C_w[0,\infty)$ are unbounded, contain the non-negative integers
but remain uniformly bounded away from them and are arranged such that $w+z+x_2-x_1+1$ remains
uniformly bounded away from 0. They may also be arranged such that their imaginary parts remain
bounded and $C_z$ contains $C_w$.

The value of $K_{\rm{edge}}$ when $u_1$ or $u_2$ equals 0 is to be understood in
the sense of the limit as $u_1$ or $u_2$ tends to 0.
\end{thm}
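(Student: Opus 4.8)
The plan is to obtain $K_{\rm edge}$ as a scaling limit of the Poissonized--tableaux kernel $K_{\sst}$ of Theorem \ref{thm:pyt}, specialized to the staircase $\lm=\sst=(n-1,n-2,\dots,1)$, via both parts of Lemma \ref{lem:dpp} and a gauge transformation. For the staircase $\lm_i=n-i$, so $u-\lm_i+i=u-n+2i$, and using $\prod_{i=1}^{n-1}(a+i)=\G{a+n}/\G{a+1}$ with $a=(u-n)/2$,
\[
G_{\sst}(u)=\frac{\G{u+n}}{\prod_{i=1}^{n-1}(u-n+2i)}=\frac{\G{u+n}\,\G{\frac{u-n+2}{2}}}{2^{\,n-1}\,\G{\frac{u+n}{2}}}.
\]
By the remark following Theorem \ref{thm:pyt}, for $n$ large enough that $u_j/(\sqrt{1-\alpha^2}\,n)<1$ on the relevant compact range of the $u_j$, the contours in $K_{\sst}$ may be taken to be the unbounded ones $C_z[0,\infty)$, $C_w[0,\infty)$ with bounded imaginary parts (along which the factors $(1-t_j)^{\,\cdot}$ then decay). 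Apply Lemma \ref{lem:dpp}(II) with $\beta=\sqrt{1-\alpha^2}$ and the centering $c_n$ from \eqref{eqn:edgeprocess}, followed by the gauge transformation $g(x)=(\beta n)^{x}$ of Remark \ref{rem:dpp}: it then suffices to show that
\[
\tilde k_n(x_1,u_1;x_2,u_2):=(\beta n)^{x_1-x_2-1}\,K_{\sst}\!\Big(x_1+c_n,\,1-\tfrac{u_1}{\beta n};\;x_2+c_n,\,1-\tfrac{u_2}{\beta n}\Big)
\]
converges to $K_{\rm edge}$ uniformly on compact subsets of $\Z\times\R_{>0}$; Lemma \ref{lem:dpp}(II) then delivers the asserted weak convergence of $\X_{\alpha,n}$.

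\textbf{The asymptotics.} The non-integral part of $K_{\sst}$ contributes $\ind{t_2>t_1,\,x_1>x_2}\,(t_1-t_2)^{x_1-x_2-1}/(x_1-x_2-1)!$; substituting $t_j=1-u_j/(\beta n)$ and multiplying by $(\beta n)^{x_1-x_2-1}$ gives \emph{exactly} $\ind{u_2<u_1,\,x_2<x_1}\,(u_2-u_1)^{x_1-x_2-1}/(x_1-x_2-1)!$, the first term of $K_{\rm edge}$. In the double integral the shift $c_n$ cancels out of the pole $w+z+x_2-x_1+1$, so the integrand is $\frac{\G{-w}}{\G{z+1}}\cdot\frac{G_{\sst}(v_2+c_n)}{G_{\sst}(v_1+c_n)}\cdot\frac{u_1^{w}u_2^{z}\,(\beta n)^{-w-z}}{w+z+x_2-x_1+1}$ with $v_2=z+x_2$, $v_1=x_1-1-w$. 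The core estimate is that, for $w,z$ bounded and at distance $\ge1/10$ from the integers,
\[
\frac{G_{\sst}(v_2+c_n)}{G_{\sst}(v_1+c_n)}=\frac{\sin\!\big(\tfrac{\pi}{2}v_1\big)}{\sin\!\big(\tfrac{\pi}{2}v_2\big)}\,(\beta n)^{\,v_2-v_1}\,\big(1+O(n^{-1})\big).
\]
This follows by applying Stirling's formula to the two ``growing'' Gamma ratios $\G{v_2+c_n+n}/\G{v_1+c_n+n}$ and $\G{\tfrac{v_1+c_n+n}{2}}/\G{\tfrac{v_2+c_n+n}{2}}$, and the reflection formula $\G{s}\G{1-s}=\pi/\sin(\pi s)$ to the ``shrinking'' ratio $\G{\tfrac{v_2+c_n-n+2}{2}}/\G{\tfrac{v_1+c_n-n+2}{2}}$ (whose arguments tend to $-\infty$); the powers of $n$ combine as $((1+\alpha)n)^{v_2-v_1}\big(\tfrac{1-\alpha}{1+\alpha}\big)^{(v_2-v_1)/2}=(\beta n)^{v_2-v_1}$. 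Here the hypothesis that $c_n$ has the same parity as $n$ is used \emph{crucially}: it makes $(c_n-n)/2$ an integer, so the reflection formula yields $\sin(\tfrac{\pi}{2}v_j)$ up to the $n$-dependent sign $(-1)^{(c_n-n)/2}$, which cancels between $v_1$ and $v_2$. Plugging in, all powers of $\beta n$ cancel ($(x_1-x_2-1)+(-w-z)+(v_2-v_1)=0$); rewriting $\G{-w}=-\pi/(\sin(\pi w)\G{w+1})$ and using the identity $\sin(\tfrac{\pi}{2}(x_1-1-w))/\sin(\pi w)=-\tfrac12/\sin(\tfrac{\pi}{2}(w+x_1))$ then shows the scaled integrand converges pointwise to $\tfrac{\pi}{2}\,G(w;x_1,u_1)\,G(z;x_2,u_2)/(w+z+x_2-x_1+1)$, the integrand of \eqref{eqn:mainG}.

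\textbf{The main obstacle.} The principal difficulty is to upgrade this pointwise-in-$(z,w)$ convergence to one permitting passage to the limit inside the \emph{unbounded} contour integrals, uniformly for $x_1,x_2$ and $u_1,u_2$ in compact sets. This requires uniform (non-asymptotic) Stirling bounds valid all along the contours, together with an $n$-independent integrable majorant. Decay along the arms going to $+\infty$ is provided by $1/|\G{z+1}|$ and $1/|\G{w+1}|$, which dominate $|u_j/(\beta n)|^{\,\cdot}$ (of modulus $<1$ for large $n$); meanwhile $1/|\sin(\tfrac{\pi}{2}(z+x_2))|$, $1/|\sin(\tfrac{\pi}{2}(w+x_1))|$ and $1/|w+z+x_2-x_1+1|$ stay bounded since the contours keep distance $\ge1/10$ from the integers and can be arranged so $w+z+x_2-x_1+1$ is bounded away from $0$. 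One must also check that the error term in the $G_{\sst}$-ratio estimate is uniform along the entire contour (not just on compacts) --- routine but delicate bookkeeping with Stirling's formula. Granting these estimates, dominated convergence gives $\tilde k_n\to K_{\rm edge}$ uniformly on compacts of $\Z\times\R_{>0}$, and Lemma \ref{lem:dpp}(II) yields weak convergence of $\X_{\alpha,n}$ (restricted to $\Z\times\R_{>0}$) to the determinantal process with kernel $K_{\rm edge}$.

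\textbf{The boundary.} To promote convergence to $\Z\times\R_{\ge0}$ one verifies the non-accumulation condition of Lemma \ref{lem:dpp}(II). Collapsing the $w$-contour onto the residues at the non-negative integers turns the double integral, for fixed $n$, into a power series in $u_1,u_2$ whose coefficients converge as $n\to\infty$; in particular the convergence $\tilde k_n\to K_{\rm edge}$ and the boundedness of the diagonal extend to a neighbourhood of $u_1=u_2=0$, so $\tilde k_n(x,u;x,u)\le C_x$ for $u\in[0,\eps_0]$ and all large $n$. Hence
\[
\pr{\X_{\alpha,n}\cap\big(\{x\}\times[0,\eps]\big)\neq\emptyset}\le \E{\#\big(\X_{\alpha,n}\cap(\{x\}\times[0,\eps])\big)}=\int_0^{\eps}\tilde k_n(x,u;x,u)\,du\le C_x\,\eps,
\]
which tends to $0$ as $\eps\to0$ after taking $\limsup_{n}$. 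Together with the previous paragraph, Lemma \ref{lem:dpp}(II) then gives $\X_{\alpha,n}\Rightarrow\X_{\rm edge}$ on $\Z\times\R_{\ge0}$, completing the proof.
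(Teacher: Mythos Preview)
Your proposal is correct and follows essentially the same route as the paper: gauge-transform $K_{\sst}$ by $(\beta n)^{x}$, compute the asymptotics of the ratio $G_{\sst}(v_2+c_n)/G_{\sst}(v_1+c_n)$ via Stirling on the growing Gamma factors and the reflection formula on the shrinking one (exploiting that $n-c_n$ is even), then rewrite $\Gamma(-w)$ via reflection to reach the $G(z;x,u)$ form, and finally verify the non-accumulation hypothesis of Lemma~\ref{lem:dpp}(II). The only substantive differences are that the paper works out the uniform Stirling bounds for dominated convergence explicitly rather than declaring them ``routine but delicate,'' and handles the boundary by integrating $\int_0^\eps K_n(x,t;x,t)\,dt$ directly (picking up an extra $1/(w+z+1)$ in the contour integral) instead of your residue/power-series expansion; both approaches lead to the same conclusion.
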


\begin{proof}
The proof proceeds in two steps, each verifying the conditions of part (II) of Lemma \ref{lem:dpp}.
First, we will show that the correlation kernel of $\X_{\alpha,n}$ converges to $K_{\rm{edge}}$
uniformly on compact subsets of $x_1,x_2 \in \Z$ and $u_1,u_2 \in \R_{> 0}$. Then we
will argue that points of $\X_{\alpha,n}$ do not accumulate on the boundary $\Z \times \{0\}$
as $n \to \infty$.

Let $\beta = \sqrt{1-\alpha^2}$. Part (II) of Lemma \ref{lem:dpp} and Theorem \ref{thm:pyt}
imply that the correlation kernel of $\X_{\alpha,n}$ with reference measure $\#_{\Z} \otimes \Lb(\R_{\geq 0})$ is
$$K_n(x_1,u_1;x_2,u_2) = (\beta n)^{-1} K_{\sst} \left(x_1 + c_n, 1- \frac{u_1}{\beta n};
x_1 + c_n, 1- \frac{u_1}{\beta n} \right ).$$
The kernel $(\beta n)^{x_1-x_2} K_n$ determines the same point process by Remark \ref{rem:dpp}.
Using part (II) of Lemma \ref{lem:dpp} it suffices to show that $(\beta n)^{x_1-x_2} K_n$
converges uniformly over compact subsets of $\Z \times \R_{> 0}$ to $K_{\rm{edge}}$ as $n \to \infty$
in order to deduce convergence of $\X_{\alpha,n}$ to $\X_{\rm{edge}}$ on $\Z \times \R_{>0}$.

Let $G_n$ denote the function $2^{n-1}G_{\sst}$, where $G_{\sst}$ is as in Theorem \ref{thm:pyt}. Then
\begin{equation} \label{eqn:Gsst}
G_n(z) = \frac{2^{n-1} \G{z+n+1}}{\prod_{j=1}^{n-1} (z-n + 2j)} = \frac{\G{z+n+1}}{(\frac{z-n}{2}+1)_{n-1}}
= \frac{\G{z+n+1} \G{\frac{z-n+2}{2}} }{\G{\frac{z+n}{2}}}.
\end{equation}
Substitute in $x_i +c_n$ for the variables $x_i$ and $1 - (\beta n)^{-1}u_i$ for the variables $t_i$ in $K_{\sst}$.
Then,
\begin{align} \label{eqn:thmmain0}
&K_n(x_1,u_1;x_2,u_2) = \ind{u_2 < u_1, x_2 < x_1}\,\frac{(u_2-u_1)^{x_1-x_2-1}}{(x_1-x_2-1)!} \,(\beta n)^{x_2-x_1}\,+\\
\nonumber & \dint{[0,n-1-c_n-x_2)}{[0,n-1+c_n+x_1)} \frac{\G{-w} G_n(z+x_2+c_n) \, u_1^w \, u_2^z \, (\beta n)^{-w-z-1}}
{\G{z+1} G_n(x_1-1+c_n-w) (w+z+x_2-x_1+1)}.
\end{align}
Using the formula for $G_n$ from (\ref{eqn:Gsst}) and applying the identity
\begin{align} \label{eqn:thmmain1}
\G{1-y}\G{y} &= \frac{\pi}{\sin(\pi y)}, \;y \notin \{0,-1,-2,\ldots\}, \;\text{to}\;\; y = \frac{n-c_n-z}{2}\;\; \text{gives} \\
\label{eqn:thmmain2}
G_n(z+c_n) &= \frac{\G{z+n+c_n+1}}{\G{\frac{n+c_n+z}{2}}\G{\frac{n-c_n-z}{2}}} \cdot \frac{\pi} {\sin \left (\frac{\pi}{2}(n-c_n-z) \right)}.
\end{align}

The estimate for $\G{z}$ from (\ref{eqn:GammaII}) along with the observation that
$n \pm c_n = (1 \pm \alpha)n  + O(1)  \to +\infty$ implies the following asymptotic bevaviour as $n \to \infty$.
The symbol $\sim$ denotes a multiplicative term $1+O(n^{-1})$ where the big O error is uniform over
$z$ in compact subsets of $\C \setminus \Z$.
\begin{align} \label{eqn:thmmain3}
\G{\frac{n \pm(c_n+z)}{2}} & \sim \G{\frac{n \pm c_n}{2}} \cdot \left( \frac{n \pm c_n}{2}\right)^{\pm \frac{z}{2}}, \\
\label{eqn:thmmain4} \G{n+c_n+z+1} & \sim \G{n+c_n} \cdot (n+c_n)^{z+1}.
\end{align}
Therefore,
\begin{align*}
\frac{\G{n+c_n+z+1}}{\G{\frac{n+c_n+z}{2}}\G{\frac{n-c_n-z}{2}}} \cdot
\frac{\G{\frac{n+c_n+w}{2}}\G{\frac{n-c_n-w}{2}}}{\G{n+c_n+w+1}} & \sim
\left(\frac{n+c_n}{n-c_n}\right)^{\frac{w-z}{2}} (1+\alpha)^{z-w} \; n^{z-w} \\
& \sim \left( \frac{1-\alpha}{1+\alpha} \right)^{\frac{z-w}{2}} (1+\alpha)^{z-w} \; n^{z-w} \\
& = (\beta n)^{z-w}.
\end{align*}
Substituting in $z+x_2$ for $z$ and $-w+x_1-1$ for $w$ in this estimate gives
\begin{equation} \label{eqn:Gnestimate}
\frac{G_n(z+x_2+c_n)}{G_n(-w+x_1-1+c_n)} \sim
 \frac{\sin \left (\frac{\pi}{2}(-w+x_1-1-n-c_n) \right)}{\sin \left (\frac{\pi}{2}(z+x_2-n-c_n) \right)}
 \left (\beta n \right)^{w+z + x_2-x_1+1}.
\end{equation}
The error in the above estimate vanishes as $n \to \infty$ so long as $w, z$ and the $x_i$ lie
in compact subsets of their respective domains. We also have that
$$\frac{\sin \left( \frac{\pi}{2}(n-c_n-w) \right)}{\sin \left(\frac{\pi}{2}(n-c_n-w)\right)} =
\frac{\sin \left(\frac{\pi}{2} w \right)}{\sin \left(\frac{\pi}{2} z \right)} \quad \text{if}\;\; n-c_n \;\;\text{is even}.$$
Since $n-c_n$ is assumed to be even we conclude from (\ref{eqn:Gnestimate}) that so long
$w$ and $z$ are bounded and remain uniformly bounded away from the integers then
\begin{equation} \label{eqn:thmmain5}
\lim_{n \to \infty} \; (\beta n)^{x_1-x_2+1} \, \frac{G_n(z+x_2+c_n) (\beta n)^{-w-z-1}}{G_n(x_1+c_n-1-w)} =
\frac{-\sin \left (\frac{\pi}{2} (w-x_1+1) \right)}{\sin \left (\frac{\pi}{2} (z+x_2)\right)}.
\end{equation}

The above displays the pointwise limit of the part of the integrand from (\ref{eqn:thmmain0}) that depends on $n$.
In order to interchange the pointwise limit with the contour integral we must show
that the integrand is bounded uniformly over $n$ by a function that is integrable over the
contours $z \in C_z[0,n-1-c_n-x_2)$ and $w \in C_w[0,n-1+c_n+x_1)$, also uniformly over $n$.
Then we may apply the dominated convergence theorem.

Towards this end suppose $z$ is such that (1) $|\Re(z)| \leq 2n$, (2) $|\Im(z)|$ is uniformly
bounded over $n$, say by 100, and (3) $z$ remains bounded away from $\Z$ by distance at least
$1/10$. In this case Stirling approximation to the Gamma function implies that modulus of
the ratio of the l.h.s.~of (\ref{eqn:thmmain3}) to its r.h.s.~is bounded above and below by exponential factors
of uniform rate in $|\Re(z)|$. The same holds for the ratio of the l.h.s.~of (\ref{eqn:thmmain4}) to its r.h.s.
That is, for some constant $C$,
\begin{align*}
e^{-C \,(|\Re(z)| + 1)} \leq \left | \dfrac{\G{\frac{n \pm(c_n+z)}{2}}}
{\G{\frac{n \pm c_n}{2}} \cdot \left( \frac{n \pm c_n}{2}\right)^{\pm \frac{z}{2}}} \right| \leq e^{C \,(|\Re(z)| + 1)}, \\
\\
e^{-C \,(|\Re(z)| + 1)} \leq \left | \dfrac{\G{n+c_n+z+1}}{\G{n+c_n} \cdot (n+c_n)^{z+1}} \right |\leq e^{C \,(|\Re(z)| + 1)}.
\end{align*}

Throughout the following $C$ denotes a constant that is free of $n$ but its value may change
from line to line. We combine the estimates above with the equation for $G_n(z+c_n)$ from (\ref{eqn:thmmain2})
and observe that there is a $C$ such that $1/C \leq |\sin(n-c_n-z)| \leq C$ due to the assumptions
on $z$. This in turn implies that there is a $C$ such that
\begin{equation*}
e^{-C \, (|\Re(z)| +1)} \leq \left | \frac{G_n(z+c_n)}{(\beta n)^z} \right| \leq e^{C \, (|\Re(z)| + 1)}.
\end{equation*}

The contours $C_z[0,n-1-c_n-x_2)$ and $C_w[0,n-1+c_n+x_1)$ can certainly be arranged
such that for fixed $x_1$ and $x_2$ the variables $z+x_2$ and $-w+x_1-1$ satisfy the
aforementioned assumptions (1)--(3) uniformly over $n$. Thus, we get the following
uniform estimate over $n$ with $z \in C_z[0,n-1-c_n-x_2)$ and $w \in C_w[0,n-1+c_n+x_1)$:
\begin{equation*}
\left | \frac{G_n(z+x_2 + c_n)}{G_n(-w+x_1-1+c_n)} \right| \leq (\beta n)^{\Re(z+w) +x_2-x_1+1}\, e^{C \,(|\Re(z)| + |\Re(w)| +1)}.
\end{equation*}
The contours may also be arranged such that $|w+z+x_2-x_1+1| \geq 1/10$, say. Then the modulus
of the integrand of the double contour integral from (\ref{eqn:thmmain0}) satisfies
\begin{align} \label{eqn:thmmain6}
& (\beta n)^{x_1-x_2} \left | \frac{\G{-w} G_n(z+x_2+c_n) \, u_1^w \, u_2^z \, (\beta n)^{-w-z-1}}
{\G{z+1} G_n(x_1-1+c_n-w) \,(w+z+x_2-x_1+1)} \right| \leq \\
\nonumber & \qquad \qquad \, \left |\frac{\G{-w}}{\G{z+1}} \right | \, |u_1^w| \, |u_2^z| \, e^{C \, (|\Re(z)| + |\Re(w)| +1)}.
\end{align}

Stirling's approximation implies that if $\Re(z) \geq 1/10$ and $|\Im(z)|$ remains bounded then
\begin{equation} \label{eqn:thmmain7}
\frac{|u^z|}{|\G{z+1}|} = e^{-\Re(z) \log \Re(z) + \Re(z)(\log u + O(1))}.
\end{equation}
Applying (\ref{eqn:thmmain1}) with $y = -w$ also gives $\G{-w} = - \pi [\G{w+1} \sin(\pi w)]^{-1}$.
Note that $|\sin (\pi w)|^{-1} \leq C$ so long as $w$ remains uniformly bounded away from the integers.
Combining this with (\ref{eqn:thmmain7}) shows that the r.h.s.~of (\ref{eqn:thmmain6}) is
integrable over unbounded double contours $C_z[0,\infty) \ni z$ and $C_w[0,\infty) \ni w$
as long as the contours are arranged such that $z,w$ remain uniformly bounded away from the
integers, have uniformly bounded imaginary parts, and $z+w+x_2-x_1+1$ remains uniformly bounded
away from 0. Thus, the limit (\ref{eqn:thmmain5}), upper bound (\ref{eqn:thmmain6}) and the
dominated convergence theorem implies that as $n \to \infty$
\begin{align*}
& (\beta n)^{x_1-x_2}\, K_n(x_1,u_1; x_2,u_2) \to
\ind{u_2 < u_1, x_2 < x_1}\,\frac{(u_2-u_1)^{x_1-x_2-1}}{(x_1-x_2-1)!} \,+ \\
& \dint{[0,\infty)}{[0,\infty)} u_1^w u_2^z \cdot \frac{-\G{-w} \sin \left (\frac{\pi}{2} (w-x_1+1) \right)}
{\G{z+1} \sin \left (\frac{\pi}{2} (z+x_2) \right)} \cdot \frac{1}{w+z+x_2-x_1+1}.
\end{align*}

Furthermore, our estimates show that the convergence is uniform over $x_i$ in compact subsets
of $\Z$ and $u_i$ in compact subsets of $\R_{> 0}$. (In fact, when some $u_i \to 0$ the integral
contributes only through residues at the origin. Lemma \ref{lem:tlimit} computes the limit as $u_i \to 0$.)
Comparing the limit integrand with the one presented in (\ref{eqn:mainG})
we observe that the proof of the kernel convergence will be complete once it is shown that
$$- \G{-w} \sin \left (\frac{\pi}{2} (w-x_1+1) \right) = \frac{(\pi/2)}{\G{w+1} \sin \left (\frac{\pi}{2} (w+x_1) \right) }.$$
From (\ref{eqn:thmmain1}), $-\G{-w} = \pi [\G{w+1} \sin(\pi w)]^{-1}$. Also, $\sin ( \frac{\pi}{2}(w-x+1)) = \cos(\frac{\pi}{2}(w-x))$.
Finally, double angle trigonometric formulae imply $\sin(\pi w) = 2 \sin( \frac{\pi}{2}(w+x)) \cos( \frac{\pi}{2}(w-x))$.
Substituting these equations into the l.h.s.~of the above verifies the equality with the r.h.s.

To complete the proof of convergence of $\X_{\alpha,n}$ to $\X_{\rm{edge}}$ on $\Z \times \R_{\geq 0}$
it is enough to show, using Lemma \ref{lem:dpp}, that for every $x \in \Z$,
\begin{equation} \label{eqn:thmmain8}
\lim_{\eps \to 0}\, \limsup_{n \to \infty}\, \E{ \# \, \X_{\alpha,n} \cap (\{x\}\times [0,\eps])} = 0.
\end{equation}
In other words, points do not accumulate at the boundary in the limit.
From relation (\ref{eqn:dpp}) for determinantal point processes and (\ref{eqn:thmmain0}) we get
\begin{align*}
&\E{ \# \, \X_{\alpha,n} \cap (\{x\} \times [0,\eps])} = \int_0^{\eps} K_n(x,t;x,t) \,dt \\
& = \dint{[0,n-1-c_n-x)}{[0,n-1+c_n+x)} \frac{\G{-w} G_n(z+x+c_n) \, \big (\int_0^{\eps} t^{w+z} \, dt \big) \, (\beta n)^{-w-z-1}}
{\G{z+1} G_n(x-1+c_n-w) (w+z+1)} \\
&= \dint{[0,n-1-c_n-x)}{[0,n-1+c_n+x)} \frac{\G{-w} G_n(z+x+c_n) \, \eps^{w+z+1} \, (\beta n)^{-w-z-1}}
{\G{z+1} G_n(x-1+c_n-w) (w+z+1)^2}.
\end{align*}

The quantity above is of the form $\eps I_{\eps,n}$. Arguing exactly as in the derivation of the limit kernel,
$I_{\eps,n} \to I_{\eps}$, where $I_{\eps}$ is given by the double contour integral in the definition of $K_{\rm{edge}}(x,\eps;x,\eps)$
from (\ref{eqn:mainG}) but with an additional factor of $w+z+1$ in the denominator of the integrand. Indeed,
$\eps I_{\eps} = \int_0^{\eps} K_{\rm{edge}}(x,t;x,t)\,dt$, which is the expected number of points of $\X_{\rm{edge}}$
on $\{x\} \times [0,\eps]$. The quantity $I_{\eps}$ remains uniformly bounded near $\eps = 0$ since, as
$\eps \to 0$, the  contribution to the integral that defines $I_{\eps}$ comes from the residues at $w,z = 0$
and these residues do not depend on $\eps$. (See Lemma \ref{lem:tlimit} where $\lim_{\eps \to 0} K_{\rm{edge}}(x,\eps;x,\eps)$
is derived analogously.) Consequently, $\eps I_{\eps} \to 0$ as $\eps \to 0$ and
thus the condition from (\ref{eqn:thmmain8}) holds.
\end{proof}

\subsection{Integral representation of the edge kernel} \label{sec:integralrep}
We begin with an auxiliary lemma.

\begin{lemma} \label{lem:tlimit}
Let $C_w [0,\infty)$ and $C_z [0,\infty)$ be contours as in the statement of
Theorem \ref{thm:main} and let $G(z;x,u)$ be as in (\ref{eqn:mainG}). For $t > 0$ let
$$I(t) = \dint{[0,\infty)}{[0,\infty)} \frac{\pi}{2} \cdot \frac{G(w; x_1,tu_1) \, G(z; x_2,tu_2))}{w+z +x_2-x_1+1}.$$
Then,
\begin{equation*}\lim_{t \to 0} I(t) = \begin{cases}
\frac{2}{\pi}\, \frac{\cos (\frac{\pi}{2}x_1) \cos(\frac{\pi}{2} x_2) }{x_2-x_1+1} & \text{if}\;\; x_2 \neq x_1-1\\
-\ind{x_1 \,\text{even}} & \text{if}\;\; x_2 = x_1-1.
\end{cases}
\end{equation*}
Moreover, $I$ is continuously differentiable on $\R_{> 0}$ and can be differentiated by
interchanging differentiation with the contour integration.
\end{lemma}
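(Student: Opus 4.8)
The plan is to substitute the explicit formula \eqref{eqn:mainG} for $G$ and to write the integrand of $I(t)$ as $t^{w+z}\,H(w,z)$, where
$$
H(w,z)=\frac{\pi}{2}\cdot\frac{u_1^{\,w}\,u_2^{\,z}}{\G{w+1}\,\G{z+1}\,\sin\!\big(\tfrac{\pi}{2}(w+x_1)\big)\,\sin\!\big(\tfrac{\pi}{2}(z+x_2)\big)\,(w+z+x_2-x_1+1)}.
$$
By Stirling's formula the reciprocal Gamma factors make $|t^{w+z}H(w,z)|$ decay faster than any geometric rate as $\Re w,\Re z\to+\infty$ along the contours, uniformly for $t$ in a bounded interval and for fixed $x_1,x_2$; this simultaneously legitimizes the residue expansions and the dominated-convergence arguments used below. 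Throughout, an iterated residue taken at a point which is not a pole is read as $0$.

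First I would establish the limit. Since $C_w[0,\infty)$ encircles exactly the non-negative integers and, by the contour arrangement of Theorem \ref{thm:main}, the pole at $w=x_1-x_2-1-z$ lies outside it, the inner $w$-integral equals $\sum_{j\ge 0}\operatorname{Res}_{w=j}\big[t^{w+z}H(w,z)\big]=t^{z}\sum_{j\ge 0}t^{j}\operatorname{Res}_{w=j}H(w,z)$, an absolutely convergent series whose $j\ge 1$ part is $O(t)$, uniformly on $C_z$. Integrating the $j=0$ term over $C_z$ and expanding that integral likewise into residues at $z=0,1,2,\dots$ (its pole at $z=0$ coming either from the zero of $\sin(\tfrac{\pi}{2}(z+x_2))$ or, when $x_2=x_1-1$, from the factor $(z+x_2-x_1+1)^{-1}=z^{-1}$) gives
$$
I(t)=\operatorname{Res}_{z=0}\operatorname{Res}_{w=0}H(w,z)\;+\;O(t)\qquad(t\to 0),
$$
so that $\lim_{t\to 0}I(t)=\operatorname{Res}_{z=0}\operatorname{Res}_{w=0}H(w,z)$.

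Next I would evaluate this iterated residue using $\operatorname{Res}_{w=0}\big[\sin(\tfrac{\pi}{2}(w+x_1))\big]^{-1}=\tfrac{2}{\pi}\cos(\tfrac{\pi}{2}x_1)$ (which is $0$ unless $x_1$ is even) and the analogous identity in $z$. This yields
$$
\operatorname{Res}_{w=0}H(w,z)=\cos\!\big(\tfrac{\pi}{2}x_1\big)\cdot\frac{u_2^{\,z}}{\G{z+1}\,\sin(\tfrac{\pi}{2}(z+x_2))\,(z+x_2-x_1+1)}.
$$
If $x_2\neq x_1-1$, the only pole at $z=0$ of the right-hand side comes from $\sin(\tfrac{\pi}{2}(z+x_2))$, and its residue produces $\tfrac{2}{\pi}\cos(\tfrac{\pi}{2}x_1)\cos(\tfrac{\pi}{2}x_2)\,/\,(x_2-x_1+1)$, which is the asserted value and is automatically $0$ when $x_1$ or $x_2$ is odd. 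If $x_2=x_1-1$, then $z+x_2-x_1+1=z$, so $\operatorname{Res}_{w=0}H$ has a simple pole at $z=0$ coming from that factor and $\operatorname{Res}_{z=0}\operatorname{Res}_{w=0}H=\cos(\tfrac{\pi}{2}x_1)/\sin(\tfrac{\pi}{2}(x_1-1))$; since $\sin(\tfrac{\pi}{2}(x_1-1))=-\cos(\tfrac{\pi}{2}x_1)$, this equals $-\ind{x_1 \,\text{even}}$, as claimed.

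For the last assertion, continuous differentiability of $I$ on $\R_{>0}$ together with the interchange of $\partial_t$ and the contour integration follows from the dominated form of the Leibniz rule: $\partial_t$ of the integrand is $\tfrac{w+z}{t}$ times the integrand, and the Stirling decay of $1/\G{w+1},\,1/\G{z+1}$ dominates this polynomial factor uniformly for $t$ in compact subsets of $\R_{>0}$; continuity of $I'$ follows in the same way. The main obstacle is the degenerate case $x_2=x_1-1$: there the constant $x_2-x_1+1$ in the denominator of $H$ vanishes, so one must verify that the arrangement of Theorem \ref{thm:main} is still compatible with expanding $C_w$ into residues only at the non-negative integers, and carefully follow how $(w+z)^{-1}$ creates a fresh simple pole at $z=0$ after the residue at $w=0$ is extracted; keeping the trigonometric signs straight through this two-stage residue is precisely what turns the naive $\tfrac{0}{0}$ into the discrete value $-\ind{x_1 \,\text{even}}$.
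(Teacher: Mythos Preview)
Your proof is correct and follows essentially the same strategy as the paper: both reduce $\lim_{t\to 0}I(t)$ to the iterated residue $\operatorname{Res}_{z=0}\operatorname{Res}_{w=0}H(w,z)$ and then evaluate it case by case, and both justify differentiation under the integral by the $1/\Gamma$ decay. The only organizational difference is that the paper first splits each contour as $C[0]\cup C[1,\infty)$ and kills three of the four resulting pieces via $\Re(w+z)>0$, whereas you expand the full $w$- and $z$-integrals as residue sums over all nonnegative integers and kill the $j\ge 1$ terms as $O(t)$; these are equivalent bookkeeping devices.

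One small point to tighten: your assertion that ``by the contour arrangement of Theorem~\ref{thm:main}, the pole at $w=x_1-x_2-1-z$ lies outside $C_w[0,\infty)$'' is not an immediate consequence of the condition that $w+z+x_2-x_1+1$ stay bounded away from $0$ (that only says the pole is off the contour, not on which side). The paper sidesteps this by shrinking to $C_w[0]\subset C_z[0]$, where the geometry is transparent. In your framework it is harmless anyway: if that pole were inside $C_w$, its residue would carry a factor $t^{x_1-x_2-1}$, which is $O(t)$ whenever $x_2\le x_1-2$ and hence does not affect the limit; the remaining case $x_2=x_1-1$ you already single out and treat correctly.
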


\begin{proof}
The integrand of $I$ is continuously differentiable in $t$. Observe that the contours
of integration contain no singularities of the integrand, and in fact, are
arranged to be a positive distance from all zeroes of $\sin(\frac{\pi}{2}(z+x_2))$ and
$\sin(\frac{\pi}{2}(w+x_1))$ in the denominator. The estimate for $|u^z| / |\G{z+1}|$
from (\ref{eqn:thmmain7}) shows that the derivative of the integrand in the variable $t$ is
absolutely integrable over the contours as long as $t$ lies in a compact subset of $\R_{\geq 0}$.
Consequently, by the dominated convergence theorem, $I$ is continuously differentiable
and the derivative may be interchanged with integration.

Let us now consider the limiting value of $I(t)$ as $t \to 0$. Decomposing
$C_w$ as $C_w[0,1) \cup C_w[1,\infty)$, and similarly for $C_z$, gives
\begin{equation*}
\oint \limits_{C_z} \oint \limits_{C_w} = \oint \limits_{C_z[0]} \oint \limits_{C_w[0]} +
\oint \limits_{C_z[0]} \oint \limits_{C_w[1,\infty]}  + \oint \limits_{C_z[1,\infty]} \oint \limits_{C_w[0]} +
\oint \limits_{C_z[1, \infty]} \oint \limits_{C_w[1,\infty]}.
\end{equation*}
These four contours may also be arranged such that $\Re(w+z) > 0$ unless both
$w \in C_w[0]$ and $z \in C_z[0]$. Recall that
$$G(z; x,u) = \frac{u^z}{\G{z+1} \sin (\frac{\pi}{2}(z+x))}.$$
Thus, the integrand of $I(t)$ converges to 0 as $t \to 0$ so long as $w \notin C_w[0]$ and $z \notin C_z[0]$.
So each of the double contour integrals above except for the first has a limit
value of 0 as $t \to 0$ (the limit operation may be interchanged with integration as argued above).
To complete the proof it suffices to calculate
\begin{equation} \label{eqn:tlimit}
\lim_{t \to 0}  \, \dint{[0]}{[0]} \frac{\pi}{2} \cdot \frac{G(w; x_1,tu_1) \, G(z; x_2,tu_2))}{w+z +x_2-x_1+1}.
\end{equation}

The integral above is evaluated via residues at $w=0$ and $z=0$.
If $x_2 \neq x_1-1$ then (\ref{eqn:tlimit}) equals
\begin{equation*}
\mathrm{Res}_{z=0} \left( \mathrm{Res}_{w=0} \left ( \frac{(\pi/2)\, G(w;x_1,tu_1)\,G(z;x_2,tu_2)}{w+z+x_2-x_1+1}\right) \right)
= \frac{2}{\pi} \frac{\cos (\frac{\pi}{2}x_2)\cos (\frac{\pi}{2}x_1)}{x_2-x_1+1}.
\end{equation*}
This is the limit value of $I(t)$ in the statement of the lemma for $x_2 \neq x_1-1$.

Now consider the limit (\ref{eqn:tlimit}) in the case $x_2 = x_1-1$.
As the contour $C_{w}[0]$ can be arranged to be contained inside $C_z[0]$, the integral in $w$ equals the
residue of the integrand at the only possible pole at $w = 0$. This equals
$$ \mathrm{Res}_{w=0} \left (\frac{(\pi/2) \, G(w;x_1,tu_1)\,G(z;x_2,tu_2)}{w+z} \right) =
\frac{\cos(\frac{\pi}{2} x_1) (tu_2)^z}{z\G{z+1} \sin(\frac{\pi}{2}(z+x_2))}.$$
If $x_1$ is odd then the above equals 0. Otherwise, $\cos( (\pi/2)x_1) = (-1)^{x_1/2}$ and
the integral of the above over $C_z[0]$ is given by its residue at the pole $z=0$
(note that $\sin(\frac{\pi}{2} x_2) \neq 0$ since $x_2 = x_1-1$ is odd). The residue equals
$$\mathrm{Res}_{z=0} \left (\frac{\cos(\frac{\pi}{2} x_1) (tu_2)^z}{z\G{z+1} \sin(\frac{\pi}{2}(z+x_2))} \right) =
\frac{\cos(\frac{\pi}{2}x_1)}{\sin(\frac{\pi}{2}x_2)} = -1.$$
Thus, if $x_2 = x_2-1$ then (\ref{eqn:tlimit}) equals $- \ind{x_1 \,\text{even}}$ and this completes the proof.
\end{proof}

\begin{prop} \label{prop:simplekernel}
The kernel $K_{\rm{edge}}$ has the following form.
\begin{align*}
&K_{\rm{edge}}(x_1,u_1;x_2,u_2) =
\begin{cases}
\displaystyle \frac{2}{\pi}  \int \limits_1^0 t^{x_2-x_1}
\cos \left( tu_1 + \frac{\pi}{2} x_1\right) \cos \left( tu_2 + \frac{\pi}{2}x_2 \right )\,dt, & \text{if}\;\; x_2 \geq x_1; \\
\displaystyle - \frac{2}{\pi} \int \limits_1^{\infty} t^{x_2-x_1}
\cos \left( tu_1 + \frac{\pi}{2} x_1\right) \cos \left( tu_2 + \frac{\pi}{2}x_2 \right )\,dt, & \text{if}\;\; x_2 < x_1.
\end{cases}
\end{align*}
\end{prop}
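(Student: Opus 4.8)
The plan is to evaluate, by residues, the double contour integral $D$ in the formula for $K_{\rm edge}$ from Theorem~\ref{thm:main} and then to reconcile the outcome with the indicator term $\ind{u_2<u_1,\,x_2<x_1}\frac{(u_2-u_1)^{x_1-x_2-1}}{(x_1-x_2-1)!}$ present there; recall that the integrand of $D$ is $\frac{\pi}{2}\,G(w;x_1,u_1)G(z;x_2,u_2)/(w+z+x_2-x_1+1)$ with $G(z;x,u)=u^z/(\Gamma(z+1)\sin(\tfrac{\pi}{2}(z+x)))$. The one analytic input I would use repeatedly is the identity
\[
\frac{1}{2\pi\mathbf i}\oint_{C[0,\infty)}\frac{v^w\,dw}{\Gamma(w+1)\sin(\tfrac{\pi}{2}(w+x))}=\frac{2}{\pi}\cos\Big(v+\frac{\pi}{2}x\Big),\qquad v>0,\ x\in\Z,
\]
which follows by summing the residues at the simple poles $w=m$ ($m\ge 0$, $m\equiv x\bmod 2$), each equal to $\frac{2}{\pi}\frac{(-1)^{(m+x)/2}v^m}{m!}$, and matching the Taylor expansion of the cosine; the part of the unbounded contour near infinity is harmless because $v^w/\Gamma(w+1)$ decays super-exponentially (or one works with the bounded contours $C[0,R)$ and lets $R\to\infty$).

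For $x_2\ge x_1$ the indicator term is absent, and the contours can be arranged so that $\Re(w+z+x_2-x_1+1)>0$ along them (their leftmost points have real part close to $0$, while $x_2-x_1+1\ge 1$). Then I would write $1/(w+z+x_2-x_1+1)=\int_0^1 s^{\,w+z+x_2-x_1}\,ds$ and apply Fubini together with the identity above in each of $w$ and $z$ separately; this produces directly $D=\frac{2}{\pi}\int_0^1 s^{\,x_2-x_1}\cos(su_1+\tfrac{\pi}{2}x_1)\cos(su_2+\tfrac{\pi}{2}x_2)\,ds$, which is the formula of Definition~\ref{def:Xedge}.

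For $x_2<x_1$, set $N=x_1-x_2-1\ge 0$. Doing the $w$-integral first (over the inner contour), the only enclosed singularities are the poles of $G(w;x_1,u_1)$: the moving pole $w=x_1-x_2-1-z$ stays outside $C_w$ for every $z\in C_z$ by connectedness --- it is far to the left when $z$ is far to the right --- so we are left with $\frac{1}{2\pi\mathbf i}\oint_{C_z}G(z;x_2,u_2)\sum_{m\ge 0,\,m\equiv x_1(2)}\frac{(-1)^{(m+x_1)/2}u_1^m}{m!\,(m+z-N)}\,dz$. In the remaining $z$-integral one now encloses, besides the poles of $G(z;x_2,u_2)$, the simple poles $z=N-m$ for $0\le m\le N$; evaluating there (using $\sin(\tfrac{\pi}{2}(N-m+x_2))=\sin(\tfrac{\pi}{2}(x_1-m-1))=-(-1)^{(x_1-m)/2}$ and simplifying signs) contributes an extra finite sum $S_2=-(-1)^{x_1}\sum_{0\le m\le N,\,m\equiv x_1(2)}\frac{u_1^m u_2^{N-m}}{m!\,(N-m)!}$. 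Writing $F(s):=\cos(su_1+\tfrac{\pi}{2}x_1)\cos(su_2+\tfrac{\pi}{2}x_2)$ and letting $P_{<N}$ be its Taylor polynomial of degree $\le N-1$, one observes that $m+\ell-N$ is never zero by parity (equivalently, the degree-$N$ Taylor coefficient of $F$ vanishes), so $1/(m+\ell-N)$ may be replaced by $\int_0^1 s^{m+\ell-N-1}ds$ when $m+\ell>N$ and by $-\int_1^\infty s^{m+\ell-N-1}ds$ when $m+\ell<N$; rearranging the resulting expression for $K_{\rm edge}=D+(\text{indicator})$ collapses the target identity $K_{\rm edge}=-\frac{2}{\pi}\int_1^\infty s^{\,x_2-x_1}F(s)\,ds$ to
\[
\frac{2}{\pi}\int_0^\infty s^{-N-1}\big(F(s)-P_{<N}(s)\big)\,ds=-S_2-\ind{u_2<u_1}\frac{(u_2-u_1)^N}{N!},
\]
the integral on the left converging because $F-P_{<N}=O(s^{N+1})$ near $0$ and is $O(s^{-2})$ or conditionally integrable at infinity.

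To establish this last identity I would integrate by parts $N$ times (all boundary terms vanish), reducing the left side to $\frac{2}{\pi N!}\int_0^\infty s^{-1}F^{(N)}(s)\,ds$, then expand $F(s)=\frac14\sum_{\epsilon\in\{\pm1\}^2}e^{\mathbf i\frac{\pi}{2}(\epsilon_1x_1+\epsilon_2x_2)}e^{\mathbf i s(\epsilon_1u_1+\epsilon_2u_2)}$ and apply the Frullani-type evaluation $\int_0^\infty\frac{e^{\mathbf i a s}-e^{-s}}{s}\,ds=-\log|a|+\frac{\mathbf i\pi}{2}\operatorname{sgn}(a)$ (legitimate because the subtracted $e^{-s}$ carries the vanishing $s^N$-coefficient of $F$). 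A parity computation makes the logarithmic terms cancel, leaving $\frac{1}{2N!}\big[(-1)^{x_1}(u_1+u_2)^N+(-1)^{x_1-x_2}(u_1-u_2)^N\operatorname{sgn}(u_1-u_2)\big]$, and this is checked to equal $-S_2-\ind{u_2<u_1}\frac{(u_2-u_1)^N}{N!}$ using $\sum_{m\equiv x_1(2)}\binom{N}{m} u_1^m u_2^{N-m}=\frac12\big[(u_1+u_2)^N+(-1)^{x_1}(u_2-u_1)^N\big]$, $\ind{u_2<u_1}=\frac12(1-\operatorname{sgn}(u_1-u_2))$, and $(-1)^N=-(-1)^{x_1-x_2}$. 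The main obstacle is exactly this case $x_2<x_1$: keeping straight the bookkeeping of the extra residues $S_2$ against the indicator term, and carrying out the regularized Mellin/Frullani evaluation of $\int_0^\infty s^{-1}F^{(N)}(s)\,ds$ while verifying that all the divergent pieces cancel by parity. The case $x_2\ge x_1$, by contrast, is a one-line consequence of the cosine identity.
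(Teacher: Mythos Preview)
Your argument for $x_2\ge x_1$ is correct and in fact more direct than the paper's: writing $\tfrac{1}{w+z+x_2-x_1+1}=\int_0^1 s^{\,w+z+x_2-x_1}\,ds$ (legitimate since the contours can be taken with real parts $>-\tfrac12$, so $\Re(w+z)>-1\ge x_1-x_2-1$) and then invoking the cosine identity in each variable separately yields the answer at once.

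For $x_2<x_1$ your route diverges from the paper's, and the connectedness claim for the moving pole $w=N-z$ is not adequately justified. For a single hairpin $C_w$ around $[0,\infty)$ with $C_z$ outside it, the tip of $C_z$ (real part $\approx-\tfrac12$) is sent by $z\mapsto N-z$ to a point of real part $\approx N+\tfrac12\ge\tfrac12$ with small imaginary part, which lies \emph{inside} $C_w$; hence $N-C_z$ must cross $C_w$ somewhere, contradicting the hypothesis of the connectedness argument. Tracing back through the proof of Theorem~\ref{thm:pyt} one sees that, for $x_2<x_1$, the deformed $w$-contour there is really two pieces, one nested inside $C_z$, and with \emph{that} arrangement your claim does hold --- but the one-line justification you give does not establish this, and for the ``single hairpin'' picture it is false. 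The Frullani/parity computation you sketch afterward is plausible, but its correctness is contingent on resolving this contour issue.

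The paper bypasses all of this with an ODE. Setting $f(t)=K_{\rm edge}(x_1,tu_1;x_2,tu_2)$ and differentiating $t^{x_2-x_1+1}f(t)$ multiplies the integrand by $w+z+x_2-x_1+1$, which cancels the denominator; the double integral then factors and each factor is exactly your cosine identity, giving
\[
[t^{x_2-x_1+1}f]'=\tfrac{2}{\pi}\,t^{x_2-x_1}\cos\!\big(tu_1+\tfrac{\pi}{2}x_1\big)\cos\!\big(tu_2+\tfrac{\pi}{2}x_2\big).
\]
One then integrates from $0$ to $1$ (for $x_2\ge x_1$) or from $1$ to $\infty$ (for $x_2<x_1-1$, matching the boundary value at $t\to0$ computed in Lemma~\ref{lem:tlimit}); the borderline case $x_2=x_1-1$ is handled with the sine-integral identity $\int_0^\infty\tfrac{\sin t}{t}\,dt=\tfrac{\pi}{2}$. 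This is more uniform than your case split and avoids any question of which residues the iterated contour integrals pick up.
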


\begin{proof}
For $t > 0$ let
$$f(t) = K_{\rm{edge}} (x_1, tu_1; x_2, tu_2) = t^{x_1-x_2-1} \ind{u_1 > u_2, x_1 > x_2}
\frac{(u_2-u_1)^{x_1-x_2-1}}{(x_1-x_2-1)!} + I(t),$$
where $I(t)$ is as defined in Lemma \ref{lem:tlimit}. By Lemma \ref{lem:tlimit}, $f$
is continuous differentiable on $\R_{>0}$ and the function $t^{x_2-x_1+1}f(t)$ may
be differentiated by interchanging differentiation with integration.
Differentiating $t^{x_2-x_1+1}f(t)$ and clearing common powers of $t$ gives
\begin{align*}
(x_2-x_1+1)f + tf' &= \dint{[0,\infty)}{[0,\infty)} \frac{\pi}{2} \cdot G(w; x_1,tu_1) \, G(z; x_2,tu_2)) \\
&= \frac{\pi}{2} \cdot \frac{1}{2 \pi \mathbf{i}} \oint \limits_{C_w[0,\infty)} dw\, G(w; x_1,tu_1) \cdot
\frac{1}{2 \pi \mathbf{i}} \oint \limits_{C_z[0,\infty)} dz\, G(z; x_2,tu_2)).
\end{align*}

The contour integrals can be evaluated by summing over residues of $G$. Inside the contour
$C_z[0,\infty]$, the function $G(z,x,u)$ has simple poles at integers $z$ such that $\sin (\pi(z+x)/2) = 0$.
In other words, $z$ has to have the same parity as $x$. Let $\bar{x} = \ind{ x \,\text{odd}}$.
The residues of the integral come from integers $z = 2k + \bar{x}$ for $k \geq 0$.
Since $\mathrm{Res}_{y=2k}(\sin(\pi y/2)^{-1}) = (2/\pi)(-1)^k$, summing over the residues gives
\begin{align*}
\frac{1}{2 \pi \mathbf{i}} \oint \limits_{C_z[0,\infty)} dw\, \frac{(tu)^z}{\G{z+1} \sin (\frac{\pi}{2}(z+x))} &= \frac{2(-1)^{\frac{x+\bar{x}}{2}}}{\pi}
\sum_{k \geq 0}  (-1)^k \frac{(tu)^{2k + \bar{x}}}{(2k + \bar{x})!} \\
& = \frac{2(-1)^{\frac{x+\bar{x}}{2}}}{\pi} \cos \left( ut -\frac{\pi}{2} \bar{x} \right)\\
& = \frac{2}{\pi} \cos \left( ut +\frac{\pi}{2} x\right).
\end{align*}
Consequently,
\begin{equation} \label{eqn:prop2}
(x_2-x_1+1)f(t) + tf'(t) = \frac{2}{\pi} \cos \left( u_1t +\frac{\pi}{2}x_1 \right) \cos \left ( u_2t + \frac{\pi}{2}x_2 \right).
\end{equation}
Multiplying (\ref{eqn:prop2}) by $t^{x_2-x_1}$ then implies that
\begin{equation} \label{eqn:prop3}
[t^{x_2-x_1+1}f]' = \frac{2}{\pi} \,t^{x_2-x_1} \cos \left( u_1t +\frac{\pi}{2}x_1\right) \cos \left ( u_2t + \frac{\pi}{2}x_2 \right).
\end{equation}

For $x_2 \geq x_1$, the r.h.s.~of (\ref{eqn:prop3}) is integrable over $t$ in $[0,1]$.
Moreover, $t^{x_2-x_1+1}f(t) \to 0$ as $t \to 0$ because $\lim_{t \to 0} f(t) = \lim_{t \to 0} I(t)$,
and the latter limit is finite whereas $t^{x_2-x_1+1} \to 0$ as $t \to 0$. Therefore, (\ref{eqn:prop3}) implies
$$ f(1) = \frac{2}{\pi} \int_{0}^1
t^{x_2-x_1} \cos \left( u_1t +\frac{\pi}{2}x_1 \right) \cos \left ( u_2t + \frac{\pi}{2}x_2 \right) \,dt.$$

Next, consider the case $x_2 < x_1-1$. Now the relation from (\ref{eqn:prop3}) should be integrated
from 1 to $\infty$, which is convergent since $x_2-x_1 \leq -2$. The formula follows
so long as $\lim_{t \to \infty} t^{x_2-x_1+1}f(t) = 0$. Rather than derive this limit we take a
slightly indirect approach by considering the limit of $f(t)$ near $t=0$.
For $t > 0$ define
\begin{align} \label{eqn:prop4}
g(t) &= - \frac{2}{\pi} \int_1^{\infty} s^{x_2-x_1} \cos ( u_1st +\frac{\pi}{2}x_1) \cos ( u_2st + \frac{\pi}{2}x_2) \,ds \\
\nonumber &= - \frac{2}{\pi}\, t^{x_1-x_2-1} \int_t^{\infty} s^{x_2-x_1} \cos ( u_1s +\frac{\pi}{2}x_1) \cos ( u_2s + \frac{\pi}{2}x_2) \,ds.
\end{align}
Upon differentiating $g$ it follows readily that $g$ satisfies the same differential equation
as $f$ displayed in (\ref{eqn:prop2}). Therefore, $f(t) = g(t) + C$ for some constant $C$.
In order to identify $C$ as zero it suffices to show that $\lim_{t \to 0} f(t)-g(t) = 0$.
Since $t^{x_1-x_2-1} \to 0$ as $t \to 0$, due to $x_2 < x_1-1$, both $f(t)$ and $I(t)$
have the same limit as $t \to 0$. Thus, utilizing Lemma \ref{lem:tlimit}, showing $C=0$ amounts to proving
$$\lim_{t \to 0} \,g(t) = \frac{2}{\pi}\, \frac{\cos (\frac{\pi}{2}x_1) \cos(\frac{\pi}{2} x_2) }{x_2-x_1+1}.$$

The limit of $g(t)$ can be found using L'H\^{o}spital's rule, which shows that
\begin{align*}
\lim_{t \to 0} \,g(t) &= \lim_{t \to 0}
\frac{- \frac{2}{\pi} \int_t^{\infty} s^{x_2-x_1} \cos ( u_1s +\frac{\pi}{2}x_1)
\cos ( u_2s + \frac{\pi}{2}x_2)\,ds}{t^{x_2-x_1+1}} \\
&= \lim_{t \to 0} \; \frac{\frac{2}{\pi}\, t^{x_2-x_1} \cos ( u_1t +\frac{\pi}{2}x_1)
\cos ( u_2t + \frac{\pi}{2}x_2)}{(x_2-x_1+1) t^{x_2-x_1}}\\
& = \frac{2}{\pi}\, \frac{\cos(\frac{\pi}{2}x_1) \cos(\frac{\pi}{2}x_2) }{x_2-x_1+1}.
\end{align*}
We conclude that $g(t) = f(t)$ for $t \in \R_{> 0}$, and in particular that $f(1) = g(1)$, as required.

Finally, consider the case $x_2 = x_1-1$. The r.h.s.~of (\ref{eqn:prop3}) is continuous for
$t$ in $[0,1]$ because one of $\cos ( u_1t +\frac{\pi}{2}x_1)$ or $\cos ( u_2t + \frac{\pi}{2}x_2)$
has a zero at $t=0$ depending upon the parity of $x_1$. From Lemma \ref{lem:tlimit},
$\lim_{t \to 0} f(t) = \ind{u_1 > u_2} - \ind{ x_1 \,\text{even}}$. Therefore, (\ref{eqn:prop3}) implies that
\begin{equation} \label{eqn:1diff}
f(1) = \ind{u_1 > u_2} - \ind{ x_1 \,\text{even}} +
\frac{2}{\pi} \int_0^1 t^{-1}\cos ( u_1t +\frac{\pi}{2}x_1) \cos ( u_2t + \frac{\pi}{2}x_2)\,dt.\end{equation}

We now express (\ref{eqn:1diff}) as an integral over $t \in [1,\infty)$ as given in the proposition.
First, note $K_{\rm{edge}}$ may be modified on the measure zero set
consisting of $(x_1,u_1;x_2,u_2)$ such that $u_1=u_2$ without changing determinants
in (\ref{eqn:dpp}), and thus, this does not affect the law of $\X_{\rm{edge}}$. We will modify
the kernel on this zero set after the following calculations to get the form given in the proposition.

Observe that $\cos(u_2t + \frac{\pi}{2} x_2) = \sin(u_2t + \frac{\pi}{2}x_1)$ if $x_2=x_1-1$.
Using trigonometric formulae the integrand of (\ref{eqn:1diff}) becomes
\begin{align} \label{eqn:1diffintegrand}
&\frac{2 \cos ( u_1t +\frac{\pi}{2}x_1) \sin ( u_2t + \frac{\pi}{2}x_1)}{\pi t} = \begin{cases}
\frac{\sin((u_1+u_2)t) + \sin((u_2-u_1)t)}{\pi t}, & x_1 \;\text{even} \\
 \frac{-\sin((u_1+u_2)t) + \sin((u_2-u_1)t)}{\pi t}, & x_1 \;\text{odd}.
\end{cases}
\end{align}
Using the fact that $\int_0^{\infty} \frac{\sin t}{t} = \frac{\pi}{2}$, we get that for $a \in \R$,
\begin{equation} \label{eqn:si}
\int_0^1 \frac{\sin (at)}{\pi t}\,dt = \mathrm{sgn}(a) \int_0^{|a|} \frac{\sin t}{\pi t} \,dt=
\frac{\mathrm{sgn}(a)}{2} - \int_1^{\infty} \frac{\sin(at)}{\pi t}\,dt.
\end{equation}

Using (\ref{eqn:si}) and the representation of the integrand in (\ref{eqn:1diffintegrand}) we infer that if $x_1$ is even then
\begin{align*}
& \ind{u_1 > u_2} - \ind{ x_1 \,\text{even}} + \frac{2}{\pi} \int_0^1 t^{-1}\cos ( u_1t +\frac{\pi}{2}x_1) \sin ( u_2t + \frac{\pi}{2}x_1)\,dt = \\
& \ind{u_1 > u_2} - 1 + \frac{\mathrm{sgn}(u_1+u_2) + \mathrm{sgn}(u_2-u_1)}{2}
- \int_1^{\infty} \frac{\sin((u_1+u_2)t) + \sin((u_2-u_1)t)}{\pi t} \,dt = \\
& - \ind{u_1=u_2} (\frac{1 + \ind{u_1=0}}{2}) - \frac{2}{\pi} \int_1^{\infty} t^{-1} \cos ( u_1t +\frac{\pi}{2}x_1) \sin ( u_2t + \frac{\pi}{2}x_1)\,dt.
\end{align*}
This shows that (\ref{eqn:1diff}) equals the expression given in the statement of the proposition for
$x_2 = x_1-1$ and $x_1$ even except for the additive term $- \ind{u_1=u_2} (\frac{1 + \ind{u_1=0}}{2})$.
By modifying $K_{\rm{edge}}$ on the zero set $\{u_1=u_2, x_2=x_1-1, x_1 \; \text{even}\}$ we may ignore this term.

For $x_1$ being odd we argue in the same manner as above to infer that (\ref{eqn:1diff}) equals
\begin{align*}
& \ind{u_1 > u_2} - \ind{ x_1 \,\text{even}} + \frac{2}{\pi} \int_0^1 t^{-1}\cos ( u_1t +\frac{\pi}{2}x_1) \sin ( u_2t + \frac{\pi}{2}x_1)\,dt = \\
& \ind{u_1 > u_2}  - \frac{\mathrm{sgn}(u_1+u_2) + \mathrm{sgn}(u_1-u_2)}{2} +
\int_1^{\infty} \frac{\sin((u_1+u_2)t) + \sin((u_1-u_2)t)}{\pi t} \,dt= \\
& = -\frac{\ind{u_1=u_2 > 0}}{2} - \frac{2}{\pi} \int_1^{\infty} t^{-1} \cos ( u_1t +\frac{\pi}{2}x_1) \sin ( u_2t + \frac{\pi}{2}x_1)\,dt.
\end{align*}
Once again, we modify $K_{\rm{edge}}$ on the zero set $\{u_1 = u_2, x_2=x_1-1, x_1 \;\text{odd}\}$
to ignore the additive term $-\frac{1}{2} \ind{u_1=u_2 > 0}$ and get the expression of the kernel
given in the proposition.
\end{proof}

\subsection{Statistical properties of $\X_{\rm{edge}}$} \label{sec:edgeproperties}

This section derives certain properties of $\X_{\rm{edge}}$, namely, Proposition \ref{prop:Kedge},
Proposition \ref{prop:tail}, Lemma \ref{lem:intensity} and Lemma \ref{lem:EGhelper},
that will be used to derive the local limit of staircase shaped tableaux and of sorting networks.

\begin{prop} \label{prop:Kedge}
The process $\X_{\rm{edge}}$ has the following statistical properties.

I) Translation and reflection invariance: For any integer $h$ the translated process
$$ \X_{\rm{edge}} + (2h,0) = \{ (x+2h,u) : (x,u) \in \X_{\rm{edge}} \}$$
and the reflected process
$$(-1,1) * \X_{\rm{edge}} = \{(-x,u): (x,u) \in \X_{\rm{edge}} \}$$
have the same law as $\X_{\rm{edge}}$.

II) One dimensional marginals: For any $x \in \Z$ and $u_1, u_2 \in \R_{\geq 0}$,
\begin{equation} \label{eqn:diagonal}
K_{\mathrm{edge}}(x,u_1; x,u_2) = \frac{\sin(u_1-u_2)}{\pi \, (u_1-u_2)} + (-1)^x \, \frac{\sin(u_1+u_2)}{\pi \, (u_1+u_2)}.
\end{equation}
Therefore, $\X_{\rm{edge}} \cap (\{x\}\times \R_{\geq 0})$ is a determinantal point process
with reference measure $\Lb(\R_{\geq 0})$ and correlation kernel (\ref{eqn:diagonal}).
\end{prop}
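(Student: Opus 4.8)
The plan is to read off all three assertions from the explicit formula for $K_{\rm edge}$ established in Proposition \ref{prop:simplekernel}; no further analytic input is needed.

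I would dispatch Part (II) first, since it is a one-line computation. Taking $x_1 = x_2 = x$ puts us in the case $x_2 \geq x_1$ with $t^{x_2-x_1} = 1$, so
\[
K_{\rm edge}(x,u_1;x,u_2) = \frac{2}{\pi}\int_0^1 \cos\!\left(tu_1 + \tfrac{\pi}{2}x\right)\cos\!\left(tu_2 + \tfrac{\pi}{2}x\right)\,dt .
\]
Applying the product-to-sum formula and then $\cos(\theta + \pi x) = (-1)^x\cos\theta$ (legitimate because $x\in\Z$) turns the integrand into $\tfrac12\cos\!\left(t(u_1-u_2)\right) + \tfrac{(-1)^x}{2}\cos\!\left(t(u_1+u_2)\right)$, and integrating term by term via $\int_0^1\cos(at)\,dt = \sin(a)/a$ yields exactly (\ref{eqn:diagonal}). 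That $\X_{\rm edge}\cap(\{x\}\times\R_{\geq 0})$ is then a determinantal point process with this kernel and reference measure $\Lb(\R_{\geq 0})$ is the general fact that the restriction of a determinantal point process to a measurable subset of its state space is again determinantal with the kernel and reference measure restricted; one reads this off (\ref{eqn:dpp}) by taking test functions supported on $(\{x\}\times\R_{\geq 0})^k$; see \cite{Bor}.

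For the translation invariance in Part (I), the translated process $\X_{\rm edge}+(2h,0)$ has correlation kernel $(x_1,u_1;x_2,u_2)\mapsto K_{\rm edge}(x_1-2h,u_1;x_2-2h,u_2)$, and I would show this equals $K_{\rm edge}(x_1,u_1;x_2,u_2)$ identically. The shift $x_i\mapsto x_i-2h$ preserves the sign of $x_2-x_1$ (so the case split in Proposition \ref{prop:simplekernel} is untouched), leaves $t^{x_2-x_1}$ unchanged, and replaces each $\cos\!\left(tu_i + \tfrac{\pi}{2}x_i\right)$ by $\cos\!\left(tu_i + \tfrac{\pi}{2}x_i - \pi h\right) = (-1)^h\cos\!\left(tu_i + \tfrac{\pi}{2}x_i\right)$; the two factors $(-1)^h$ cancel. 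Hence the kernel, and so the law, is unchanged.

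The reflection invariance is the one place with a genuine subtlety, and I expect it to be the main point. The reflected process $(-1,1)*\X_{\rm edge}$ has correlation kernel $\widetilde K(x_1,u_1;x_2,u_2) := K_{\rm edge}(-x_1,u_1;-x_2,u_2)$, and this is \emph{not} a gauge transform of $K_{\rm edge}$ itself, because reflection sends $\{x_2\geq x_1\}$ to $\{x_2\leq x_1\}$ and thereby interchanges the two cases of Proposition \ref{prop:simplekernel}. The correct comparison is with the \emph{transpose} $K_{\rm edge}(x_2,u_2;x_1,u_1)$, which defines the same point process because $\det[K(y_i,v_i;y_j,v_j)]_{i,j} = \det[K(y_j,v_j;y_i,v_i)]_{i,j}$. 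Using $\cos\!\left(tu - \tfrac{\pi}{2}x\right) = (-1)^x\cos\!\left(tu + \tfrac{\pi}{2}x\right)$ inside each case, a direct check in the three cases $x_2>x_1$, $x_2<x_1$, $x_2=x_1$ will give, uniformly,
\[
\widetilde K(x_1,u_1;x_2,u_2) = (-1)^{x_1+x_2}\,K_{\rm edge}(x_2,u_2;x_1,u_1) = \frac{(-1)^{x_1}}{(-1)^{x_2}}\,K_{\rm edge}(x_2,u_2;x_1,u_1).
\]
The gauge factor $(-1)^{x_1}/(-1)^{x_2}$ drops out of every $k\times k$ determinant of kernel entries, so $(-1,1)*\X_{\rm edge}$ has the same correlation functions, hence the same law, as $\X_{\rm edge}$. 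The only thing requiring care is tracking how the case split behaves under reflection and recognizing that one must land on the transpose of $K_{\rm edge}$ rather than on $K_{\rm edge}$ itself; everything else is direct substitution.
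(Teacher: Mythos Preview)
Your proof is correct and follows essentially the same approach as the paper's: both parts are read off directly from the integral representation of $K_{\rm edge}$ in Proposition~\ref{prop:simplekernel}, using $\cos(\theta+\pi h)=(-1)^h\cos\theta$ for the translation and the identity $K_{\rm edge}(-x_1,u_1;-x_2,u_2)=(-1)^{x_1-x_2}K_{\rm edge}(x_2,u_2;x_1,u_1)$ for the reflection. Your treatment of the reflection is in fact slightly more explicit than the paper's, which states this last identity without spelling out how the case split aligns with the transpose.
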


\begin{proof}
Part I) From Lemma \ref{lem:dpp} the correlation kernel of $\X_{\rm{edge}} + (2h,0)$
equals $K_{\mathrm{edge}}(x_1-2h,u_1; x_2-2h,u_2)$. The integral representation of
$K_{\rm{edge}}$ in Proposition \ref{prop:simplekernel} implies that
$$K_{\mathrm{edge}}(x_1-2h,u_1; x_2-2h,u_2) = K_{\mathrm{edge}}(x_1,u_1; x_2,u_2)$$
upon observing that $\cos (x + \pi h) = (-1)^h \cos(x)$, which implies that the integrands do not
change after the kernel is transformed. Consequently, the translated point process has the same law
as the original. Similarly, the correlation kernel for the reflected process is
$K_{\mathrm{edge}}(-x_1,u_1;-x_2;u_2) = (-1)^{x_1-x_2} K_{\mathrm{edge}}(x_2,u_2;x_1,u_1)$.
The latter kernel defines the same determinantal point process as $\X_{\rm{edge}}$ in law.
\vskip 0.1in
Part II) Proposition \ref{prop:simplekernel} gives that
\begin{align*}
K_{\mathrm{edge}}(x,u_1;x,u_2) &= \frac{2}{\pi} \int_0^1 \cos(tu_1 + \frac{\pi}{2}x) \cos(tu_2 + \frac{\pi}{2}x)\,dt \\
&= \frac{1}{\pi}  \left(\frac{\sin(tu_1-tu_2)}{u_1-u_2} + \frac{\sin(tu_1 + tu_2 + \pi x)}{u_1+u_2} \right )
\Bigg |_{t=0}^{t=1}\\
&= \frac{1}{\pi} \left ( \frac{\sin(u_1-u_2)}{u_1-u_2} + (-1)^x \frac{\sin(u_1+u_2)}{u_1+u_2}\right).
\end{align*}
The fact hat $\X_{\rm{edge}} \cap (\{x\}\times \R_{\geq 0})$ is determinantal with kernel as stipulated
follows from the relation (\ref{eqn:dpp}) for determinantal point processes.
\end{proof}

\begin{lemma} \label{lem:correlation}
There is a universal constant $C$ such that for $x_1,x_2 \in \Z$ and $u_1, u_2 \in \R_{\geq 0}$,
\begin{equation} \label{eqn:correlation1}
| K_{\rm{edge}}(x_1,u_1;x_2,u_2) | \leq \frac{C}{\max \{ |x_1-x_2|, |u_1-u_2| \} + 1}.
\end{equation}
\end{lemma}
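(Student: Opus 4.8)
The plan is to estimate $K_{\rm{edge}}$ directly from the explicit integral representation in Proposition \ref{prop:simplekernel}. Up to an overall sign, $K_{\rm{edge}}(x_1,u_1;x_2,u_2)$ equals $\tfrac{2}{\pi}\int_0^1 t^{x_2-x_1}\cos(tu_1+\tfrac{\pi}{2}x_1)\cos(tu_2+\tfrac{\pi}{2}x_2)\,dt$ when $x_2\ge x_1$, and $-\tfrac{2}{\pi}\int_1^\infty t^{x_2-x_1}\cos(tu_1+\tfrac{\pi}{2}x_1)\cos(tu_2+\tfrac{\pi}{2}x_2)\,dt$ when $x_2<x_1$. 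First I would apply the product-to-sum identity $\cos A\cos B=\tfrac{1}{2}(\cos(A-B)+\cos(A+B))$ with $A=tu_1+\tfrac{\pi}{2}x_1$ and $B=tu_2+\tfrac{\pi}{2}x_2$: the two resulting terms oscillate in $t$ with frequencies $a:=u_1-u_2$ and $b:=u_1+u_2$, and the key elementary observation is that $b\ge|a|$ since $u_1,u_2\ge 0$. For each such term there are two competing bounds on an integral of the form $\int t^{\delta}\cos(ct+\phi)\,dt$: the trivial absolute-value bound, and the bound coming from a single integration by parts, which contributes a factor $2/|c|$ (the boundary terms being controlled on $[0,1]$ and at $\infty$ for the exponents that occur).

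In the regime $x_2\ge x_1$ I would set $\delta:=x_2-x_1\ge 0$; the absolute-value bound on $\int_0^1 t^{\delta}\cos(ct+\phi)\,dt$ is $\tfrac{1}{\delta+1}$ and the integration-by-parts bound is $\tfrac{2}{|c|}$, so each of the two terms is at most $\min\{\tfrac{1}{\delta+1},\tfrac{2}{|c|}\}$ with $c\in\{a,b\}$, hence (using $b\ge|a|$) at most $\min\{\tfrac{1}{\delta+1},\tfrac{2}{|a|}\}$; an elementary check gives $\min\{\tfrac{1}{\delta+1},\tfrac{2}{|a|}\}\le \tfrac{4}{\max\{\delta,|a|\}+1}$, which is exactly the claimed form. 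In the regime $x_2\le x_1-2$ the same argument applies with $\int_0^1 t^{\delta}$ replaced by $\int_1^\infty t^{-e}$, $e:=x_1-x_2\ge 2$: the absolute-value bound is $\int_1^\infty t^{-e}\,dt=\tfrac{1}{e-1}\le\tfrac{3}{e+1}$, the integration-by-parts bound is again $\tfrac{2}{|c|}$, and one combines them as before.

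The one delicate regime, and the place I expect the only real work, is $x_2=x_1-1$, where $\int_1^\infty t^{-1}\,dt$ diverges so the crude bound fails and the exact phase must be used. Since $\tfrac{\pi}{2}(x_1-x_2)=\tfrac{\pi}{2}$ and $\tfrac{\pi}{2}(x_1+x_2)=\pi x_2+\tfrac{\pi}{2}$, one computes $\cos(A-B)=-\sin(ta)$ and $\cos(A+B)=-(-1)^{x_2}\sin(tb)$, so
\[
K_{\rm{edge}}(x_1,u_1;x_2,u_2)=\frac{1}{\pi}\int_1^\infty\frac{\sin(ta)+(-1)^{x_2}\sin(tb)}{t}\,dt,
\]
which is a harmless improper integral, vanishing identically when $a=0$. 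Each summand equals $\int_1^\infty\tfrac{\sin(ct)}{t}\,dt=\mathrm{sgn}(c)\int_{|c|}^\infty\tfrac{\sin s}{s}\,ds$; I would bound $\int_{|c|}^\infty\tfrac{\sin s}{s}\,ds$ by an absolute constant using boundedness of the sine integral, and by $2/|c|$ for $|c|\ge 1$ via one more integration by parts, hence by $\tfrac{4}{|c|+1}$ in all cases. Combining the two summands with $b\ge|a|$ and noting $\max\{|x_1-x_2|,|a|\}=\max\{1,|a|\}\le|a|+1$ gives $|K_{\rm{edge}}|\le C/(\max\{1,|a|\}+1)$, completing this case. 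Taking the largest of the constants from the three regimes gives the universal $C$; the main obstacle is genuinely this last case, relying on the cancellation $\cos\tfrac{\pi}{2}=0$ that removes the would-be logarithmic divergence together with the uniform control of $\int_{|c|}^\infty\tfrac{\sin s}{s}\,ds$, everything else being routine.
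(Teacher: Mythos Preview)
Your proposal is correct and follows essentially the same approach as the paper's proof: both split into the three regimes $x_2\ge x_1$, $x_2\le x_1-2$, and $x_2=x_1-1$, use the trivial absolute-value bound for decay in $|x_1-x_2|$, use a single integration by parts for decay in $|u_1-u_2|$, and handle the critical case $x_2=x_1-1$ via the sine-integral representation. The only cosmetic difference is that the paper integrates by parts the product $\cos(tu_1+\tfrac{\pi}{2}x_1)\cos(tu_2+\tfrac{\pi}{2}x_2)$ directly via an explicit antiderivative $v(t)$, whereas you first apply product-to-sum and then integrate by parts each summand; these are equivalent.
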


\begin{proof}
Throughout this argument $C$ denotes a universal constant whose value may change from line to line.
We begin with the case $x_2 \neq x_1-1$. From the integral representation of $K_{\rm{edge}}$ we see
that if $x_2 \geq x_1$ then
\begin{align} \label{eqn:Kxdecay1}
|K_{\rm{edge}}(x_1,u_1;x_2,u_2)| &= \frac{2}{\pi} \left | \int_0^1 t^{x_2-x_1}
\cos(tu_1+\frac{\pi}{2}x_1)\cos(tu_2+\frac{\pi}{2}x_2) \, dt \right | \\
\nonumber & \leq \frac{2}{\pi} \int_0^1 t^{x_2-x_1} \,dt \leq  \frac{C}{|x_2-x_1|+1}.
\end{align}
Similarly, if $x_2 < x_1$ then $x_2 \leq x_1-2$ and
\begin{align} \label{eqn:Kxdecay2}
|K_{\rm{edge}}(x_1,u_1;x_2,u_2)| &= \frac{2}{\pi} \left | \int_1^{\infty} t^{x_2-x_1}
\cos(tu_1+ \frac{\pi}{2}x_1)\cos(tu_2+\frac{\pi}{2}x_2) \, dt \right | \\
\nonumber & \leq \frac{C}{|x_2-x_1|+1}.
\end{align}
Combining these bounds we deduce that if $x_2 \neq x_1 -1$ then
\begin{equation} \label{eqn:Kxdecay}
| K_{\rm{edge}}(x_1,u_1;x_2,u_2)| \leq \frac{C}{|x_1-x_2|+1}.
\end{equation}

Now we consider decay in the $u$-variables, assuming that $x_2 \neq x_1-1$.
Define $v(t)$ as
\begin{equation*} v(t) = \begin{cases}
\displaystyle \frac{\sin (tu_1-tu_2 + \frac{\pi}{2} (x_1-x_2))}{\pi(u_1-u_2)} +
\frac{\sin (tu_1+tu_2 + \frac{\pi}{2} (x_1+x_2))}{\pi(u_1+u_2)} & \text{if}\; u_1 \neq u_2 \\ \\
\displaystyle \frac{t \cos(\frac{\pi}{2}(x_1-x_2))}{\pi} +
\frac{\sin (tu_1+tu_2 + \frac{\pi}{2} (x_1+x_2))}{\pi(u_1+u_2)} & \text{if}\; u_1 = u_2.
\end{cases}\end{equation*}
Then $v'(t) = \frac{2}{\pi} \cos(tu_1 + \frac{\pi}{2}x_1)\cos(tu_2 + \frac{\pi}{2}x_2)$.
Using that $|\sin(y)/y| \leq 1$, and the formula for $v(t)$, we observe that there is a $C$ such that
\begin{equation} \label{eqn:vdecay}
|v(t)| \leq \frac{C}{|u_1-u_2|+1} \;\;\text{if}\;\; |u_1-u_2| \geq 1.\end{equation}

Applying integration by parts to the integral form of $K_{\mathrm{edge}}$ gives, for $x_2 \geq x_1$,
\begin{equation*}
K_{\mathrm{edge}}(x_1,u_1;x_2,u_2) = v(1) -v(0)\ind{x_1=x_2} - \int_0^1 (x_2-x_1)\, t^{x_2-x_1-1} \,v(t) \, dt.
\end{equation*}
Now the triangle inequality and (\ref{eqn:vdecay}) imply that if $|u_1-u_2| \geq 1$ then
\begin{align*}
|K_{\mathrm{edge}}(x_1,u_1;x_2,u_2)| &\leq \frac{C}{|u_1-u_2|+1} + \frac{C \, |x_2-x_1|}{|u_1-u_2|+1} \; \int_0^1 t^{x_2-x_1-1}\, dt \\
& \leq \frac{2C}{|u_1-u_2|+1}.
\end{align*}
If $|u_1-u_2| < 1$, then we use the bound (\ref{eqn:Kxdecay}) to reach the same conclusion as above.
An entirely analogous bound holds when $x_2 < x_1$ because then $x_2 \leq x_1 -2$,
and $t^{x_2-x_1}$ is integrable over $t \in [1,\infty)$. Therefore, for $x_2 \neq x_1-1$,
\begin{equation} \label{eqn:Kudecay}
| K_{\rm{edge}}(x_1,u_1;x_2,u_2)| \leq \frac{C}{|u_1-u_2|+1}.
\end{equation}
Combining (\ref{eqn:Kxdecay}) with (\ref{eqn:Kudecay}) implies the
required inequality (\ref{eqn:correlation1}) for $x_2 \neq x_1-1$.

The case $x_2 = x_1-1 $ requires some care. The representation (\ref{eqn:1diffintegrand}) for
the integrand of $K_{\rm{edge}}(x_1,u_1;x_1-1,u_2)$ gives
\begin{align} \label{eqn:correlation5}
K_{\rm{edge}}(x_1,u_1;x_1-1,u_2) &=
- \frac{2}{\pi} \int_1^{\infty} t^{-1} \cos( tu_1 + \frac{\pi}{2} x_1) \sin( tu_2 + \frac{\pi}{2}x_1)\, dt \\
\nonumber & = - \int_1^{\infty} \frac{(-1)^{x_1} \sin((u_1+u_2)t) + \sin((u_2-u_1)t)}{\pi t} \, dt.
\end{align}

Integration by parts and the triangle inequality imply that for $a \geq 1$,
$$ \left | \int_1^{\infty} dt\, \frac{\sin(a t)}{t} \right | = \left | \frac{\cos(a)}{a} - \int_1^{\infty} dt\, \frac{\cos(at)}{at^2}\right| \leq \frac{C}{a} .$$
For $0 \leq a \leq 1$, we have
$$\left | \int_1^{\infty} dt\, \frac{\sin(a t)}{t} \right | = \left | \frac{\pi}{2} - \int_0^1 dt\, \frac{\sin(at)}{t} \right | \leq C + \int_0^1 dt\, a \leq C.$$
Together, these bounds imply that for $a \in \R$,
\begin{equation} \label{eqn:correlation6}
\left | \int_1^{\infty} dt\, \frac{\sin(a t)}{t} \right | \leq \frac{C}{|a|+1}.
\end{equation}

Separating (\ref{eqn:correlation5}) naturally into two integrals
and applying (\ref{eqn:correlation6}) implies that
$$ |K_{\rm{edge}}(x_1,u_1;x_1-1,u_2)| \leq \frac{C}{|u_1-u_2|+1}.$$
This establishes (\ref{eqn:correlation1}) for the case $x_2 = x_1-1$ and completes the proof.
\end{proof}

\paragraph{\textbf{Spatial ergodicity of $\X_{\rm{edge}}$}}
For $h \in \Z$, denote by $\tau^h$ the translation that maps $(x,u) \mapsto (x+2h,u)$ for $(x,u) \in \Z \times \R_{\geq 0}$.
So $\X_{\rm{edge}}$ is invariant under the action of every $\tau^h$ by Proposition \ref{prop:Kedge}.
An event $E$ associated to $\X_{\rm{edge}}$ is invariant if for every $h \in \Z$,
$E = \tau^h E$, where $\tau^hE = \{\tau^h(\omega): \omega \in E\}$ and $\tau^h(\omega)$ is the
action of $\tau^h$ on a sample outcome $\omega$ of $\X_{\rm{edge}}$. The invariant sigma-algebra
of $\X_{\rm{edge}}$ is the sigma-algebra $\F_{\rm{inv}}$ consisting of all the invariant events.

\begin{prop} \label{prop:tail}
$\X_{\rm{edge}}$ is ergodic w.r.t.~spatial translations in that if $E \in \F_{\rm{inv}}$ then $\pr{E} \in \{0,1\}$.
\end{prop}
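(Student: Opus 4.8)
I would prove the stronger statement that $\X_{\rm{edge}}$ is \emph{mixing} with respect to the translation group $\{\tau^h : h \in \Z\}$, i.e.\ $\pr{A\cap\tau^hB}\to\pr{A}\pr{B}$ as $h\to\infty$ whenever $A,B$ depend on the configuration in bounded windows. Ergodicity follows by a routine approximation: an arbitrary $E\in\F_{\rm{inv}}$ is approximated in probability by local events $E_n$, and since $\tau^hE=E$ one gets $\pr{E}=\pr{E\cap\tau^hE}$, which by mixing applied to the $E_n$ and by $\pr{E\triangle E_n}\to 0$ forces $\pr{E}=\pr{E}^2$, hence $\pr{E}\in\{0,1\}$. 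The engine of mixing for a translation--invariant determinantal process is decay of its correlation kernel, and Lemma~\ref{lem:correlation} supplies precisely this: if $z_1=(x_1,u_1)$ lies in a fixed bounded window and $z_2=(x_2,u_2)$ in a far translate $\tau^hW_2$, then $|x_1-x_2|\ge 2h-O(1)$, so $|K_{\rm{edge}}(z_1;z_2)|=O(1/h)$ uniformly.

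\textbf{Mechanism.} Take nonnegative bounded $\phi_1,\phi_2$ supported in bounded windows $W_1,W_2\subset\Z\times\R_{\geq 0}$, put $\psi_i=1-e^{-\phi_i}\in[0,1)$, and let $\mathcal K$ be the integral operator on $L^2(\Z\times\R_{\geq 0},\,\#_{\Z}\otimes\Lb)$ with kernel $K_{\rm{edge}}$; it is locally trace class with $0\le\mathcal K\le 1$, so the Laplace functional $\E{\exp(-\sum_{z\in\X_{\rm{edge}}}(\phi_1(z)+\phi_2(\tau^{-h}z)))}$ equals the Fredholm determinant $\det(I-A_h)$ with $A_h=\sqrt{\Psi_h}\,\mathcal K\,\sqrt{\Psi_h}$ and $\Psi_h=\psi_1+\psi_2\circ\tau^{-h}$, an operator on $L^2(W_1)\oplus L^2(\tau^hW_2)$ once $h$ is large enough that the two supports are disjoint. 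By the $\tau^h$--invariance of $K_{\rm{edge}}$ (Proposition~\ref{prop:Kedge}) the diagonal blocks of $A_h$ are unitarily equivalent to the fixed operators $\sqrt{\psi_1}\,\mathcal K|_{W_1}\sqrt{\psi_1}$ and $\sqrt{\psi_2}\,\mathcal K|_{W_2}\sqrt{\psi_2}$, whose Fredholm determinants are the two single--window Laplace functionals, while the off--diagonal block, after the isometry $L^2(\tau^hW_2)\cong L^2(W_2)$, has kernel $\sqrt{\psi_1(z_1)}\,K_{\rm{edge}}(z_1-(2h,0);z_2)\sqrt{\psi_2(z_2)}$ on the fixed bounded domain $W_1\times W_2$. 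By Lemma~\ref{lem:correlation} this kernel is $O(1/h)$ uniformly. Using in addition the smoothness of $K_{\rm{edge}}$ from Proposition~\ref{prop:simplekernel} --- differentiating in $u_1,u_2$ only inserts a bounded power of $t$ under the integral $\int_0^1 t^{x_2-x_1}\,dt$, so the relevant $u$--derivatives of the off--diagonal kernel are $O(1/h)$ as well --- the off--diagonal block converges to $0$ in trace norm. Consequently $\det(I-A_h)\to\det(I-\sqrt{\psi_1}\mathcal K|_{W_1}\sqrt{\psi_1})\,\det(I-\sqrt{\psi_2}\mathcal K|_{W_2}\sqrt{\psi_2})$; that is, the joint Laplace functional factorizes into the product of the single--window ones. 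Since Laplace functionals determine the law of a point process, $\X_{\rm{edge}}\cap W_1$ and $\tau^{-h}\X_{\rm{edge}}\cap W_2$ are asymptotically independent, which is exactly the asserted mixing.

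\textbf{Main obstacle.} The delicate point is upgrading the vanishing of the off--diagonal block from Hilbert--Schmidt to trace norm, which is what is needed to pass to the limit inside the Fredholm determinant. This is handled by the smoothness observation above (a $C^1$ integral kernel on a fixed bounded domain with sup--norm and first--derivative bounds both $O(1/h)$ has trace norm $O(1/h)$); alternatively one may simply invoke the general principle that a translation--invariant determinantal point process whose correlation kernel vanishes at infinity is mixing. Either way, combined with the decay estimate of Lemma~\ref{lem:correlation}, this gives mixing and hence the triviality of $\F_{\rm{inv}}$.
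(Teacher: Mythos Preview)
Your approach is correct and reaches the same conclusion, but by a different route than the paper. The paper avoids operator theory entirely: it works directly with the correlation functions, expanding the $2k\times 2k$ determinant $\det[K_{\rm edge}(z_i;z_j)]$ as a sum over permutations and separating those permutations that preserve the block decomposition $\{1,\dots,k\}\cup\{k+1,\dots,2k\}$ from those that do not. The block--preserving permutations produce exactly $\E{N(f)}\E{N(g)}$; each non--preserving permutation contributes at least two off--diagonal kernel entries, so by Lemma~\ref{lem:correlation} its contribution is $O(1/\mathrm{dist}(A,B)^2)$. This yields $|\E{N(f)N(g)}-\E{N(f)}\E{N(g)}|\le (2k)!C^{2k}\|f\|_1\|g\|_1/(1+\mathrm{dist}(A,B)^2)$, and the approximation argument then matches yours. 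This is more elementary: no Laplace functionals, no trace--class machinery, and the $O(1/\mathrm{dist}^2)$ decay comes for free because two cross terms must appear.

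Two small points on your version. First, $K_{\rm edge}$ is not symmetric (inspect the two regimes in Proposition~\ref{prop:simplekernel}), so the assertion ``$0\le\mathcal K\le 1$'' is not meaningful; fortunately you do not use it, since the Fredholm series identity for the Laplace functional holds for non--symmetric determinantal kernels as well. Second, your ``$C^1$ kernel $\Rightarrow$ trace class with norm $O(1/h)$'' step is correct but somewhat roundabout here, and fails for the $x_2=x_1-1$ block (that kernel is discontinuous across $u_1=u_2$); this is harmless because that block never appears in the off--diagonal part once $h$ is large. The cleanest justification of the trace--norm decay is to read it straight off Proposition~\ref{prop:simplekernel}: for $x_2>x_1$ the block is $\frac{2}{\pi}\int_0^1 t^{x_2-x_1}\,|\phi_t\rangle\langle\psi_t|\,dt$ with $\|\phi_t\|_2,\|\psi_t\|_2$ bounded on the fixed windows, hence has trace norm at most $C\int_0^1 t^{x_2-x_1}dt=O(1/h)$, and similarly for $x_2<x_1-1$ with $\int_1^\infty t^{x_2-x_1}dt$.
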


\begin{proof}
For $A \subset \Z \times \R_{\geq 0}$, let $\F(A) = \sigma (\X_{\rm{edge}} \cap A)$ be the sigma-algebra
generated by the points of $\X_{\rm{edge}}$ restricted to $A$.  For $A, B \subset \Z \times \R_{\geq 0}$, let
$$\mathrm{dist}(A,B) = \inf \,\big \{ \max\{|x-y|, |u-v|\}: (x,u) \in A, (y,v) \in B \big \}.$$
For $k \geq 1$, suppose $f: (\Z \times \R_{\geq 0})^k \to \R$ is continuous and compactly supported. Let
$$N(f) = \sum_{\substack{(x_1,u_1), \ldots, (x_k,u_k) \in \X_\mathrm{edge}\\ (x_i,u_i)\;\text{all distinct}}} f(x_1,u_1;\cdots; x_k,u_k).$$
Now suppose $f, g : (\Z \times \R_{\geq 0})^k \to \R$ are continuous and compactly supported such that
there are disjoint subsets $A, B \subset \Z \times \R_{\geq 0}$ with
$\mathrm{support}(f) \subset A^k$ and $\mathrm{support}(g) \subset B^k$. This implies that if
$(x_1,u_1;\cdots; x_k,u_k) \in \mathrm{support}(f)$ and $(x_{k+1},u_{k+1};\cdots; x_{2k}, u_{2k}) \in \mathrm{support}(g)$,
then $(x_i,u_i) \neq (x_{k+j},u_{k+j})$ for every $1 \leq i,j \leq k$. We first show that in this case
\begin{equation} \label{eqn:tail1}
\left | \E{N(f)N(g)} - \E{N(f)}\E{N(g)} \right | \leq \frac{(2k)! C^{2k}}{\mathrm{dist}(A,B)^2 +1}\, ||f||_1 \, ||g||_1,
\end{equation}
where $C$ is the universal constant from Lemma \ref{lem:correlation} and $||f||_1$
is the $L^1$-norm of $f$ with respect to $(\#_{\Z} \otimes \Lb(\R_{\geq 0}))^{\otimes k}$.

Indeed, the assumption on the supports of $f$ and $g$ imply from \eqref{eqn:dpp} that
\begin{align*} \E{N(f)N(g)} = \int \limits_{(\Z \times \R_{\geq 0})^{2k}} & \det [ K_{\mathrm{edge}}(x_i,u_i;x_j,u_j)]_{1 \leq i,j \leq 2k}\,
f(x_1,u_1;\cdots; x_k,u_k) \times \\
 & g(x_{k+1},u_{k+1};\cdots; x_{2k},u_{2k})\, d \big (\#_{\Z} \otimes \Lb(\R_{\geq 0}) \big)^{\otimes 2k}.
\end{align*}
Let us expand the determinant of the $(2k) \times (2k)$ matrix above as a sum over all permutations.
We break up the permutations into two types: permutations that map the subsets $\{1,\ldots, k\}$ and $\{k+1,\ldots, 2k\}$
into themselves and those that do not. When summed over permutations of the first type the integral above
equals $\E{N(f)} \E{N(g)}$. For a permutation $\sigma$ of the second type, observe that there are two indices $i$ and $j$, with
$i \leq k$ and $j > k$, such that $\sigma(i) > k$ and $\sigma(j) < k$. Then for $\ell \in \{i,j\}$, Lemma \ref{lem:correlation} gives
$$ |K_{\mathrm{edge}}(x_{\ell},u_{\ell}; x_{\sigma(\ell)}, u_{\sigma(\ell)}) | \leq
\frac{C}{\max \{ |x_{\ell} - x_{\sigma(\ell)}|, |u_{\ell}-u_{\sigma(\ell)}|\} + 1}
\leq \frac{C}{1 + \mathrm{dist}(A,B)}.$$
For all other indices $\ell$ we have $|K_{\mathrm{edge}}(x_{\ell},u_{\ell}; x_{\sigma(\ell)}, u_{\sigma(\ell)})| \leq C$.
Consequently, the term involving $\sigma$ contributes at most $C^{2k-2} (1 + \mathrm{dist}(A,B))^{-2}$ in absolute value to the determinant
above for every $(x_1,u_1; \cdots; x_k,u_k) \in \mathrm{support}(f)$ and $(x_{k+1},u_{k+1};\cdots;x_{2k}, u_{2k}) \in \mathrm{support}(g)$.
Since there are $(2k)! -(k!)^2$ such permutations $\sigma$, we conclude that
\begin{align*}
\left | \E{N(f)N(g)} - \E{N(f)}\E{N(g)} \right | & \leq \frac{(2k)!C^{2k}}{\mathrm{dist}(A,B)^2+1}
\int \limits_{(\Z \times \R_{\geq 0})^{2k}} |f g| \, d \big (\#_{\Z} \otimes \Lb(\R_{\geq 0}) \big)^{\otimes 2k} \\
& = \frac{(2k)!C^{2k}}{\mathrm{dist}(A,B)^2+1}\, ||f||_1 ||g||_1.
\end{align*}

Let $\F^k(A)$ be the sigma-algebra generated by the random variables $N(f)$, where $f : A^k \to \R$
is continuous and compactly supported. The bound \eqref{eqn:tail1} implies that if $A$ and $B$ are disjoint,
$X$ is $\F^k(A)$-measurable and $Y$ is $\F^k(B)$-measurable, then,
\begin{equation} \label{eqn:tail2}
| \E{XY} - \E{X}\E{Y}| \leq \frac{(2k)! C^{2k}}{\mathrm{dist}(A,B)^2 + 1}\, \E{|X|} \E{|Y|}\,.
\end{equation}

The bound in \eqref{eqn:tail2} implies ergodicity of $\X_{\rm{edge}}$ as follows.
Let $E \in \F_{\rm{inv}}$. Given $0 < \eps < 1$, we may choose an event $E' \in \F^k([-n,n] \times \R_{\geq 0})$,
for some $k$ and $n$, such that $\pr{E \Delta E'} < \eps$. Since $\X_{\rm{edge}}$ is invariant
under $\tau^h$, we have that $\pr{ \tau^h E \Delta \tau^h E'} = \pr{E \Delta E'}$ for every $h$.
Therefore by the triangle inequality,
$$ | \pr{E' \cap \tau^h E'} - \pr{E \cap \tau^h E}| \leq \pr{E' \Delta E} + \pr{\tau^h E \Delta \tau^h E'} \leq 2 \eps.$$
Due to invariance of $E$ this implies that $|\pr{E' \cap \tau^h E'} - \pr{E}| \leq 2 \eps$.

Set $h = n + m$ for an integer $m \geq 1$. Then $\tau^h E' \in \F^k([n+2m,3n+2m] \times \R_{\geq 0})$.
We now apply \eqref{eqn:tail2} with $A = [-n,n] \times \R_{\geq 0}$ and $B = [n+2m,3n+2m] \times \R_{\geq 0}$,
observing that $\mathrm{dist}(A,B) = 2m$. Since $\pr{\tau^h E'} = \pr{E'}$ by translation invaraince, we infer that
$$ \left | \pr{E' \cap \tau^h E'} - \pr{E'}^2 \right | \leq \frac{(2k)!C^{2k}}{4m^2}.$$
Since $| \pr{E'} - \pr{E}| \leq \eps$, we conclude that
$$|\pr{E} - \pr{E}^2| \leq  \frac{(2k)!C^{2k}}{4m^2} + 5 \eps.$$
Letting $m \to \infty$ followed by $\eps \to 0$ shows that $\pr{E} = \pr{E}^2$, as required.
\end{proof}

\begin{rem} \label{rem:ergodic}
The proof above may be used to deduce that $\X_{\rm{edge}}$ is in fact space-time mixing.
\end{rem}

\begin{lemma} \label{lem:intensity}
Almost surely, $\X_{\rm{edge}}$ has an unbounded collection of points on every line $\{x\} \times \R_{\geq 0}$.
In fact, the following holds. Let $N_x(t) = \# \big (\X_{\mathrm{edge}} \cap (\{x\}\times [0,t]) \big)$.
For every $x$, the sequence $N_x(t)/t \to 1/\pi$ in probability as $t \to \infty$.
\end{lemma}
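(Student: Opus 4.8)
I would work one line at a time. By Proposition \ref{prop:Kedge}, $\X_{\mathrm{edge}}\cap(\{x\}\times\R_{\ge 0})$ is a determinantal point process on $\R_{\ge 0}$ (reference measure $\Lb(\R_{\ge 0})$) with kernel $K(u,v):=K_{\mathrm{edge}}(x,u;x,v)=\tfrac{\sin(u-v)}{\pi(u-v)}+(-1)^x\tfrac{\sin(u+v)}{\pi(u+v)}$, so that $N_x(t)=\#\big(\X_{\mathrm{edge}}\cap(\{x\}\times[0,t])\big)$ has the usual determinantal first and second moments. The plan is: (i) show $\E{N_x(t)}=t/\pi+O(1)$; (ii) show $\mathrm{Var}(N_x(t))=O(\log t)=o(t^2)$; (iii) conclude $N_x(t)/t\to 1/\pi$ in probability by Chebyshev; (iv) deduce the unboundedness statement from monotonicity of $N_x(\cdot)$ and convergence in probability, then take a countable intersection over $x\in\Z$.

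\textbf{The mean.} From the diagonal formula \eqref{eqn:diagonal}, $K(u,u)=\tfrac1\pi+(-1)^x\tfrac{\sin 2u}{2\pi u}$, so $\E{N_x(t)}=\int_0^t K(u,u)\,du=\tfrac t\pi+(-1)^x\tfrac1{2\pi}\int_0^{2t}\tfrac{\sin s}{s}\,ds=\tfrac t\pi+O(1)$, since the Dirichlet integral $\int_0^R\tfrac{\sin s}{s}\,ds$ is bounded uniformly in $R$ (and tends to $\pi/2$). In particular $\E{N_x(t)}/t\to 1/\pi$.

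\textbf{The variance — the crux.} The determinantal second moment gives $\mathrm{Var}(N_x(t))=\int_0^t K(u,u)\,du-\int_0^t\!\int_0^t K(u,v)^2\,du\,dv$. Bounding $K(u,v)^2$ termwise is useless: it only yields $O(t^2)$, and the cancellation between the two pieces must be used. The key point is that, by Proposition \ref{prop:simplekernel}, for $x_1=x_2=x$ one has $K(u,v)=\tfrac2\pi\int_0^1\cos(\xi u+\tfrac\pi2 x)\cos(\xi v+\tfrac\pi2 x)\,d\xi$, which equals $\tfrac2\pi\int_0^1\cos(\xi u)\cos(\xi v)\,d\xi$ for $x$ even and $\tfrac2\pi\int_0^1\sin(\xi u)\sin(\xi v)\,d\xi$ for $x$ odd. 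In either case this is precisely the integral kernel of the orthogonal projection $\mathcal F\, M_{\mathbf 1_{[0,1]}}\,\mathcal F$ on $L^2(\R_{\ge 0})$, where $\mathcal F$ is the Fourier cosine (resp.\ sine) transform, a unitary involution. Hence $K$ is an orthogonal projection and satisfies the reproducing identity $\int_0^\infty K(u,w)K(w,v)\,dw=K(u,v)$; putting $v=u$ gives $\int_0^\infty K(u,w)^2\,dw=K(u,u)$. Subtracting from the variance formula yields the clean identity
\[
\mathrm{Var}(N_x(t))=\int_0^t\!\int_t^\infty K(u,v)^2\,dv\,du .
\]
This I would bound with the two elementary estimates $|K(u,v)|\le\tfrac2\pi$ and $|K(u,v)|\le\tfrac1{\pi|u-v|}+\tfrac1{\pi(u+v)}$ (both immediate from \eqref{eqn:diagonal}). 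On the corner region $\{0\le u\le t\le v,\ v-u\le 1\}$, of area $\le\tfrac12$, the first bound gives an $O(1)$ contribution; on $v-u>1$ one has $K(u,v)^2\le\tfrac{2}{\pi^2}\big((u-v)^{-2}+(u+v)^{-2}\big)$ and $\int_0^t\!\int_{\max(t,u+1)}^\infty(v-u)^{-2}\,dv\,du=O(\log t)$ while $\int_0^t\!\int_t^\infty(u+v)^{-2}\,dv\,du=\log 2$. Altogether $\mathrm{Var}(N_x(t))=O(\log t)$. (The sharp constant is $\sim\tfrac1{\pi^2}\log t$, matching the Gaudin–Mehta number variance, but $o(t^2)$ suffices.)

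\textbf{Conclusion.} Chebyshev's inequality with $\E{N_x(t)}=t/\pi+O(1)$ and $\mathrm{Var}(N_x(t))=O(\log t)$ gives, for fixed $\eps>0$ and $t$ large, $\pr{|N_x(t)/t-1/\pi|>\eps}\le\pr{|N_x(t)-\E{N_x(t)}|>\tfrac\eps2 t}\le O_\eps(\log t/t^2)\to 0$, i.e.\ $N_x(t)/t\to 1/\pi$ in probability. Since $N_x(t)$ is non-decreasing in $t$, it has an a.s.\ limit $N_x(\infty)\in[0,\infty]$; if $\pr{N_x(\infty)<\infty}>0$ then $N_x(t)/t\to 0$ on a set of positive probability, contradicting convergence in probability to $1/\pi$, so $N_x(\infty)=\infty$ almost surely, and intersecting over the countably many $x\in\Z$ shows this holds for all $x$ simultaneously, a.s. The only real obstacle is step (ii): one must recognize that the per-line kernel is an orthogonal projection so that $\mathrm{Var}(N_x(t))$ collapses to the off-diagonal block integral $\int_0^t\!\int_t^\infty K^2$; the remaining ingredients (the Dirichlet-integral evaluation of the mean, the elementary integral bounds, Chebyshev, and the monotonicity argument) are routine.
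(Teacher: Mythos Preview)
Your proof is correct but takes a different route from the paper for the variance bound. The paper observes that since $K(u,v)=K(v,u)$ on each line, one has $K(u,v)K(v,u)=K(u,v)^2\ge 0$, and hence
\[
\E{N_x(t)(N_x(t)-1)}=\Big(\int_0^t K(u,u)\,du\Big)^2-\int_0^t\!\int_0^t K(u,v)^2\,du\,dv\le \E{N_x(t)}^2,
\]
so $\mathrm{Var}(N_x(t))\le \E{N_x(t)}=O(t)$ in one line. This already suffices for Chebyshev and convergence in probability. By contrast, you identify $K$ as an orthogonal projection (via the Fourier cosine/sine transform) to obtain the exact identity $\mathrm{Var}(N_x(t))=\int_0^t\int_t^\infty K(u,v)^2\,dv\,du$ and then bound this by $O(\log t)$. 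Your argument is correct and yields a sharper variance estimate, but at the cost of invoking the projection structure; the paper's bound is cruder but entirely elementary. Your monotonicity argument for almost sure unboundedness is also slightly different from the paper's (which passes to a subsequence along which a.s.\ convergence holds), though both are routine.
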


\begin{proof}
Fix $x \in \Z$. Using part (II) of Proposition \ref{prop:Kedge} we see that for any interval $[a,b] \subset \R_{\geq 0}$,
\begin{equation} \label{eqn:expdensity}
\E{ N_x([a,b])} = \int_a^b K_{\rm{edge}}(x,u,x,u)\,du
= \frac{b-a}{\pi} + \frac{(-1)^x}{2 \pi} \int_{2a}^{2b} du\, \frac{\sin u}{u}\,.
\end{equation}
Observe from (\ref{eqn:expdensity}) that $\E{N_x(t)/t} = 1/\pi + O(1/t)$ as $t \to \infty$.

From (\ref{eqn:diagonal}) we see that $K_{\rm{edge}}(x,u_1;x,u_2)$ is symmetric in the variables $u_1$ and $u_2$. Thus,
$$\rho(u_1,u_2) := K_{\rm{edge}}(x,u_1;x,u_2) K_{\mathrm{edge}}(x,u_2;x,u_1) = K_{\rm{edge}}(x,u_1;x,u_2)^2 \geq 0.$$
From the relation (\ref{eqn:dpp}) for determinantal point processes we have that
\begin{align*}
\E{N_x(t) \cdot (N_x(t)-1)} &= \int_0^t \int_0^t \det [ K_{\mathrm{edge}}(x,u_i;x,u_j)]_{i,j=1,2} \, du_1 du_2\\
&= \left (\int_0^t K_{\rm{edge}}(x,u;x,u) \,du \right)^2 - \int_0^t \int_0^t \rho(u_1,u_2) \, du_1 du_2 \\
&\leq  \E{N_x(t)}^2.
\end{align*}
This inequality implies that $\mathrm{Var}(N_x(t)) \leq \E{N_x(t)}$.
Since $\E{N_x(t)} = (t/\pi) + O(1)$, Chebyshev's inequality implies that for any $\eps > 0$,
$$ \pr{ \left | \frac{N_x(t)}{t} - \frac{1}{\pi} \right | > \eps} =  \frac{O(1)}{\eps^2 \, t}.$$
This provides the claimed convergence in probability.

Convergence in probability implies that there is a sequence of times $t_k \to \infty$
such that $N_x(t_k)/t_k \to 1/\pi$ almost surely as $k \to \infty$.
This in turn implies that there is an unbounded collection of points of $\X_{\rm{edge}}$ on
$\{x\} \times \R_{\geq 0}$ almost surely. An union bound over $x$ provides the claim in the lemma.
\end{proof}

\begin{lemma} \label{lem:EGhelper}
The following event occurs almost surely. For every
$t > 0$ there exists a doubly infinite sequence of integers $x_i$, $i \in \Z$,
such that $\X_{\rm{edge}}$ contains no points on each of the segments $\{2x_i\} \times [0,t]$.
\end{lemma}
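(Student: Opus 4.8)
The plan is to reduce the statement to a fixed-parameter version and then take a countable intersection. First I would prove the following \emph{Claim}: for each fixed $t>0$, almost surely the set $N_t:=\{k\in\Z:\ \X_{\rm{edge}}\cap(\{2k\}\times[0,t])=\emptyset\}$ is unbounded both above and below. Granting this for every $t\in\{1,2,\dots\}$ simultaneously --- a countable intersection of full-probability events --- the lemma follows: given an arbitrary $t>0$, choose an integer $m\ge t$; since $[0,t]\subseteq[0,m]$ we have $N_m\subseteq N_t$, so $N_t$ is unbounded in both directions, and enumerating it as $\cdots<x_{-1}<x_0<x_1<\cdots$ gives the required doubly infinite sequence.

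To prove the Claim, fix $t>0$, and for $k\in\Z$ set $A_k=\{\X_{\rm{edge}}\cap(\{2k\}\times[0,t])=\emptyset\}$ and $p=\pr{A_0}$. The first step is to show $p>0$. By part (II) of Proposition~\ref{prop:Kedge}, $\X_{\rm{edge}}\cap(\{0\}\times\R_{\geq 0})$ is a determinantal point process on $[0,\infty)$ with correlation kernel $\mathcal K(u_1,u_2)=K_{\rm{edge}}(0,u_1;0,u_2)$, which by \eqref{eqn:diagonal} (equivalently Proposition~\ref{prop:simplekernel}) equals $\tfrac2\pi\int_0^1\cos(su_1)\cos(su_2)\,ds$. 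Hence, by the void-probability formula for determinantal processes (see \cite{Bor}), $p=\det(I-\mathcal K_t)$, where $\mathcal K_t$ is $\mathcal K$ restricted to $L^2[0,t]$. This kernel is real, symmetric and continuous, so $\mathcal K_t$ is self-adjoint and trace class, and it factors as $\mathcal K_t=A_tA_t^{*}$ with $A_t\colon L^2[0,1]\to L^2[0,t]$ given by $(A_tg)(u)=\sqrt{2/\pi}\int_0^1\cos(su)\,g(s)\,ds$. Then $\langle\mathcal K_tf,f\rangle=\|A_t^{*}f\|_{L^2[0,1]}^2=\tfrac2\pi\int_0^1\big|\int_0^t\cos(sv)f(v)\,dv\big|^2\,ds$; extending $f$ by zero to $\R$ and invoking Plancherel for the cosine transform shows this is at most $\|f\|_{L^2[0,t]}^2$, with equality forcing the cosine transform of $f$ to be supported in $[0,1]$, which is impossible for $f\neq0$ since that transform is real-analytic. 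Therefore $\mathcal K_t$ is a self-adjoint trace-class operator with spectrum in $[0,1)$, and since $\sum_i\lambda_i=\mathrm{tr}\,\mathcal K_t<\infty$ the product $\det(I-\mathcal K_t)=\prod_i(1-\lambda_i)$ is strictly positive.

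The second step upgrades $p>0$ to the Claim by ergodicity. By part (I) of Proposition~\ref{prop:Kedge}, $\X_{\rm{edge}}$ is invariant under the shift $\tau^1\colon(x,u)\mapsto(x+2,u)$, so $\pr{A_k}=p$ for every $k$ and $\ind{A_k}=\ind{A_0}\circ\tau^{\pm k}$; and Proposition~\ref{prop:tail} says $\tau^1$ acts ergodically, hence so does $\tau^{-1}$, since the two transformations have the same invariant $\sigma$-algebra. Applying Birkhoff's ergodic theorem to $f=\ind{A_0}$ under $\tau^1$ and under $\tau^{-1}$ gives $\tfrac1N\sum_{k=1}^N\ind{A_k}\to p$ and $\tfrac1N\sum_{k=1}^N\ind{A_{-k}}\to p$ almost surely. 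Since $p>0$, both $\{k>0:A_k\text{ occurs}\}$ and $\{k<0:A_k\text{ occurs}\}$ are almost surely infinite, which is precisely the Claim.

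The hard part will be the lower bound $p>0$, i.e.\ ruling out $1$ as an eigenvalue of the correlation operator of $\X_{\rm{edge}}$ on a single segment $\{x\}\times[0,t]$: the factorization $\mathcal K_t=A_tA_t^{*}$ through the truncated cosine transform, combined with the analyticity/Plancherel argument, is the crux. The reductions (to one line, to integer $t$) and the passage from positive probability to ``infinitely often'' via the ergodic theorem are routine.
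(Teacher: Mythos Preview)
Your proof is correct and follows essentially the same scheme as the paper's: reduce to fixed $t$ by monotonicity and a countable intersection, then use translation invariance and ergodicity of $\X_{\rm{edge}}$ (Propositions~\ref{prop:Kedge} and~\ref{prop:tail}) together with Birkhoff's theorem to upgrade $\pr{A_0}>0$ to ``infinitely often in both directions.'' The paper handles the negative direction via reflection invariance rather than ergodicity of $\tau^{-1}$, but this is cosmetic.

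The one genuine difference is how $p>0$ is established. The paper simply quotes the Dyson asymptotics \eqref{eqn:Dyson} for $\pr{\Tb_{\rm{FS}}>t}$, which rests on a substantial body of literature on Fredholm determinants of the sine kernel. Your argument is more self-contained: you factor $\mathcal{K}_t=A_tA_t^{*}$ through the truncated cosine transform and use Plancherel plus analyticity of the transform of a compactly supported $L^2$ function to show $\|\mathcal{K}_t\|<1$, whence $\det(I-\mathcal{K}_t)>0$. This avoids the asymptotic machinery entirely and is a nice, direct route to the only nontrivial input the lemma needs.
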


\begin{proof}
By monotocity and an union (or rather intersection) bound over rational values of $t$,
it suffices to show that the event occurs almost surely for every fixed $t > 0$. Given a fixed
$t$, let $X_i$ be the indicator of the event that $\X_{\rm{edge}}$ has no points on
$\{2i\} \times [0,t]$. It suffices to show that almost surely infinitely many of the $X_i$s equal 1
for $i \geq 0$. Then, reflection invariance of $\X_{\rm{edge}}$ and another union bound
imply that almost surely a doubly infinite collection of the $X_i$s are equal to 1, as required.

Due to translation invariance of $\X_{\rm{edge}}$ the sequence $X_i$, $i = 0,1,2,\ldots$,
is stationary in that $(X_0,X_1, \ldots)$ has the same law as $(X_1,X_2, \ldots)$. It is
also ergodic by Proposition \ref{prop:tail}. Therefore, by the Ergodic Theorem,
$$ \lim_{n \to \infty}\, \frac{1}{n} \sum_{i=0}^{n-1} X_i \; =\; \pr{X_0 = 1},\;\; \text{almost surely}.$$
The probability that $X_0 = 1$ is the probability that $\X_{\rm{edge}}$ has no points in $\{0\} \times [0,t]$.
This is strictly positive by (\ref{eqn:Dyson}) below. As a result, an infinite number of the $X_i$s equal
1 whenever the limit in the above holds.
\end{proof}

\section{The local staircase shaped tableau} \label{sec:localtab}

The local staircase shaped tableau, henceforth, local tableau, is a random function on
\begin{equation} \label{eqn:infinitesst}
\st = \left \{ (x,y) \in \Z^2: y \geq 0, \, x \equiv y \;(\text{mod} \; 2) \right \}.
\end{equation}
Figure \ref{fig:st} provides an illustration. The \emph{rows} and \emph{columns} of $\st$ are
given by the diagonal lines
\begin{equation*}
\text{row} \; 2x = \{(2x-k,k): k \geq 0\}, \quad \text{column} \; 2x = \{(2x+k,k): k \geq 0\}.
\end{equation*}

\begin{figure}[htpb]
\begin{center}
\includegraphics[scale=0.5]{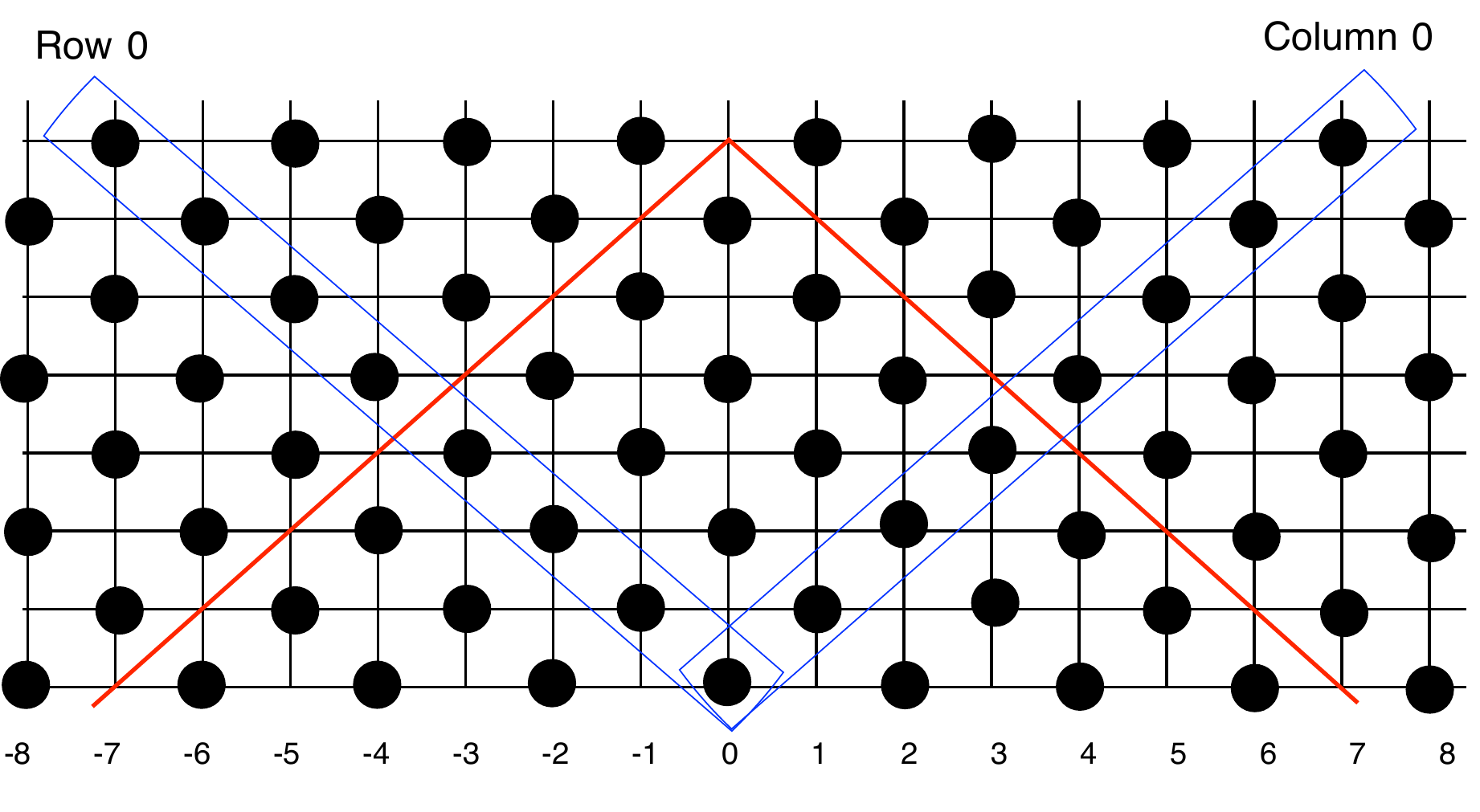}
\caption{\small{Black dots depicts cells of $\st$. Rows and columns are lines that start at even integers
and go in the directions $(-1,1)$ and $(1,1)$, respectively. The region bounded by the red lines is the
embedding of $\Delta_8$ into $\st$.}}
\label{fig:st}
\end{center}
\end{figure}

In order to define tableaux on $\st$ and their convergence, we first explain the topology on $[0,\infty]$
since tableau entries will take values there (we allow the value $\infty$). The topology on $[0,\infty]$ is the
usual topology on $\R_{\geq 0}$ extended in the natural way by stipulating that a sequence
converges to $\infty$ if its values diverge to $\infty$, possibly stabilizing to the value $\infty$. In this case
we will say that the sequence \emph{grows to} $\infty$.  For example, $1,2,3,4, \ldots$ grows to $\infty$,
as does $1,\infty,2,\infty, \ldots$, as well as $1,\infty,\infty,\infty,\ldots$.

A \emph{tableau} is a function $T: \st \to [0,\infty]$  such that it satisfies the \emph{tableau constraints}
\begin{align} \label{eqn:tabconstraint}
\text{I)} &\;T(x,y) \leq \min \left \{ T(x-1,y+1), \, T(x+1,y+1) \right \} \;\; \text{for every}\;\; (x,y) \in \st. \\ \nonumber
\text{II)} &\; \text{Along every row and column of}\;\; T\;\; \text{the entries grow to }\infty.
\end{align}

The YD $\sst$ embeds into $\st$ via $(i,j) \in \sst \mapsto (j-i - \ind{n \, \text{odd}}, n-i-j) \in \st$.
This is a rotation that puts row $r$ of $\sst$ on row $2(\lfloor n/2 \rfloor - r)$ of $\st$; see Figure \ref{fig:st}.
In this manner any PYT $T$ of shape $\sst$ embeds as a tableau
$F_T : \st \to [0,\infty]$ by setting
\begin{equation} \label{eqn:pytembed}
F_T(x,y) = \begin{cases}
n \, \Big (1-T \big ( \lfloor \frac{n}{2} \rfloor - \frac{x+y}{2}, \lfloor \frac{n}{2} \rfloor + \frac{x-y}{2} \big) \Big),
& \text{if}\;\; \big (\lfloor \frac{n}{2} \rfloor - \frac{x+y}{2}, \lfloor \frac{n}{2} \rfloor + \frac{x-y}{2} \big) \in \sst; \\
\infty, & \text{otherwise}.
\end{cases}
\end{equation}
By an abuse of notation we denote $F_T$ by $T$.

We say that a sequence of tableaux $T_n$ converges if there is a tableau $T$ such
that, in the aforementioned topology on $[0,\infty]$, $T_n(x,y) \to T(x,y)$ for every $(x,y) \in \Delta_{\infty}$.
Note we stipulate that a limit of tableaux remain a tableau.

A random tableau $\mathbf{T} : \st \to [0,\infty]$ is a Borel probability measure on tableaux
with respect to the topology above. Convergence of a sequence of random tableaux means
weak convergence with respect to this topology.

\subsection{Bulk local limit of staircase shaped tableaux} \label{sec:LST}

Section \ref{sec:limitsetup} describes how PYTs of a given shape are in
bijection with ensembles of non-increasing and non-intersecting paths whose initial
positions are given in terms of the shape. We describe the bijection explicitly for
tableaux defined on $\st$ as it will be useful in the proof of the local limit
theorem.

Consider an ensemble of paths $\{ p(2x,u) \}$, for $x \in \Z$ and $u \in \R_{\geq 0}$, that
satisfy the following.
\begin{align} \label{eqn:pathconstraints}
\text{I)} & \; p(x,\cdot) : \R_{\geq 0} \to \Z + \frac{1}{2} \;\;\text{is left continuous, non-increasing with}\;\; p(x,0)=2x + \frac{1}{2}. \\
\nonumber
\text{II)} & \; p(x,\cdot) \;\;\text{are non-intersecting:}\;\; p(x,u) > p(x-1,u) \;\;\text{for every}\;\; x \in \Z, u \in \R_{\geq 0}. \\
\nonumber
\text{III)} &\; \text{The jumps of the paths as defined by}\;\; \eqref{eqn:sytjumps}\;\; \text{is a discrete subset of}\;\ \Z \times \R_{\geq 0}.
\end{align}

For a tableau $T : \st \to [0,\infty]$, paths satisfying (\ref{eqn:pathconstraints})
are obtained by setting $p(x,u) = 2x + \frac{1}{2}$ for $0 \leq u \leq T(2x,0)$, and for $k \geq 1$,
$$p(x,u) = p(x,0) - k \;\;\text{if}\;\; T(2x-k+1,k-1) < u \leq T(2x-k,k).$$
In other words, $p(x,\cdot)$ is left continuous and decreases by integer units at times indexed by
row $2x$ of $T$. The paths are non-intersecting due to the columns of $T$ being non-decreasing.
Indeed, $p(x,u) - p(x-1,u) = 2 +N_{x-1}(u) - N_x(u)$, where $N_x(u)$ is the number of entries of
$T$ on row $2x$ with value at most $u$. Due to the columns being non-decreasing,
$N_{x-1}(u) \geq N_x(u) -1$, and thus, $p(x,u) - p(x-1,u) \geq 1$.
The jumps of the paths form a discrete set due to the rows and columns
of $T$ growing to $\infty$. When a row entry equals $\infty$ then the corresponding
path jumps only a finite number of times.

Let $X$ denote the jumps for the ensemble of paths associated to a tableau $T$ on $\st$.
The jumps can be read off from $T$ in the following manner. For every $x \in \Z$, the
jumps on the line $\{x\} \times \Z_{\geq 0}$ are the entries of $T$ whose cells have
first coordinate $x$ in $\st$. More precisely, if $u$ is the $k$-th smallest point of $X$
on $\{x\} \times \R_{\geq 0}$ then
\begin{equation} \label{eqn:kthjump}
u =T \big(x, \, 2k-1-\ind{x \, \text{even}} \big).
\end{equation}
If there are less than $k$ points on $\{x\} \times \R_{\geq 0}$, there is no such
$u$ and the tableau entry above equals $\infty$.

To see this, observe that $u$ is the time when the path starting at initial position
$(x+ \frac{1}{2})+2k-1 - \ind{x\,\text{even}}$ jumps for the $(2k-\ind{x\,\text{even}})$-th time.
Indeed, this is the $k$-th path starting at or to the right of position $x+ \frac{1}{2}$
and it hits position $x - \frac{1}{2}$ after jump number $2k-\ind{x\,\text{even}}$.
The first $k$ jumps on $\{x\} \times \R_{\geq 0}$ are the times when the first $k$
paths starting at or to the right of position $x + \frac{1}{2}$ hits position $x - \frac{1}{2}$.
Also, when there is no such $u$ it means that the path starting from
$(x+ \frac{1}{2})+2k-1 - \ind{x\,\text{even}}$ has exhausted its jumps and it does not
get to position $x - \frac{1}{2}$.

Let $M_{T \to X}$ denote the map from tableaux defined on $\st$ to jumps
of paths satisfying (\ref{eqn:pathconstraints}). This map is invertible
with the inverse given by the relation \eqref{eqn:kthjump}. Namely, $T(x,y)$
is the $\big[(y+1+\ind{x\,even})/2 \big]$--th smallest jump of $X$ on $\{x\} \times \R_{\geq 0}$
with the convention that $T(x,y) = \infty$ if no such jump exists. Let $M_{X \to T}$
denote the inverse map.

\begin{lemma} \label{lem:homeo}
The set of jumps of paths satisfying (\ref{eqn:pathconstraints}) is closed in the topology
on discrete subsets of $\Z \times \R_{\geq 0}$. The map $M_{T \to X}$ is a homeomorphism
from the set of tableaux on $\st$ to the set of jumps of paths satisfying
(\ref{eqn:pathconstraints}), with inverse given by $M_{X \to T}$.
\end{lemma}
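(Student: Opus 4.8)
The plan is to establish in turn: that $M_{T\to X}$ and $M_{X\to T}$ are mutually inverse bijections; that $M_{T\to X}$ is continuous with continuous inverse; and finally that the image $\mathcal J$ of $M_{T\to X}$ — the set of jump configurations of ensembles obeying \eqref{eqn:pathconstraints} — is closed. Throughout, the topology on the domain is the product (pointwise) topology inherited from $[0,\infty]^{\st}$, and on the target the topology on discrete subsets of $\Z\times\R_{\geq 0}$.

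\textbf{Bijectivity.} First I would check that for a tableau $T$ on $\st$ the associated paths satisfy \eqref{eqn:pathconstraints}: (I) holds by the defining recipe ($p(x,0)=2x+\tfrac12$, with left continuity and monotonicity built in); (II) follows from the identity $p(x,u)-p(x-1,u)=2+N_{x-1}(u)-N_x(u)\ge 1$, where $N_x(u)$ counts the entries $\le u$ on the corresponding row of $T$, using that the columns of $T$ are non-decreasing, which is part of constraint I of \eqref{eqn:tabconstraint}; and (III) follows from constraint II of \eqref{eqn:tabconstraint}, since growth to $\infty$ along every row and column forces the entries to grow to $\infty$ along every vertical line $\{x\}$ as well, leaving only finitely many jumps inside any bounded rectangle. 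Thus $M_{T\to X}$ maps tableaux into $\mathcal J$, and onto $\mathcal J$ by definition of $\mathcal J$. The relation \eqref{eqn:kthjump}, verified by following the crossings of the paths, says exactly that $M_{X\to T}\circ M_{T\to X}=\mathrm{id}$ on tableaux; hence $M_{T\to X}$ is injective, so it is a bijection onto $\mathcal J$ with inverse $M_{X\to T}|_{\mathcal J}$.

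\textbf{Homeomorphism.} Suppose $T_n\to T$ pointwise, all tableaux. For a fixed line $\{x\}$, the $k$-th smallest point of $X_n=M_{T\to X}(T_n)$ on $\{x\}$ equals $T_n\big(x,\,2k-1-\ind{x\,\mathrm{even}}\big)$ by \eqref{eqn:kthjump}, and this converges in $[0,\infty]$ to the $k$-th point of $X=M_{T\to X}(T)$ on $\{x\}$ (with $\infty$ read as ``no such point''); since the entries of a tableau are non-decreasing along every vertical line, this promotes to $\#(C\cap X_n)\to\#(C\cap X)$ for every box $C$ whose boundary carries no point of $X$, i.e.\ $X_n\to X$. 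Conversely, if $X_n\to X$ with $X_n,X\in\mathcal J$, then on each compact $\{x\}\times[0,t]$ the number of points of $X_n$ converges to that of $X$, so the ordered points of $X_n$ on $\{x\}$ converge to those of $X$, any surplus points of $X_n$ being forced out to $+\infty$; read off through \eqref{eqn:kthjump} this is precisely the statement $T_n(x,y)\to T(x,y)$ for every $(x,y)\in\st$. Hence $M_{T\to X}$ is a homeomorphism with inverse $M_{X\to T}$.

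\textbf{Closedness of $\mathcal J$ — the main point.} Let $X_n\in\mathcal J$ and $X_n\to X$. By compactness of $[0,\infty]^{\st}$, the tableaux $T_n=M_{X\to T}(X_n)$ admit a subsequential pointwise limit $T$. Constraint I of \eqref{eqn:tabconstraint} is a closed condition on $[0,\infty]^{\st}$, so $T$ inherits it. The delicate part is constraint II: growth to $\infty$ along rows and columns is \emph{not} a closed condition by itself — a pointwise limit of tableaux need not be a tableau — so I would argue by contradiction. If some row or column of $T$ stabilized at a finite value $B$, then reading off the jumps along that row or column via \eqref{eqn:kthjump}, and using both the monotonicity of the $T_n$ along vertical lines and the non-intersection constraint (II) for the path ensembles underlying the $X_n$, one shows that $X_n$ would be forced to accumulate an unbounded number of points inside a single fixed compact rectangle; this contradicts $X_n\to X$, since for every fixed compact $C$ one has $\limsup_n\#(C\cap X_n)\le\#(C\cap X)<\infty$, the limit $X$ being locally finite. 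Hence $T$ is a genuine tableau, so $X=M_{T\to X}(T)\in\mathcal J$ by the continuity established above together with uniqueness of limits. I expect this last step — converting a ``stalled'' row or column into local pile-up of jumps, exploiting non-intersection in the right way — to be the crux of the proof; the bijectivity and continuity statements are essentially bookkeeping on top of the path/tableau dictionary set up earlier in this section.
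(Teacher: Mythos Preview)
Your approach is essentially the paper's: establish that $M_{T\to X}$ and $M_{X\to T}$ are mutual inverses, prove $M_{T\to X}$ is continuous via \eqref{eqn:kthjump}, and then handle closedness together with continuity of $M_{X\to T}$ by showing that the pointwise limit $T_\infty$ of $T_n=M_{X\to T}(X_n)$ is again a tableau. The paper does not pass to a subsequence (it observes directly that the $k$-th point of $X_n$ on $\{x\}$ converges to the $k$-th point of $X_\infty$), and it dispatches constraint II of \eqref{eqn:tabconstraint} in one line: ``the rows and columns will grow to $\infty$ due to $X_\infty$ being a discrete set.''

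You are right to single out constraint II as the crux, but the contradiction you sketch does not go through as stated, and the difficulty is real. If row $2x_0$ of $T_\infty$ were bounded by $B$, then via \eqref{eqn:kthjump} and constraint I one deduces that line $\{2x_0-j\}$ of $X_\infty$ carries at least $\lceil (j{+}1)/2\rceil$ points in $[0,B]$ for every $j$ --- but these are \emph{different} compacts for different $j$, not one fixed rectangle, so neither discreteness of $X_\infty$ nor the bound $\limsup_n\#(X_n\cap C)\le\#(X_\infty\cap C)$ yields a contradiction. In fact this is not merely a gap in the argument: one can build such an $X_\infty$ explicitly. Place $m{+}1$ points on each line $\{-2m\}$ and on each line $\{-2m{-}1\}$, say at $\tfrac{1}{m+2},\tfrac{2}{m+2},\dots,\tfrac{m+1}{m+2}$, and no points on lines $\{x\}$ with $x>0$; one checks the interlacing (equivalently, constraint I for $T_\infty$) holds, yet row $0$ of $T_\infty$ has all entries in $[0,1)$. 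This $X_\infty$ is not in $\mathcal J$ --- the path $p(0,\cdot)$ would have to make infinitely many jumps before time $1$ --- but it \emph{is} the vague limit of $X_n\in\mathcal J$: take $T_n$ to agree with $T_\infty$ for $y\le n$ and set $T_n(x,y)=1+(y-n)$ for $y>n$ on the support of $T_\infty$, extending by $\infty$ elsewhere; each $T_n$ is a genuine tableau, and for every fixed compact $C$ one has $X_n\cap C=X_\infty\cap C$ once $n$ is large enough. So the closedness assertion, as literally stated, appears to fail; both your proposed contradiction and the paper's one-line justification run into the same obstruction. For the application in Theorem~\ref{thm:tableaulocallimit} one can sidestep this, since the additional almost-sure properties of $\X_{\rm edge}$ (in particular those in Lemma~\ref{lem:intensity} and Lemma~\ref{lem:EGhelper}) rule out exactly this kind of pile-up.
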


\begin{proof}
The maps $M_{T \to X}$ and $M_{X \to T}$ are inverses by design.
We must show that they are continuous. We begin with continuity of $M_{T \to X}$.
Let $T_n$ be a sequence of tableaux such that $T_n$ converges to a tableau $T_{\infty}$.
Let $X_n = M_{T \to X}(T_n)$ and $X_{\infty} = M_{T \to X}(T_{\infty})$.
Recall from Section \ref{sec:dpp} that convergence of $X_n$ to $X_{\infty}$
requires that for every $x \in \Z$ and $k \geq 1$, the $k$-th smallest point of $X_n$
on $\{x\} \times \R_{\geq 0}$ must converge to the corresponding point of $X_{\infty}$,
while accounting for the case that there may be less than $k$ points.

Let $y = 2k-1-\ind{x \, \text{even}}$. Then, by \eqref{eqn:kthjump}, $T_n(x,y)$ is the $k$-th smallest point
of $X_n$ on $\{x\} \times \R_{\geq 0}$ and similarly for $T_{\infty}(x,y)$.
There are two cases: $T_{\infty}(x,y) < \infty$ or $T_{\infty}(x,y) = \infty$.
In the former case, the $k$-th smallest point of $X_n$ on $\{x\} \times \R_{\geq 0}$
is eventually finite and the same for $X_{\infty}$. Moreover, we have convergence
of these points since $T_n(x,y) \to T_{\infty}(x,y)$. In the latter case, given any
bounded subset of $\{x\} \times \R_{\geq 0}$, the $k$-th smallest point of $X_n$
eventually escapes the set or is non-existent due to $T_n(x,y) \to \infty$.
This is as required since $X_{\infty}$ has no $k$-th smallest point on $\{x\} \times \R_{\geq 0}$.
This proves that $M_{T \to X}$ is continuous.

Now we show that the set of jumps of paths satisfying (\ref{eqn:pathconstraints}) is
a closed set in the space of discrete subsets of $\Z \times \R_{\geq 0}$, as well as that
that $M_{X \to T}$ is continuous. Suppose $X_n$ is a sequence of such jumps sets and that
it converges to a discrete subset $X_{\infty}$.

Let $T_n = M_{X \to T}(X_n)$. First, we show that $T_n(x,y)$ converges for every $(x,y) \in \st$.
Indeed, $T_n(x,y)$ is the $k$-th smallest point of $X_n$ on $\{x\} \times \R_{\geq 0}$ for
$k = (y+1+\ind{x\,\text{even}})/2$. Therefore, convergence of $X_n$ to $X_{\infty}$
implies that $T_n$ must converge to some function $T_{\infty}: \st \to [0,\infty]$.
Note that $T_{\infty}(x,y) = \infty$ if and only if $X_{\infty}$ has less than $k$ points
on $\{x\} \times \R_{\geq 0}$.

The function $T_{\infty}$ is a tableau because the tableau inequalities from (\ref{eqn:tabconstraint})
continue to hold in the entry-wise limit, and the rows and columns will grow to $\infty$ due to
$X_{\infty}$ being a discrete set. Thus, consider $\hat{X}_{\infty} = M_{T \to X}(T_{\infty})$.
By the first part of the proof, $X_n \to \hat{X}_{\infty}$. But then, $\hat{X}_{\infty} = X_{\infty}$
because limits of discrete subsets of $\Z \times \R_{\geq 0}$ are unique. This shows both
the closure property of sets of jumps for paths satisfying (\ref{eqn:pathconstraints}) and
the continuity of $M_{X \to T}$.
\end{proof}

We now state the local limit theorem for Poissonized staircase shaped tableaux in the bulk.
For $\alpha \in (-1,1)$, let $c_n = 2 ( \lfloor \frac{n}{2} \rfloor - \lfloor \frac{(1+\alpha)n}{2} \rfloor)$.
For a PYT $T$ having shape $\sst$, embed it as a tableau on $\st$ according to \eqref{eqn:pytembed}
and consider the rescaled tableau $T_{\alpha,n} : \st \to [0,\infty]$ defined by
\begin{equation} \label{eqn:embeddedsyt}
T_{\alpha,n}(x,y) = \sqrt{1-\alpha^2} \; T(x+c_n, y).
\end{equation}
Let $\T_{\sst}$ be a uniformly random PYT of shape $\sst$ and denote by $\T_{\alpha,n}$
the random tableau associated to $\T_{\sst}$ by (\ref{eqn:embeddedsyt}).

\begin{thm} \label{thm:tableaulocallimit}
The sequence of random Poissonized tableaux $\T_{\alpha,n}$ converges weakly to
a random tableau $\T_{\rm{edge}}$. Moreover, the law of $\T_{\rm{edge}}$
is $M_{X \to T}( \X_{\rm{edge}})$.
\end{thm}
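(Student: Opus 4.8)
The plan is to transport the already-established convergence of jump processes through the homeomorphism of Lemma \ref{lem:homeo}, so that essentially no new analysis is needed. Write $\mathcal{J}$ for the set of jump configurations of path ensembles satisfying \eqref{eqn:pathconstraints}, regarded as a subset of the space of discrete subsets of $\Z\times\R_{\geq 0}$. By Lemma \ref{lem:homeo}, $\mathcal{J}$ is closed in that space and $M_{X\to T}:\mathcal{J}\to\{\text{tableaux}\}$ is a homeomorphism with inverse $M_{T\to X}$. The analytic content — the asymptotics of the double contour integral — lives entirely in Theorem \ref{thm:main}, which together with Proposition \ref{prop:simplekernel} gives that $\X_{\alpha,n}$ converges weakly to $\X_{\rm{edge}}$ as point processes on $\Z\times\R_{\geq 0}$, in exactly the topology supplied by Lemma \ref{lem:dpp} (including the boundary line $\Z\times\{0\}$). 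The work remaining is to connect $\T_{\alpha,n}$ to $\X_{\alpha,n}$ and then push the limit through $M_{X\to T}$.

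First I would verify that $M_{T\to X}(\T_{\alpha,n})=\X_{\alpha,n}$ as point processes on $\Z\times\R_{\geq 0}$, where $\X_{\alpha,n}$ is the rescaled jump process of \eqref{eqn:edgeprocess}. This is a routine but coordinate-heavy check: one matches the rotation and rescaling of \eqref{eqn:pytembed}--\eqref{eqn:embeddedsyt} defining $\T_{\alpha,n}$ against the change of variables in \eqref{eqn:edgeprocess}, and confirms that the jump construction of Section \ref{sec:localtab} for tableaux on $\st$ is compatible with the one of Section \ref{sec:limitsetup} for PYTs of a fixed shape. Granting this, note that the embedding of a PYT of shape $\sst$ is a genuine tableau on $\st$, and that scaling by the positive constant $\sqrt{1-\alpha^2}$ together with the even shift $c_n$ in \eqref{eqn:embeddedsyt} preserves the tableau constraints \eqref{eqn:tabconstraint} and the lattice $\st$; hence $\T_{\alpha,n}$ is almost surely a tableau, so $\X_{\alpha,n}=M_{T\to X}(\T_{\alpha,n})$ is supported on $\mathcal{J}$, and $\T_{\alpha,n}=M_{X\to T}(\X_{\alpha,n})$.

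Next I would run the weak-convergence machinery. By Theorem \ref{thm:main} and Proposition \ref{prop:simplekernel}, $\X_{\alpha,n}\Rightarrow\X_{\rm{edge}}$ in the space of discrete subsets of $\Z\times\R_{\geq 0}$. Since each $\X_{\alpha,n}$ is supported on the closed set $\mathcal{J}$, the portmanteau theorem gives $\pr{\X_{\rm{edge}}\in\mathcal{J}}\ge\limsup_n \pr{\X_{\alpha,n}\in\mathcal{J}}=1$; in particular $M_{X\to T}(\X_{\rm{edge}})$ is almost surely a genuine tableau, and we set $\T_{\rm{edge}}:=M_{X\to T}(\X_{\rm{edge}})$. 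Because all the laws $\X_{\alpha,n}$ and $\X_{\rm{edge}}$ are concentrated on the closed, hence Polish, subspace $\mathcal{J}$, the weak convergence restricts to weak convergence within $\mathcal{J}$ (a bounded continuous function on $\mathcal{J}$ extends boundedly and continuously to the ambient space). The continuous mapping theorem applied to the continuous map $M_{X\to T}:\mathcal{J}\to\{\text{tableaux}\}$ of Lemma \ref{lem:homeo} then yields $\T_{\alpha,n}=M_{X\to T}(\X_{\alpha,n})\Rightarrow M_{X\to T}(\X_{\rm{edge}})=\T_{\rm{edge}}$, which is the assertion of the theorem.

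The only real subtleties sit at this interface, not in any new estimate. One is the coordinate bookkeeping in the identification $M_{T\to X}(\T_{\alpha,n})=\X_{\alpha,n}$, in particular reconciling the choice of $c_n$ in \eqref{eqn:embeddedsyt} with the one in \eqref{eqn:edgeprocess}: both are within a bounded amount of $\pm\alpha n$ with the parity required by Theorem \ref{thm:main}, the rotation \eqref{eqn:pytembed} accounts for a parity shift and a sign flip, and the remaining freedom is absorbed by the translation and reflection invariance of $\X_{\rm{edge}}$ from Proposition \ref{prop:Kedge}. The other is ensuring that the convergence supplied by Theorem \ref{thm:main} is in precisely the right topology — convergence on $\Z\times\R_{\geq 0}$ including $\Z\times\{0\}$, which is the no-accumulation-at-the-boundary clause built into that theorem — and that restricting weak convergence to the closed subspace $\mathcal{J}$ and then invoking the continuous mapping theorem is legitimate. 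It is worth noting that this route via closedness of $\mathcal{J}$ sidesteps having to establish directly the interlacing property and the almost-sure infinitude of $\X_{\rm{edge}}$ on each line from Lemma \ref{lem:intensity}; those facts are needed later for the construction of $S_{\rm{local}}$, but not here.
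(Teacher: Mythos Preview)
Your proposal is correct and follows essentially the same route as the paper's own proof: identify the jump process of $\T_{\alpha,n}$ with $\X_{\alpha,n}$, invoke Theorem \ref{thm:main} for the weak convergence $\X_{\alpha,n}\Rightarrow\X_{\rm{edge}}$, use the closedness of the set of jump configurations from Lemma \ref{lem:homeo} to conclude that $\X_{\rm{edge}}$ is almost surely such a configuration, and then apply the continuous mapping theorem with $M_{X\to T}$. The paper's version is more terse, but your added care about the portmanteau step and the coordinate bookkeeping (including the sign and parity of $c_n$) is well placed and does not depart from the paper's argument.
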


\begin{proof}
Observe that $|c_n - \alpha n| \leq 2$ for every $n$. With this choice of $c_n$,
the jump process associated to $\T_{\alpha,n}$ has law $\X_{\alpha,n}$ from (\ref{eqn:edgeprocess})
because these jumps are simply the jumps of $\T_{\sst}$ rescaled onto $\Z \times \R_{\geq 0}$
as in (\ref{eqn:edgeprocess}). Theorem \ref{thm:main} asserts that $\X_{\alpha,n}$ converges
weakly to $\X_{\rm{edge}}$. Due to being a weak limit of the jumps of ensembles of paths satisfying
(\ref{eqn:pathconstraints}), $\X_{\rm{edge}}$ is also almost surely the jumps of such an ensemble
of paths by the closure property given in Lemma \ref{lem:homeo}. The continuity of $M_{X \to T}$
then implies that $M_{X \to T}(\X_{\alpha,n})$ converges weakly to $M_{X \to T}(\X_{\rm{edge}})$.
Therefore, $\T_{\alpha,n}$ converges weakly to a random tableau
$\T_{\rm{edge}}$ having the law of $M_{X \to T}(\X_{\rm{edge}})$.
\end{proof}

\paragraph{\textbf{Bulk local limit theorem for random staircase shaped SYT}}
Recall $\Tb_{\sst}$ denotes a uniformly random SYT of shape $\sst$ and $N = \binom{n}{2}$.
Consider the rescaled tableau
\begin{equation} \label{eqn:rsctab}
\Tb^{\rm{rsc}}_{\sst}(i,j) = \frac{\mathbf{T}_{\sst}(i,j)}{N+1}.
\end{equation}

\begin{thm} \label{corr:LST}
The random tableau $\Tb^{\rm{rsc}}_{\sst}$ converges to $\T_{\rm{edge}}$ in the bulk local limit,
that is, under the embedding and rescaling from \eqref{eqn:embeddedsyt}.
\end{thm}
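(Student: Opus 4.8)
The plan is to prove Theorem~\ref{corr:LST} by a de-Poissonization coupling that compares $\Tb^{\rm rsc}_{\sst}$ with the Poissonized tableau $\T_{\sst}$, for which Theorem~\ref{thm:tableaulocallimit} already supplies the bulk local limit. Recall from Section~\ref{sec:poissonyt} that a uniformly random $\T_{\sst}$ may be realized from a uniformly random SYT $\Tb_{\sst}$ together with an independent uniformly random $Y \in [0,1]^{\sst}$: if $U_{(1)} < U_{(2)} < \cdots < U_{(N)}$, $N = \binom n2$, are the order statistics of the $N$ values of $Y$ (a.s.\ distinct), then setting $\T_{\sst}(c) = U_{(\Tb_{\sst}(c))}$ for every cell $c$ gives a uniformly random PYT of shape $\sst$. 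This places $\Tb^{\rm rsc}_{\sst}$ and $\T_{\sst}$ on one probability space, with the order statistics $(U_{(1)}, \ldots, U_{(N)})$ independent of $\Tb_{\sst}$. Let $\T_{\alpha,n}$ and $\widehat\T_{\alpha,n}$ denote the random tableaux on $\st$ obtained from $\T_{\sst}$ and $\Tb^{\rm rsc}_{\sst}$, respectively, via the embedding~\eqref{eqn:pytembed} and rescaling~\eqref{eqn:embeddedsyt} (the latter applied to $\Tb^{\rm rsc}_{\sst}$, whose entries lie in $(0,1)$ and which satisfies the tableau constraints since $\Tb_{\sst}$ does). Fix a bounded metric $d$ on tableaux that induces the convergence topology, e.g.\ $d(T,T') = \sum_{(x,y) \in \st} 2^{-|x|-y}\,\rho(T(x,y), T'(x,y))$ with $\rho(a,b) = \bigl| \tfrac{a}{1+a} - \tfrac{b}{1+b}\bigr|$ (and $\tfrac{\infty}{1+\infty} = 1$), a metric on $[0,\infty]$ for which $t \mapsto \tfrac{t}{1+t}$ is $1$-Lipschitz. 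Since $\T_{\alpha,n} \Rightarrow \T_{\rm edge}$ by Theorem~\ref{thm:tableaulocallimit}, it suffices by the converging-together lemma to show $d(\T_{\alpha,n}, \widehat\T_{\alpha,n}) \to 0$ in probability, and for that, by dominated convergence, it suffices to show that for each fixed $(x,y) \in \st$,
\[
\rho\bigl(\T_{\alpha,n}(x,y),\, \widehat\T_{\alpha,n}(x,y)\bigr) \longrightarrow 0 \quad \text{in probability.}
\]

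To prove this pointwise estimate, fix $(x,y) \in \st$. For all large $n$ the cell $c$ of $\sst$ that $(x,y)$ indexes under the shifted embedding lies inside $\sst$ (this uses $\alpha \in (-1,1)$ and the geometry of~\eqref{eqn:pytembed}--\eqref{eqn:embeddedsyt}; if it did not, both entries would be $\infty$ and $\rho=0$). Writing $m = \Tb_{\sst}(c)$ and $k = N - m$, one has $\T_{\alpha,n}(x,y) = \sqrt{1-\alpha^2}\, n(1 - U_{(m)})$ and $\widehat\T_{\alpha,n}(x,y) = \sqrt{1-\alpha^2}\, n\bigl(1 - \tfrac{m}{N+1}\bigr) = \sqrt{1-\alpha^2}\, n\,\tfrac{k+1}{N+1}$. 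I split on whether $k$ is at the edge scale. Using the standard fact that the $N+1$ spacings of uniform order statistics are distributed as normalized i.i.d.\ exponentials, so that $1 - U_{(N-k)}$, jointly in $k$, is distributed as $\tfrac{E_1 + \cdots + E_{k+1}}{E_1 + \cdots + E_{N+1}}$ with $E_i$ i.i.d.\ $\mathrm{Exp}(1)$, together with Doob's $L^2$ maximal inequality for the partial sum martingale and the law of large numbers, one gets for every fixed $C > 0$
\[
\sup_{0 \le k \le Cn}\, n\,\Bigl| \tfrac{k+1}{N+1} - \bigl(1 - U_{(N-k)}\bigr) \Bigr| \;\longrightarrow\; 0 \quad \text{in probability},
\]
because $N = \binom n2 \asymp n^2$ makes the left side of order $O_{\mathbb{P}}(n^{3/2}/N) = O_{\mathbb{P}}(n^{-1/2})$. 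On the event $\{k \le Cn\}$, the $1$-Lipschitz bound then controls $\rho(\T_{\alpha,n}(x,y), \widehat\T_{\alpha,n}(x,y))$ by $\sqrt{1-\alpha^2}$ times this vanishing supremum. On the complementary event $\{k > Cn\}$, the SYT-side entry satisfies $\widehat\T_{\alpha,n}(x,y) = \sqrt{1-\alpha^2}\, n\,\tfrac{k+1}{N+1} \ge c_0 C$ for $n$ large (with $c_0>0$ depending only on $\alpha$, since $n^2/(N+1) \to 2$), and, with probability tending to $1$, the PYT-side entry $\T_{\alpha,n}(x,y) = \sqrt{1-\alpha^2}\, n(1 - U_{(m)}) \ge \sqrt{1-\alpha^2}\, n\,\bigl(1 - U_{(N - \lceil Cn\rceil)}\bigr) \ge c_0 C$ as well, again by the law of large numbers ($E_1 + \cdots + E_{\lceil Cn \rceil + 1} \ge Cn/2$ and $E_1 + \cdots + E_{N+1} \le 2N$ with high probability). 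Since $\rho(a,b) \le \tfrac{1}{1+a} + \tfrac{1}{1+b}$, whenever both entries exceed $c_0 C$ we get $\rho \le \tfrac{2}{1 + c_0 C}$. Hence, given $\eps > 0$, choosing $C$ with $\tfrac{2}{1+c_0 C} < \eps$ and letting $n \to \infty$ forces $\mathbb{P}\bigl[\rho(\T_{\alpha,n}(x,y), \widehat\T_{\alpha,n}(x,y)) > \eps\bigr] \to 0$, as required; this also covers the (null) case $\T_{\rm edge}(x,y) = \infty$.

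The main obstacle is the uniform edge concentration of the uniform order statistics at scale $n$: the crude Glivenko--Cantelli/DKW bound $\sup_j |U_{(j)} - \tfrac{j}{N+1}| = O_{\mathbb{P}}(N^{-1/2})$ is exactly of the wrong order once multiplied by $n$, so one genuinely has to exploit that near the top of the tableau the relevant quantities $1 - U_{(N-k)}$ are themselves only of size $\Theta(k/N)$ with relative fluctuations $\Theta(k^{-1/2})$ --- which is why the argument is organized around the edge window $k \le Cn$ and uses the exponential-spacings representation with maximal inequalities rather than a uniform empirical-process bound. A secondary, more routine point is the consistent transfer of large (or infinite) entries between the two tableaux, which is handled by the crude lower bounds above together with the tightness coming from Theorem~\ref{thm:tableaulocallimit}.
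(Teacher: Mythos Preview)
Your proof is correct. Both your argument and the paper's start from the same coupling of $\Tb_{\sst}$ and $\T_{\sst}$ via uniform order statistics, and both reduce the task to showing that for a fixed cell $(x,y)$ the difference $n\bigl|P_{(k)} - \tfrac{k}{N+1}\bigr|$ (with $k=\Tb_{\sst}(c)$ random) vanishes in probability. The paper handles the randomness of $k$ by invoking \cite[Theorem~11]{AHRV}, which guarantees that entries in a fixed local window satisfy $N-k \le \delta_n = o(N)$ with high probability; it then plugs this into the exact Beta$(k,N{+}1{-}k)$ variance of $P_{(k)}$ and applies Chebyshev. You instead avoid any external input about the location of large entries: by working with a bounded metric on $[0,\infty]$ you only need to control the difference on the event $\{N-k \le Cn\}$, since on the complement both rescaled entries exceed $c_0 C$ and are therefore $\rho$-close. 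Your concentration on the edge window is obtained from the exponential-spacings representation of order statistics together with Doob's $L^2$ maximal inequality, yielding the uniform $O_{\mathbb P}(n^{-1/2})$ bound over $0\le N-k\le Cn$. This route is a little longer but more self-contained; the paper's is shorter because it outsources the hard step (localizing $k$) to \cite{AHRV}.
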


\begin{proof}
Consider the following coupling between $\Tb_{\sst}$ and $\T_{\sst}$. Given $\Tb_{\sst}$,
independently sample $P_{(1)} < P_{(2)} < \cdots < P_{(N)}$ according to the
order statistics of $N$ i.i.d.~random variables distributed uniformly on $[0,1]$.
Insert the entry $P_{(k)}$ into the cell of $\sst$ that contains entry $k$ of $\Tb_{\sst}$.
The resulting tableau has the law of $\T_{\sst}$. Using this coupling, and due
to the manner the scaling from \eqref{eqn:embeddedsyt} is defined, it suffices to show
the following in order to conclude that $\Tb^{\rm{rsc}}_{\sst}$ converges to $\T_{\rm{edge}}$
by way of Theorem \ref{thm:tableaulocallimit}.

Fix an $L > 0$ and consider any $(x,y) \in \st$ such that $(x,y) \in [-L,L] \times [0,L]$.
Let $P_{(k)}$ be the entry of $\T_{\sst}$ inside $(x,y)$ under the embedding from \eqref{eqn:embeddedsyt}.
Then as $n \to \infty$, we need to show that
\begin{equation} \label{eqn:deposs}
n \, \Big | P_{(k)} - \frac{k}{N+1} \Big | \;\; \longrightarrow \; 0 \;\; \text{in probability}.
\end{equation}
The number $k$ is random, its distribution depends on $\Tb$ as well as $n$ and $\alpha$.

In order to establish \eqref{eqn:deposs} we will use the following auxiliary fact, which is
a byproduct of \cite[Theorem 11]{AHRV}. There is a number $\delta_n$ of order $o(N)$
as $n \to \infty$, such that with probability tending to 1 as $n \to \infty$, every entry of
$\Tb_{\sst}$ within the cells of $\st \cap [-L,L] \times [0,L]$ under the embedding \eqref{eqn:embeddedsyt}
has value at least $N - \delta_n$. As a consequence, $k \geq N - \delta_n$ with probability tending to 1.
We write
\begin{equation} \label{eqn:deposs2}
\pr{n \, \left| P_{(k)} - \frac{k}{N+1} \right| > \eps} \leq \, \frac{n^2}{\eps^2}\,
\E{ \left| P_{(k)}- \frac{k}{N+1} \right|^2 \, \Big |\, k \geq N- \delta_n} + \pr{k < N-\delta_n}.
\end{equation}

For a fixed deterministic $j$,  $P_{(j)}$ has a Beta distribution with parameters $j$ and $N+1-j$, which
has mean $j / (N+1)$ and variance
$$ \E{ \left| P_{(j)} - \frac{j}{N+1} \right|^2 } = \frac{j (N+1-j)}{(N+1)^2(N+2)}.$$
Since the $P_{(j)}$s are independent of $\Tb$, employing the bound above for $j \geq N- \delta_n$ and
summing over the probabilities of $k$ give
$$ \E{ \left| P_{(k)}- \frac{k}{N+1} \right|^2 \; \Big |\; k \geq N- \delta_n} \leq \frac{\delta_n}{N^2}.$$
The latter quantity is of order $o(1)/N$ as $n \to \infty$. Since $N={ n \choose 2}$, we conclude that both terms on
the right hand side of \eqref{eqn:deposs2} tend to 0 as $n \to \infty$.
\end{proof}

\paragraph{\textbf{Statistical properties of the local staircase shaped tableau}}
The set $\st$ can be made into a directed graph by putting directed
edges from each vertex $(x,y) \in \st$ to the vertices $(x-1,y+1)$ and $(x+1,y+1)$.
The automorphisms of this graph consists of translations $\phi_h$,
for $h \in \Z$, given by $\phi_h(x,y) = (x+2h,y)$, as well as a reflection $\phi_{-}$
given by $\phi_{-}(x,y) = (-x,y)$. Tableaux are preserved by these automorphisms.

A random tableau $\mathbf{T}$ is \emph{translation invariant} if
$\mathbf{T} \circ \phi_h$ has the same law as $\mathbf{T}$ for every translation $\phi_h$.
The random tableau is reflection invariant if $\mathbf{T} \circ \phi_{-}$ has the same law as $\mathbf{T}$.
The translation invariant sigma-algebra of $\mathbf{T}$ is the sigma-algebra of events that
remain invariant under every translation:
$$ \F_{\rm{inv}} = \left \{ \text{Events}\; E \;\text{associated to}\; \mathbf{T}\; \text{s.t.}\; \phi_h E = E\; \text{for every}\; h \in \Z. \right \}.$$
(Recall that $\phi_h E = \{\omega \circ \phi_h: \omega \in E\}$.)
We say $\mathbf{T}$ is ergodic under translations if $\F_{\rm{inv}}$ is the trivial sigma-algebra.

\begin{prop} \label{thm:LSTstatistics}
The local tableau $\T_{\rm{edge}}$ has the following statistical properties.
\begin{enumerate}
\item Almost surely, $\T_{\rm{edge}}(x,y)$ is finite for every $(x,y) \in \st$
and the entries of $\T_{\rm{edge}}$ are all distinct.

\item The law of $\T_{\rm{edge}}$ is both translation and reflection invariant.

\item $\T_{\rm{edge}}$ is ergodic under translations.

\item Almost surely, for every $t > 0$ there are infinitely many positive and negative
$x \in \Z$ such that $T_{\rm{edge}}(2x,0) > t$.
\end{enumerate}
\end{prop}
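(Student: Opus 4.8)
The plan is to derive each of the four properties of $\T_{\rm{edge}}$ from the corresponding property of $\X_{\rm{edge}}$ via the homeomorphism $M_{X\to T}$ of Lemma \ref{lem:homeo} and Theorem \ref{thm:tableaulocallimit}, which states that $\T_{\rm{edge}}$ has the law of $M_{X\to T}(\X_{\rm{edge}})$. First, for property (1), I would argue that $\T_{\rm{edge}}(x,y)<\infty$ almost surely for every $(x,y)\in\st$. By the relation \eqref{eqn:kthjump}, $\T_{\rm{edge}}(x,y)$ is the $k$-th smallest point of $\X_{\rm{edge}}$ on $\{x\}\times\R_{\geq 0}$ for $k=(y+1+\ind{x\,\text{even}})/2$, so finiteness is exactly the statement that $\X_{\rm{edge}}$ has at least $k$ points on each line $\{x\}\times\R_{\geq 0}$; Lemma \ref{lem:intensity} gives that there are infinitely many points on every such line almost surely, so after a union bound over $x\in\Z$ we conclude finiteness everywhere. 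Distinctness of the entries should follow from the fact that $\X_{\rm{edge}}$ is a simple point process (being determinantal with a continuous kernel), so distinct cells $(x,y)$ on the same line correspond to distinct ordered jumps, and distinct cells on different lines have entries that are jumps on different lines $\{x\}\times\R_{\geq 0}$ — but here I should be slightly careful: two jumps on different lines could a priori have the same $u$-coordinate. To rule this out I would note that the diagonal of the correlation measure for pairs $(x_1,u;x_2,u)$ with $x_1\neq x_2$ is a null set for the two-dimensional correlation measure (the reference measure $\#_\Z\otimes\Lb$ gives the set $\{u_1=u_2\}$ measure zero in $\R^2$), hence almost surely no two points of $\X_{\rm{edge}}$ share a $u$-coordinate.

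For property (2), the translation $\phi_h(x,y)=(x+2h,y)$ on $\st$ corresponds precisely, under $M_{T\to X}$, to the spatial translation $(x,u)\mapsto(x+2h,u)$ on $\Z\times\R_{\geq 0}$ — one checks directly from \eqref{eqn:kthjump} that the $k$-th jump on line $\{x\}$ maps to the $k$-th jump on line $\{x+2h\}$. Since Proposition \ref{prop:Kedge}(I) asserts that $\X_{\rm{edge}}+(2h,0)$ has the same law as $\X_{\rm{edge}}$, applying the measurable map $M_{X\to T}$ gives that $\T_{\rm{edge}}\circ\phi_h$ has the same law as $\T_{\rm{edge}}$. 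The reflection $\phi_{-}(x,y)=(-x,y)$ similarly corresponds to $(x,u)\mapsto(-x,u)$ on point configurations — again using \eqref{eqn:kthjump}, noting that $\ind{x\,\text{even}}=\ind{-x\,\text{even}}$ so the indexing of jumps is unchanged under $x\mapsto-x$ — and Proposition \ref{prop:Kedge}(I) also gives reflection invariance of $\X_{\rm{edge}}$, so the same argument applies.

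For property (3), translation-ergodicity of $\T_{\rm{edge}}$, I would transfer ergodicity of $\X_{\rm{edge}}$ from Proposition \ref{prop:tail}. The key point is that $M_{T\to X}$ is a homeomorphism (Lemma \ref{lem:homeo}) that intertwines the $\Z$-actions $\phi_h$ on tableaux and $\tau^h$ on point configurations; hence a $\phi$-invariant event $E$ for $\T_{\rm{edge}}$ pulls back to a $\tau$-invariant event $M_{T\to X}(E)$ for $\X_{\rm{edge}}$ (using that $M_{T\to X}$ is a bijection so images of invariant sets are invariant), which has probability $0$ or $1$ by Proposition \ref{prop:tail}, and since $M_{X\to T}$ pushes the law of $\X_{\rm{edge}}$ to that of $\T_{\rm{edge}}$, we get $\pr{E}\in\{0,1\}$. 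Finally, property (4) follows directly from Lemma \ref{lem:EGhelper}: that lemma says that almost surely, for every $t>0$ there is a doubly infinite sequence of integers $x_i$ such that $\X_{\rm{edge}}$ has no points on $\{2x_i\}\times[0,t]$; by \eqref{eqn:kthjump} with $k=1$ and $x=2x_i$ even, the smallest jump on $\{2x_i\}\times\R_{\geq 0}$ equals $\T_{\rm{edge}}(2x_i,0)$, so no points on $[0,t]$ means exactly $\T_{\rm{edge}}(2x_i,0)>t$, giving infinitely many positive and negative such $x_i$. The only genuinely delicate point is the ``no shared $u$-coordinate'' step in property (1); everything else is a mechanical transfer along the homeomorphism, and I would expect that to be the place where one must be most careful about which null sets are being discarded.
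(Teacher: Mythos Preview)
Your proposal is correct and follows essentially the same approach as the paper: each of the four properties is transferred from the corresponding property of $\X_{\rm{edge}}$ (Lemma~\ref{lem:intensity}, Proposition~\ref{prop:Kedge}, Proposition~\ref{prop:tail}, Lemma~\ref{lem:EGhelper}) through the map $M_{X\to T}$, with the distinctness in (1) handled by the same measure-zero argument for pairs sharing a $u$-coordinate. Your write-up is in fact slightly more explicit than the paper's about how the intertwining of the $\Z$-actions works and about separating the same-line versus different-line cases for distinctness, but the substance is identical.
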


\begin{proof}
Almost surely, $\X_{\rm{edge}}$ has an infinite and unbounded collection of points
on every line $\{x\} \times \R_{\geq 0}$ by Lemma \ref{lem:intensity}. Also,
almost surely, $\X_{\rm{edge}}$ does not contain two points of the form
$(x,u)$ and $(y,u)$ with $x \neq y$. To see this, observe from the relation (\ref{eqn:dpp})
for determinantal point processes that the expected number of such pairs of points in $\X_{\rm{edge}}$
is 0 due to the set of such pairs having measure zero with respect to the measure
$\big(\# \Z \otimes \Lb(\R_{\geq 0})\big)^{\otimes 2}$. When both these properties hold,
$\T_{\rm{edge}}$ satisfies (1).

The law of $\T_{\rm{edge}}$ is invariant under translations because for
every translation $\phi_h$, the tableau $\T_{\rm{edge}} \circ \phi_h$ is constructed
from the jump process $\X_{\rm{edge}} + (2h,0)$, which has the same law of $\X_{\rm{edge}}$ by
Proposition \ref{prop:Kedge}. Similarly, reflection invariance of $\T_{\rm{edge}}$ follows from reflection
invariance of $\X_{\rm{edge}}$. This establishes (2).

The ergodicty of $\T_{\rm{edge}}$ under translations follows from the ergodicity of $\X_{\rm{edge}}$
under translations (Proposition \ref{prop:tail}). This is because a translation invariant event for
$\T_{\rm{edge}}$ is the image of a translation invariant event for $\X_{\rm edge}$ under the map $M_{X \to T}$.
Finally, (4) is the statement of Lemma \ref{lem:EGhelper}.
\end{proof}

\section{Random sorting networks} \label{sec:sorting}

\subsection{Sorting networks, Young tableaux and Edelman-Greene bijection} \label{sec:EG}

Stanley \cite{Stanley} enumerated the number of sorting networks of $\mathfrak{S}_n$, which equals
\begin{equation*} \label{eqn:stanley}
 \frac{\binom{n}{2}!}{\prod_{j=1}^{n-1} (2n-1-2j)^j}.
\end{equation*}
Following Stanley, Edelman and Greene \cite{EG} provided an explicit bijection between sorting
networks and staircase shaped SYT. An account of further combinatorial developments may be found in \cite{Gar, HY}.
We describe the part of the Edelman-Greene bijection that maps staircase shaped tableaux to sorting networks.
The inverse map is a modification of the RSK algorithm; we do not describe it here since it is not used in the paper.
See \cite{EG} or \cite[Section 4]{AHRV} for a full description of the bijection.

Recall that a sorting network of $\mathfrak{S}_n$ is identified by its sequence of adjacent swaps
$(s_1,\ldots,s_N)$, where $N = \binom{n}{2}$. For the rest of the paper we will use $N$ to denote $\binom{n}{2}$.
For $T \in \rm{SYT}(\sst)$, we adopt the convention that $T(i,j) = - \infty$ if $(i,j) \notin \sst$.
\smallskip

\paragraph{\textbf{The Sch\"{u}tzenberger operator}}
Let $(i_{\max}(T),j_{\max}(T))$ denote the cell containing the maximum entry of a  SYT $T$.
The Sch\"{u}tzenberger operator $\Phi : \mathrm{SYT}(\sst) \to \mathrm{SYT}(\sst)$ is a bijection defined
as follows. Given $T \in \rm{SYT}(\sst)$, construct the \emph{sliding path} of cells
$c_0, c_1,\ldots, c_{d-1} \in \sst$ iteratively in the following manner.
Set $c_0 = (i_{\max}(T), j_{\max}(T))$ and $c_d = (1,1)$. Then set
$$c_{r+1} = \mathrm{argmax} \, \big \{ T(c_r - (1,0)), \, T(c_r - (0,1)) \big \}.$$
Let $\Phi(T) = [\hat{T}(i,j)]$ where $\hat{T}(c_r) = T(c_{r+1}) + 1$ for $0 \leq r \leq d-1$,
$\hat{T}(c_d) = 1$, and $\hat{T}(i,j) = T(i,j)+1$ for all other cells $(i,j) \in \sst \setminus \{c_0,\ldots, c_d\}$.
Figure \ref{fig:schutzenberger} provides an illustration.

\begin{figure}[htpb]
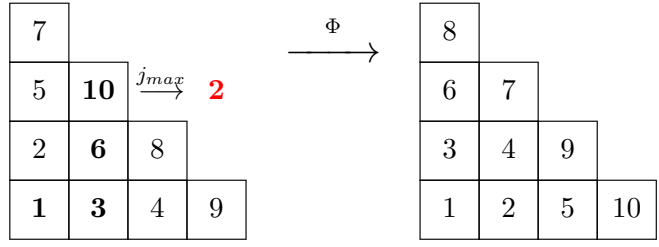

\begin{center}
\ytableausetup{boxsize=2em}
\begin{ytableau}
7 \\
5 & \mathbf{10} & \none[\;\overset{j_{max}}{\longrightarrow}] & \none[\textcolor{red}{\mathbf{2}}] \\
2 & \mathbf{6} & 8 \\
\mathbf{1} & \mathbf{3} & 4 & 9
\end{ytableau}
\quad $\Large \overset{\Phi}{\xrightarrow{\hspace*{1cm}}}$ \quad
\begin{ytableau}
8 \\
6 & 7 \\
3 & 4 & 9 \\
1 & 2 & 5 & 10
\end{ytableau}
\caption{\small{First step of the Edelman-Greene algorithm on a staircase shaped SYT of size 10.
Sliding path is bolded. The associated sorting network of $\mathfrak{S}_5$ has swap
sequence (2,4,3,1,2,1,4,3,2,4).}}
\label{fig:schutzenberger}
\end{center}
\end{figure}

The \emph{Edelman-Greene map} $\rm{EG} : \mathrm{SYT}(\sst) \mapsto \{\text{sorting networks of}\; \mathfrak{S}_n\}$
is defined by
\begin{equation} \label{eqn:EGmap}
\mathrm{EG}(T) = \left ( j_{\max}\big(\Phi^k(T)\big)\right)_{0 \leq k \leq N-1}\,,
\end{equation}
where $\Phi^k$ is the $k$-th iterate of $\Phi$. Edelman and Greene \cite[Theorem 5.4]{EG} proved
that $\mathrm{EG}$ indeed maps to sorting networks and that is has an inverse.

\subsection{First swap times of random sorting networks: proof of Corollary \ref{cor:firstswap}} \label{sec:firstswap}

Let $T_{\rm{FS}}(s)$ be the first time the adjacent swap $(s,s+1)$ appears in a sorting network $\omega$
of $\mathfrak{S}_n$. According to the Edelman-Greene bijection, this time is recorded in
the entry $(n-s,s)$ of $\mathrm{EG}(\omega)$. Thus,
$$T_{\rm{FS}}(s) = N+1 - \mathrm{EG}(\omega)(n-s,s).$$
In terms of the rescaled tableau $\Tb_{\sst}^{\rm{rsc}}$ from \eqref{eqn:rsctab} we have that
$$\Tb_{\mathrm{FS}}(s) \; \overset{law}{=} \; (N+1) \cdot (1 - \Tb_{\sst}^{\rm{rsc}}(n-s,s)).$$
This implies the following for $\Tb_{\mathrm{FS}, \alpha,n}$ -- the first time an adjacent swap between
$\lfloor \frac{n(1+\alpha)}{2} \rfloor$ and  $\lfloor \frac{n(1+\alpha)}{2} \rfloor+1$ appears in a
random sorting network of $\mathfrak{S}_n$:
$$\frac{2 \sqrt{1-\alpha^2}}{n}\, \Tb_{\mathrm{FS}, \alpha,n} \; \overset{law}{=} \;
\frac{2N+2}{n^2} \,\Tb_{\alpha, \sst}^{\rm{rsc}}(0,0).$$
Here, $\Tb_{\alpha, \sst}^{\rm{rsc}}$ is the tableau $\Tb_{\sst}^{\rm{rsc}}$ rescaled and embedded into $\st$
according to (\ref{eqn:embeddedsyt}). Theorem \ref{corr:LST} implies that $\Tb_{\alpha, \sst}^{\rm{rsc}}(0,0)$
converges weakly to $\T_{\rm{edge}}(0,0)$. Since $(2N+2)/n^2 \to 1$, we conclude that the rescaled
$\Tb_{\mathrm{FS}, \alpha,n}$ from above converges weakly to $\T_{\rm{edge}}(0,0)$.
Thus, $\Tb_{\rm{FS}}$ has the law of $\T_{\rm{edge}}(0,0)$.

Now we explain how to get the distribution function of $\Tb_{\rm{FS}}$ given in
(\ref{eqn:firstswaplaw}). Observe that the event $\{ \T_{\rm{edge}}(0,0) > t\}$ is
the event $\{ \X_{\rm{edge}} \cap (\{0\} \times [0,t]) = \emptyset \}$.
The probability of the latter (often known as ``gap probability") has the representation given by
(\ref{eqn:firstswaplaw}), which is the Fredholm determinant of $K_{\rm{edge}}$ over $L^2(\{0\} \times [0,t])$.
This is a well-known property of determinantal point processes under the condition that the kernel
be of trace class \cite{DV}. The kernel $K_{\rm{edge}}$ is of trace class on $L^2(\{0\} \times [0,t])$ simply because
$|K_{\rm{edge}}(0,u_1;0,u_2)| \leq 2/\pi$.

The asymptotic behaviour of the distribution function of $\Tb_{\rm{FS}}$ is well-known:
\begin{equation} \label{eqn:Dyson}
 \log \pr{\Tb_{\rm{FS}} > t} = - \frac{1}{4}t^2 - \frac{1}{2}t - \frac{1}{8} \log t
  +\frac{7}{24} \log 2+ \frac{3}{2}\zeta'(-1) + o(1)\;\;\text{as}\; t \to \infty.
 \end{equation}
The formula \eqref{eqn:Dyson} has a history. In theoretical physics literature,
the leading term in \eqref{eqn:Dyson} was first studied in \cite{dCM}, while the
full expansion was given in \cite{Dyson}. The complete mathematical treatment was
developed in \cite{DIZ,K,E1,DIKZ,E2}; the present form of \eqref{eqn:Dyson} is given
in the last reference.

We will only need the simple corollary of \eqref{eqn:Dyson} that $\pr{\Tb_{\rm{FS}}> t} > 0$ for every $t$.

\subsection{Edelman-Greene algorithm on the local tableau} \label{sec:localEG}

The procedure described here is the same as the one given in the Introduction except
that it is in the language of tableaux instead of their jumps. In order to define the
Edelman-Greene algorithm on the local tableau we first introduce some concepts
that allow us to define Edelman-Greene algorithm on tableaux defined on $\st$.

A \emph{directed path} from $(x,y) \in \st$ to $(x',y') \in \st$ is a sequence of cells
$c_0 = (x,y), c_1, \ldots, c_k = (x',y')$ of $\st$ such that $c_{i+1}-c_i \in \{ (-1,1), (1,1)\}$ for every $i$.
The cells of $\st$ can be partially ordered as follows: $(x,y) \leq (x',y')$ if there is a directed path
from $(x,y)$ to $(x',y')$. Recall that $\st$ is a directed graph with edges from $(x,y)$ to $(x\pm 1, y+1)$.
It can also be thought of as an undirected graph by forgetting the direction of the edges.
A \emph{connected} subset of $\st$ is a connected subgraph of $\st$ in the undirected sense.

A \emph{Young diagram} (YD) of $\st$ is a connected subset $\lm$ that is downward closed in the partial
order, that is, if $(x,y) \in \lm$ and $(x',y') \leq (x,y)$ then $(x',y') \in \lm$. For example, $\sst$ is a YD
of $\st$. The \emph{boundary} of $\lm$, $\partial \lm$, consists of cells $(x,y) \notin \lm$ such that there
is a directed edge from some cell $(x',y') \in \lm$ to $(x,y)$. The \emph{peaks} of $\lm$ consists
of the maximal cells of $\lm$ in the partial order.

Let $T: \st \to [0,\infty]$ be a tableau as in (\ref{eqn:tabconstraint}). A \emph{sub-tableau}
is the restriction of $T$ to a YD $\lm$; we say $\lm$ is the \emph{support} of the sub-tableau.
Let $T^{\rm{finite}} = \{ T(x,y): T(x,y) \neq \infty\}$. We take the support of $T$ to be the support
of $T^{\rm{finite}}$. Observe that $T^{\rm{finite}}$ is a countable disjoint union of sub-tableaux of $T$,
say $T_1, T_2, \ldots$. Indeed, the support of the $T_i$s are the connected components of the
subgraph spanned by cells $(x,y)$ such that $T(x,y) \neq \infty$. We will call the $T_i$s the
\emph{clusters} of $T$. The tableau $T$ is \emph{EG-admissible} if all the entries of $T^{\rm{finite}}$
are distinct and every cluster $T_i$ is supported on a YD of \textbf{finite size}.
\smallskip

\paragraph{\textbf{Edelman-Greene algorithm on a finite tableau}}

Let $\lm$ be a YD of $\st$ of finite size and $T: \lm \to \R_{\geq 0}$ a tableau
such that all its entries are distinct. The Edelman-Greene map $\rm{EG}$ takes
as input $T$ and outputs a triple $(x,t,\hat{T})$, where $x \in \Z$, $t \in \R_{\geq 0}$
and $\hat{T}$ is a sub-tableau.

The sliding path of $T$ is a directed path $c_0, c_1, \ldots, c_k$ defined by
\begin{enumerate}
\item $c_0 = \mathrm{argmin} \, \{ \, T(x,y): (x,y) \in \lm \}$.
\item $c_{i+1} = \mathrm{argmin} \, \{ \, T(c_i + (-1,1)), T(c_i + (1,1)) \,\}$.
\item $c_k = $ peak of $\lm$ obtained when both $c_k + (\pm 1,1)$ belong to $\partial \lm$.
\end{enumerate}
Let $\hat{\lm} = \lm \setminus \{c_k\}$ and define $\hat{T} : \hat{\lm} \to \R_{\geq 0}$ by
\begin{equation*}
\hat{T}(x,y) = \begin{cases}
T(x,y), & \text{if}\;\; (x,y) \in \hat{\lm} \setminus \{c_0, \ldots, c_{k-1}\}; \\
T(c_{i+1}), & \text{if} \;\; (x,y) = c_i\;\; \text{for some}\;\; 0 \leq i \leq k-1.
\end{cases}
\end{equation*}
The cell $c_0$ must be on the bottom level of $\st$ and has the form $(2x,0)$ for some $x \in \Z$.
Set $t = T(c_0)$. The output is $\mathrm{EG}(T) = (x,t,\hat{T})$, and empty if $T$ is the empty tableau.

The Edelman-Greene algorithm on $T$ outputs a discrete subset $S(T) \subset \Z \times \R_{\geq 0}$,
denoted the \emph{swaps} of $T$. Let $(x_j, t_j, \hat{T}_j)$, for $1 \leq j \leq |\lm|$, be defined
iteratively by $(x_1,t_1,\hat{T}_1) = \mathrm{EG}(T)$ and $(x_j,t_j,\hat{T}_j) = \mathrm{EG}(\hat{T}_{j-1})$
for $2 \leq j \leq |\lm|$. Then,
\begin{equation} \label{eqn:swaps}
S(T) = \{ (x_j,t_j): 1 \leq j \leq |\lm| \}.
\end{equation}
If the cell $(x,y) \in \lm$ contains the $k$-th smallest entry of $T$ then its entry is removed
during the $k$-th iteration of the algorithm. We will say that the entry at $(x,y)$ \emph{exits}
at time $t_k$ from row $x_k$. We will also say that $(x_k, t_k)$ \emph{originates} from cell
$(x,y)$.
\smallskip

\paragraph{\textbf{Edelman-Greene algorithm on an admissible tableau}}
Let $T_1, T_2, \ldots$ be the clusters of an EG-admissible tableau $T$.
Observe that for $i \neq j$, the swaps of $T_i$ and $T_j$ exit from mutually
disjoint rows. Thus, the swap sets $S(T_1), S(T_2), \ldots$ are row-wise mutually
disjoint. The swaps of $T$ are defined as
$$S(T) = \bigcup_{i} \, S(T_i).$$

The local tableau $\T_{\rm{edge}}$ is not EG-admissible. In order to define swaps
for the local tableau we cut off large entries so that it becomes EG-admissible,
and then process the tableau in a graded manner. For this to be successful,
the EG algorithm ought to be consistent in the sense that running it on a tableau, and then
restricting to swaps that originate from a sub-tableau, must produce the same
outcome as the algorithm applied to the sub-tableau. This is not always the case
and the following explains when it may be so.

Given two tableaux $T_{\rm{small}}$ and $T_{\rm{big}}$, we say $T_{\rm{small}} \leq T_{\rm{big}}$
if the following criteria hold.
\begin{enumerate}
\item $T_{\rm{small}}(x,y) = T_{\rm{big}}(x,y)$ for every $(x,y) \in \mathrm{support}(T_{\rm{small}})$.
\item For every $(x,y) \in \mathrm{support}(T_{\rm{small}})$, and $(x',y') \in
\mathrm{support}(T_{\rm{big}}) \setminus \mathrm{support}(T_{\rm{small}})$, if $(x,y)$
belongs to the same cluster of $T_{\rm{big}}$ as $(x',y')$ then $T_{\rm{small}}(x,y) < T_{\rm{big}}(x',y')$.
\end{enumerate}

\begin{lemma} \label{lem:EGconsistent}
Let $T_{\rm{small}} \leq T_{\rm{big}}$, and suppose that $T_{\rm{big}}$ is EG-admissible.
Then, applying the EG algorithm to $T_{\rm{big}}$ and restricting to the swaps that originate
from the cells of $T_{\rm{small}}$ produces the same outcome as applying the EG algorithm
to $T_{\rm{small}}$. In particular, $S(T_{\rm{small}}) \subset S(T_{\rm{big}})$.
\end{lemma}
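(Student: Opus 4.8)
The plan is to prove the sharper statement that, writing $S(T_{\mathrm{big}})\restriction T_{\mathrm{small}}$ for the set of swaps of $S(T_{\mathrm{big}})$ originating from cells of $T_{\mathrm{small}}$, one has $S(T_{\mathrm{big}})\restriction T_{\mathrm{small}} = S(T_{\mathrm{small}})$; the claimed inclusion then follows immediately. The first observation is that, since the $j$-th iteration of the EG algorithm removes the $j$-th smallest entry and outputs a swap whose time-coordinate is that entry, ``a swap of $S(T_{\mathrm{big}})$ originates from a cell of $T_{\mathrm{small}}$'' is equivalent to ``its time-coordinate is an entry of $T_{\mathrm{small}}$''. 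The proof will be an induction on $|\mathrm{support}(T_{\mathrm{small}})|$, and almost all the content is a single-step statement about one cluster.

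First I would record reductions. Because $T_{\mathrm{small}}$ agrees with $T_{\mathrm{big}}$ on its support, $\mathrm{support}(T_{\mathrm{small}}) \subseteq \mathrm{support}(T_{\mathrm{big}})$ and each cluster of $T_{\mathrm{small}}$ is contained in a single cluster of $T_{\mathrm{big}}$; hence $T_{\mathrm{small}}$ is itself EG-admissible (distinct entries being inherited), so $S(T_{\mathrm{small}})$ and $S(T_{\mathrm{big}})$ both make sense. Since EG processes clusters independently and distinct clusters of $T_{\mathrm{big}}$ emit swaps in disjoint rows, it suffices to fix one cluster $C$ of $T_{\mathrm{big}}$ — a tableau on a finite connected Young diagram $\lambda$ of $\st$ with distinct entries — and let $A$ be the restriction of $T_{\mathrm{small}}$ to the cells of $C$, supported on a downward-closed subset $\mu \subseteq \lambda$; one checks directly that $A \le C$. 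Condition (2) of the order $A \le C$ forces every entry of $C$ on $\mu$ to be strictly smaller than every entry of $C$ on $\lambda \setminus \mu$, so $\mu$ is exactly the set of cells carrying the $|\mu|$ smallest entries of $C$; in particular, when $\mu \neq \emptyset$ the global minimum cell $c_0 = \mathrm{argmin}_\lambda C$ lies in $\mu$, and it sits on the bottom row of $\st$ (the minimum of any tableau does, by downward-closedness and strictness of the column inequality).

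The key step is a sliding-path coincidence lemma. Let $\mu \neq \emptyset$, and let $\mu^{(0)}$ be the cluster of $\mu$ containing $c_0$. I would show that the sliding path $c_0, c_1, \dots, c_k$ of $C$ (terminating at a peak of $\lambda$) has an initial segment $c_0, \dots, c_{k'}$, $k' \le k$, which is precisely the sliding path of the finite tableau $\mu^{(0)}$ (terminating at a peak of $\mu^{(0)}$), with $c_0,\dots,c_{k'} \in \mu$ and $c_{k'+1},\dots,c_k \in \lambda\setminus\mu$. This is a step-by-step check: at a cell $c_i \in \mu^{(0)}$ that is not yet a peak of $\mu^{(0)}$, at least one up-neighbour $c_i\pm(1,1)$ lies in $\mu$; because all $\mu$-entries of $C$ lie below all $(\lambda\setminus\mu)$-entries, the $\mathrm{argmin}$ rule applied in $C$ never chooses a cell of $\lambda\setminus\mu$ over one of $\mu$, so it makes the same choice as the $\mathrm{argmin}$ rule applied within $\mu^{(0)}$, whence $c_{i+1}\in\mu^{(0)}$; the path can leave $\mu^{(0)}$ only from a cell that is a peak of $\mu^{(0)}$, and once it lies in $\lambda\setminus\mu$ it stays there because $\mu$ is downward closed. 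Consequently the first EG iteration on $C$ and the first EG iteration on the cluster $\mu^{(0)}$ output the same swap $\sigma_1=(x_1,t_1)$, with $c_0=(2x_1,0)$ and $t_1=C(c_0)$.

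Finally I would run the induction. For $|\mu|=0$ both sides are empty. For $|\mu|\ge 1$: perform one EG step on $C$, obtaining $\hat C$ on $\lambda\setminus\{c_k\}$, and one EG step on the cluster $\mu^{(0)}$ of $A$, obtaining $\widehat A$ (equal to $A$ off $\mu^{(0)}$, and to the slid tableau on $\mu^{(0)}\setminus\{c_{k'}\}$, with $c_{k'}$ set to $\infty$). By the coincidence lemma, $\widehat A$ is supported on $\mu\setminus\{c_{k'}\}$ and all its entries are among the $|\mu|$ smallest entries of $C$, while $\hat C$ agrees with $\widehat A$ there and all its remaining entries are strictly larger; this gives $\widehat A \le \hat C$. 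Moreover $\hat C$ is again EG-admissible: finite support of size $|\lambda|-1$; distinct entries, namely those of $C$ with $t_1$ deleted; and support $\lambda\setminus\{c_k\}$ downward closed since $c_k$ is a peak, so its connected components are Young diagrams of $\st$ (one must here use the admissible-tableau form of EG, since $\lambda\setminus\{c_k\}$ may be disconnected). Also $\mathrm{entries}(\widehat A)=\mathrm{entries}(A)\setminus\{t_1\}$, and from the definitions of EG we have $S(C)=\{\sigma_1\}\sqcup S(\hat C)$ and $S(A)=\{\sigma_1\}\sqcup S(\widehat A)$. Applying the inductive hypothesis to $\widehat A \le \hat C$ shows that the swaps of $S(\hat C)$ whose time is an entry of $\widehat A$ are exactly $S(\widehat A)$; since $t_1$ is the only entry of $A$ that is not an entry of $\widehat A$, and every swap of $S(\hat C)$ has time an entry of $\hat C$ (hence $\neq t_1$), the swaps of $S(C)$ with time an entry of $A$ are $\{\sigma_1\}\cup S(\widehat A)=S(A)$. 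Unioning over the clusters of $T_{\mathrm{big}}$ completes the proof. The main obstacle is the sliding-path coincidence lemma — in particular the assertion that the $C$-slide, restricted to the support of $A$, matches the $\mu^{(0)}$-slide and terminates precisely at the first peak of $\mu^{(0)}$ it meets; the rest (cluster decomposition, admissibility bookkeeping, tracking the entry multiset) is routine, with only the mild care of working with the admissible-tableau version of EG throughout.
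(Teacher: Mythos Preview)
Your proof is correct and follows essentially the same approach as the paper's: reduce to a single cluster $C$ of $T_{\rm big}$, use that the entries on $\mu=\mathrm{support}(A)$ are smaller than all entries on $\lambda\setminus\mu$, and conclude that the EG algorithm on $C$ processes the cells of $\mu$ first exactly as EG on $A$ would. The paper's proof is a brief sketch of this idea---it summarizes your sliding-path coincidence lemma and induction in the single phrase ``the algorithm treats that entry as if it were $\infty$''---whereas you have spelled out the mechanics carefully, including the termination of the $C$-slide within $\mu$ at a peak of $\mu^{(0)}$ and the bookkeeping needed to maintain $\widehat A\le\hat C$ through the induction.
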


\begin{proof}
Observe that the clusters of $T_{\rm{small}}$ are contained within the clusters of $T_{\rm{big}}$.
The EG algorithm acts independently on each cluster of $T_{\rm{big}}$ is a row-wise
disjoint manner. Fix a particular cluster $T$ of $T_{\rm{big}}$, and suppose
that the clusters of $T_{\rm{small}}$ that are contained inside $T$ are $T_1, \ldots, T_k$.
It suffices to prove that the EG algorithm applied to $T$, and then restricted to
the swaps that originate from $T_1, \ldots, T_k$, produces the same outcome as
the algorithm applied to each individual $T_i$.

Let $\lm = \mathrm{support}(T)$ and $\lm_i = \mathrm{support}(T_i)$. The assumption is that
each entry of $\lm \setminus (\cup_i \lm_i)$ is larger than every entry of $\cup_i \lm_i$.
Therefore, the EG algorithm applied to $T$ will process every entry of $\cup_i \lm_i$
before it ever processes an entry from the complement. When some entry
from $\lm \setminus (\cup_i \lm_i)$ enters a cell of some $\lm_i$ during
the first $\sum_i |\lm_i|$ steps, the algorithm treats that entry as if it were $\infty$.
Since $T_{\rm{small}}$ agrees with $T_{\rm{big}}$ on $\cup_i \lm_i$, the
EG algorithm will output the swaps of $T_1, \ldots, T_k$ during the
first $\sum_i |\lm_i|$ steps, and then output the remaining swaps of
$T \setminus (\cup_i T_i)$. This is what was claimed.
\end{proof}

A tableau $T$ is \emph{graded EG-admissible} if all of its finite-valued entries are distinct
and, if for every $t > 0$, the sub-tableau
$$T^{\leq t} = \{ T(x,y): T(x,y) \leq t\} \;\; \text{is EG-admissible}.$$
Observe that $T^{\leq t_1} \leq T^{\leq t_2}$ whenever $t_1 \leq t_2$.
Lemma \ref{lem:EGconsistent} thus implies that $S(T^{\leq t_1}) \subset S(T^{\leq t_2})$.
Therefore, for a graded EG-admissible tableau $T$, we may define
\begin{equation} \label{eqn:fullswaps}
S(T) = \bigcup_{t \geq 0} \, S(T^{\leq t}).
\end{equation}

\begin{lemma} \label{lem:EGbounded}
Suppose a sequence of tableaux $T_n \to T_{\infty}$, and also that every $T_n$ and $T_{\infty}$
are graded EG-admissible. Then for every integer $x$ and $t \geq 0$, there is a finite
YD $\lm$ that contains the cluster of $(2x,0)$ in $T_n^{\leq t}$ for every $n$.
\end{lemma}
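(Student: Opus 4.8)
The plan is to trap the clusters of $T_n^{\leq t}$, for all large $n$, inside a single finite Young diagram built from the limit tableau, and to dispose of the finitely many remaining small $n$ directly via graded EG-admissibility. Fix the integer $x$ and $t \geq 0$ throughout.

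First I would dispense with a degenerate case. If $T_\infty(2x,0) > t+1$, then the convergence $T_n(2x,0) \to T_\infty(2x,0)$ in $[0,\infty]$ forces $T_n(2x,0) > t$ for all large $n$, so the cluster of $(2x,0)$ in $T_n^{\leq t}$ is eventually empty; for the finitely many remaining $n$ it is a finite Young diagram because $T_n^{\leq t}$ is EG-admissible, and a finite union of these finite Young diagrams (all containing $(2x,0)$ when nonempty) is the desired $\lambda$. So assume $T_\infty(2x,0) \leq t+1$, and let $C$ be the connected component of $(2x,0)$ in the subgraph of $\st$ spanned by the cells $(p,q)$ with $T_\infty(p,q) \leq t+1$. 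Since tableau values are non-decreasing along directed paths, $C$ is downward closed, hence a Young diagram of $\st$; and since $T_\infty$ is graded EG-admissible, $T_\infty^{\leq t+1}$ is EG-admissible, so $C$ is finite. Consequently its boundary $\partial C$ is finite, and every cell of $\partial C$ has $T_\infty$-value strictly larger than $t+1$, for otherwise it would lie in $C$.

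The geometric heart of the argument is the observation that in $\st$ every neighbour of a cell of $C$ lies in $C \cup \partial C$: a neighbour on the level above, if not already in $C$, belongs to $\partial C$ by definition of the boundary; a neighbour on the level below a cell $c \in C$ is $\leq c$ in the partial order, hence lies in $C$ by downward closure (recall that every edge of $\st$ joins cells on consecutive levels). It follows that any connected subset $D$ of $\st$ containing $(2x,0) \in C$ and disjoint from $\partial C$ must be contained in $C$: were some vertex of $D$ outside $C$, a path inside $D$ from $(2x,0)$ to it would at some step pass from a cell of $C$ to a neighbour outside $C$, and that neighbour would lie in $\partial C$, contradicting $D \cap \partial C = \emptyset$.

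Finally I would bring in the convergence. Because $\partial C$ is a finite set on which $T_\infty > t+1 > t$, pointwise convergence in $[0,\infty]$ yields an $N_0$ such that $T_n > t$ on all of $\partial C$ whenever $n \geq N_0$; for such $n$ the cluster of $(2x,0)$ in $T_n^{\leq t}$ is a connected subset of $\st$ disjoint from $\partial C$ and, when nonempty, contains $(2x,0) \in C$, so by the previous step it is contained in $C$. For each of the finitely many $n < N_0$, graded EG-admissibility of $T_n$ makes $T_n^{\leq t}$ EG-admissible, so the cluster $D_n$ of $(2x,0)$ in $T_n^{\leq t}$ is itself a finite Young diagram. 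Then $\lambda = C \cup \bigcup_{n < N_0} D_n$ is a finite union of finite, downward closed, connected sets all sharing the cell $(2x,0)$ (discarding the empty ones), hence a finite Young diagram of $\st$, and it contains the cluster of $(2x,0)$ in $T_n^{\leq t}$ for every $n$, as required. The one place I expect to need a little care is the geometric claim of the third paragraph, that a $T_n^{\leq t}$-cluster cannot escape the Young diagram $C$ without meeting $\partial C$; everything else is bookkeeping with pointwise convergence and the definitions of (graded) EG-admissibility.
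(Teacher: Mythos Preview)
Your argument is correct and takes a genuinely different route from the paper. The paper argues by contradiction: assuming no finite bound exists, it builds a connected subgraph of ``bad'' cells (those lying on paths contained in infinitely many of the clusters), invokes K\"onig's infinity lemma to extract an infinite path of bad cells, and then passes to the limit along subsequences to exhibit an infinite cluster in $T_\infty^{\leq t}$, contradicting graded EG-admissibility. You instead argue directly and constructively: you use the finite cluster $C$ of $(2x,0)$ in $T_\infty^{\leq t+1}$ as a trap, observe that its boundary $\partial C$ is a finite set on which $T_\infty > t+1$, and let pointwise convergence push $T_n$ above $t$ on $\partial C$ for all large $n$, so that the $T_n^{\leq t}$-clusters cannot escape $C$. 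Your approach is more elementary---it avoids K\"onig's lemma and the diagonalisation of subsequences---and yields an explicit $\lambda$. The paper's approach, on the other hand, would generalise more readily to settings where one does not have a convenient ``slightly larger threshold'' barrier available, since it extracts the contradiction purely from the combinatorics of infinitely many large clusters. The geometric step you flag (that a connected set through $(2x,0)$ avoiding $\partial C$ stays in $C$) is indeed the crux, and your justification via downward closure of $C$ for lower neighbours and the definition of $\partial C$ for upper neighbours is exactly right.
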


\begin{proof}
This follows from a diagonalization argument, more precisely, K\"{o}nig's infinity lemma, which
states that every infinite connected graph with finite vertex degrees contains an
infinite path.

Suppose for the sake of a contradiction that the conclusion of the lemma fails.
Let $T_{n,x,t}$ denote the cluster of $(2x,0)$ in $T_n^{\leq t}$. Call a cell $(x',y') \in \st$
bad if there is a undirected path in $\st$ from $(2x,0)$ to $(x',y')$ that is contained
in infinitely many of the clusters $T_{n,x,t}$. Consider the connected
component of $(2x,0)$ in $\st$ that is spanned by the subgraph of bad vertices.
If the component is finite then there is a finite YD $\lm$ that contains the component.
This implies that for all sufficiently large $n$, every cell of $\partial \lm$ lies outside $T_{n,x,t}$
because any path from $(2x,0)$ to a cell outside $\lm$ must pass through $\partial \lm$.
Therefore, $T_{n,x,t} \subset \lm$ for all large $n$. Since every $T_n$ is
graded EG-admissible, this means that there is a finite YD that contains
every $T_{n,x,t}$, which is a contradiction.

Therefore, the connected component of $(2x,0)$ spanned by the bad vertices
is infinite. Since every vertex of $\st$ has degree at most 4, K\"{o}nig's lemma
provides an infinite path of (distinct) bad vertices $(x_0,y_0), (x_1,y_1), \ldots$
starting from $(x_0,y_0) = (2x,0)$. By definition of being bad, for every $m$,
there is a path from $(2x,0)$ to $(x_m,y_m)$ that is contained in some infinite subsequence
of the clusters $T_{n^m_i,x,t}$ with $n^m_i \to \infty$ as $i \to \infty$. Let $\ell_m$ be
the length of this path. Observe that $\ell_m \to \infty$ with $m$ because the distance
from $(2x,0)$ to $(x_m,y_m)$ in $\st$ must tend to infinity due to every vertex having degree
at most 4.

The YD $\lm_m$ formed by the cells of $\st$ that are at or below the cells on the path from
$(2x,0)$ to $(x_m,y_m)$ must be contained in every cluster $T_{n^m_i,x,t}$.
Since $T_n$ converges to $T_{\infty}$, this implies that $\lm_m \subset T_{\infty,x,t}$
for every $m$. Since $|\lm_m| \geq \ell_m \to \infty$, we deduce that $T_{\infty,x,t}$ is infinite.
However, this is a contradiction to $T_{\infty}$ being graded EG-admissible.
\end{proof}

\begin{thm} \label{thm:EGcontinuity}
Suppose a sequence of tableaux $T_n \to T_{\infty}$, and that every $T_n$ as well as $T_{\infty}$
is graded EG-admissible. Then $S(T_n) \to S(T_{\infty})$ as discrete subsets of $\Z \times \R_{\geq 0}$.
\end{thm}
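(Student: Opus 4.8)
The plan is to localize the statement: fixing a compact rectangle $R=[a,b]\times[0,T]$, I will show that for all large $n$ the swaps of $T_n$ inside $R$ are in bijection with those of $T_\infty$ inside $R$ with the matched points converging, and then obtain the stated convergence of $S(T_n)$ to $S(T_\infty)$ by letting $R$ range over a generating family of such rectangles (choosing $T$ so that $\partial R$ carries no point of $S(T_\infty)$). The substantive content is this labelled convergence on compact continuity rectangles; the passage to vague convergence of discrete subsets is then routine.

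The first step is a reduction to a finite computation. Using \eqref{eqn:fullswaps}, the relation $T^{\leq t_1}\leq T^{\leq t_2}$ for $t_1\leq t_2$, and Lemma \ref{lem:EGconsistent}, one verifies that $S(T)\cap(\Z\times[0,T])=S(T^{\leq T})$ for any graded EG-admissible $T$: the swap times issued on a finite tableau are exactly the sorted entry values, so a swap $(x,s)\in S(T)$ originates from the unique cell of value $s$, which lies in $T^{\leq s}$, and $T^{\leq s}\leq T^{\leq T}$ when $s\leq T$ forces $(x,s)\in S(T^{\leq T})$; conversely every swap of $T^{\leq T}$ has time at most $T$. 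Next, on a finite tableau the step-$j$ sliding path starts at a minimal cell $(2x_j,0)$ of the current support, and since supports only shrink, $(2x_j,0)$ lies in the original cluster; hence a swap in a row $x\in[a,b]$ can only come from a cluster of $T^{\leq T}$ containing the bottom cell $(2x,0)$. Because EG acts independently and row-disjointly across clusters, $S(T^{\leq T})\cap R$ depends only on these finitely many clusters.

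By Lemma \ref{lem:EGbounded} there is, for each $x\in\{a,\dots,b\}$, a finite Young diagram $\lm^{(x)}$ of $\st$ that contains the cluster of $(2x,0)$ in $T_n^{\leq T}$ for every $n$ (the argument there applies to $T_\infty$ as well), so every relevant cluster sits inside the fixed finite set $\Lambda=\bigcup_{a\leq x\leq b}\lm^{(x)}$. Choose $T$ outside the finite set of values that $T_\infty$ assumes on $\Lambda$. Since $T_n\to T_\infty$ entrywise and the finite entries of $T_\infty$ are distinct, for all large $n$ the cells of $\Lambda$ on which $T_n\leq T$ coincide with those on which $T_\infty\leq T$ (cells of infinite $T_\infty$-value leave $T_n^{\leq T}$ because $T_n\to\infty$ there), and the $T_n$-values on these cells occur in the same relative order as the $T_\infty$-values. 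Consequently the clusters of $T_n^{\leq T}$ and of $T_\infty^{\leq T}$ meeting $\{(2x,0):a\leq x\leq b\}$ agree as cell sets and carry the same order type of entries.

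The conclusion then follows from continuity of the finite EG algorithm: on a fixed finite Young diagram with a fixed order type of distinct entries, each sliding path is determined by finitely many comparisons, so the sequence of output rows is a fixed function of the order type, while the output times are exactly the order statistics of the entry values. Hence for large $n$ the rows of the swaps in $R$ stabilize to those of $S(T_\infty)\cap R$ and the times converge (order statistics of a convergent finite family), giving $S(T_n)\cap R\to S(T_\infty)\cap R$ as labelled points, and therefore $S(T_n)\to S(T_\infty)$ as discrete subsets of $\Z\times\R_{\geq 0}$. I expect the only real difficulty to be the bookkeeping in the reduction step — identifying, for each swap, the cluster it originates from and verifying $S(T)\cap(\Z\times[0,T])=S(T^{\leq T})$ — together with the harmless genericity choice of $T$; the crucial uniform finiteness of the relevant clusters is already isolated in Lemma \ref{lem:EGbounded}.
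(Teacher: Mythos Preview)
Your proposal is correct and follows essentially the same route as the paper: both localize via Lemma \ref{lem:EGbounded} to a fixed finite diagram $\Lambda$, use stabilization of the entry order of $T_n$ on $\Lambda$ to that of $T_\infty$, and conclude that the row-outputs of the finite EG algorithm stabilize while the swap times converge. One minor circularity to patch: you choose the threshold $T$ generically relative to the values of $T_\infty$ on $\Lambda$, but $\Lambda$ itself was built from $T$ --- this is harmless since clusters are monotone in the threshold (take $\Lambda$ for a slightly larger threshold first, then choose a generic $T$ below it).
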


\begin{proof}
A compact subset of $\Z \times \R_{\geq 0}$ is a finite, disjoint union of sets of the from
$\{x\} \times C$ for $x \in \Z$ and compact $C \subset \R_{\geq 0}$. Therefore, we must
show that for every such $x$ and $C$,
$$\limsup_n \; \# \,\big [S(T_n) \cap (\{x\} \times C)  \big]  \leq \# \,\big[ S(T_{\infty}) \cap (\{x\} \times C) \big].$$
Fix a $t > 0$ such that $C \subset [0,t]$.

Suppose $T$ is a graded EG-admissible tableau. The swaps of $T$ on $\{x\} \times [0,t]$
are the entries of $T^{\leq t}$ that exit from row $x$. Let $T_{x,t}$ denote the
cluster of $(2x,0)$ in $T^{\leq t}$. By Lemma \ref{lem:EGconsistent}, the swaps
of $T$ on $\{x\} \times [0,t]$ are completely determined by running the EG algorithm
on $T_{x,t}$. We deduce from Lemma \ref{lem:EGbounded} that there is a finite YD $\lm$ such that
$$\mathrm{support}(T_{n,x,t}) \subset \lm \;\;\text{for every}\;\; n\;\; \text{and}\;\; \mathrm{support}(T_{\infty,x,t}) \subset \lm.$$

Since $ \sup_{(x',y') \in \lm} | T_n(x',y') - T_{\infty}(x',y') | \to 0$,
we conclude that the following must occur for all sufficiently large $n$.
\begin{enumerate}
\item The order of the entries of $T_n$ on $\lm$ stabilizes to the order of the entries of $T_{\infty}$ on $\lm$.
\item For every $(x',y') \in \lm$, if $T_{\infty}(x',y') \notin C$ then $T_n(x',y') \notin C$.
\end{enumerate}

Once condition (1) holds then, due to $T_{n,x,t} \subset \lm$, a swap from $S(T_n)$
lies on $\{x\} \times C$ if and only if there is a cell $(x',y') \in \lm$ such that
$T_n(x',y') \in C$ and, when the EG algorithm is applied to $T_{\infty}$ restricted to $\lm$,
the entry at cell $(x',y')$ exits from row $x$. The same conclusion holds for swaps of
$S(T_{\infty})$ on $\{x\} \times C$. This property along with condition (2) implies that
$$S(T_n) \cap (\{x\} \times C) \subset S(T_{\infty}) \cap (\{x\} \times C) \;\;\text{for all large}\;\; n.$$
This completes the proof.
\end{proof}

\subsection{Completing the proof of Theorem \ref{thm:localsorting}} \label{sec:localsorting}
\smallskip

Theorem \ref{thm:localsorting} will follow from Theorem \ref{thm:EGcontinuity} once we prove
that the local tableau $\T_{\rm{edge}}$ is graded EG-admissible almost surely. To this end,
first observe that the entries of $\T_{\rm{edge}}$ are finite and distinct by part (1) of
Proposition \ref{thm:LSTstatistics}. We must show that, almost surely, the clusters
of $\T_{\rm{edge}}^{\leq t}$ are finite for every $t$.

By part (4) of Proposition \ref{thm:LSTstatistics}, the local tableau satisfies the following almost surely:
for every $t$ and $x$, there are integers $a,b \geq 0$ such that $\T_{\rm{edge}}(2x-2a,0) > t$
and $\T_{\rm{edge}}(2x+2b,0) > t$. When this property holds the tableau constraints imply that the
cluster of $\T_{\rm{edge}}^{\leq t}$ containing $(2x,0)$ must be contained within cells whose row
and column indices are both between $2x-2a$ and $2x+2b$. The set of such cells is finite, and so
the cluster of every bottom level cell in $\T_{\rm{edge}}^{\leq t}$ is finite.
Now if $\T_{\rm{edge}}(2x-k,k) \leq t$ then cell $(2x-k,k)$ belongs to the same cluster as
$(2x,0)$ in $\T_{\rm{edge}}^{\leq t}$ since the row entries are non-decreasing.
This implies that, almost surely, $\T_{\rm{edge}}^{\leq t}$ is EG-admissible for every $t$, as required.

Finally, we complete the proof.
The law of $S_{\alpha,n}$ is that of the Edelman-Greene algorithm applied to the
rescaled uniformly random staircase shaped tableau $\Tb^{\rm{rsc}}_{\sst}$ from (\ref{eqn:rsctab}).
Theorem \ref{corr:LST} asserts that $\Tb^{\rm{rsc}}_{\sst}$ converges weakly to
$\T_{\rm{edge}}$ as a tableau embedded in $\st$. By Skorokhod's representation theorem,
there exists random tableaux $\T_n$ and $\T_{\infty}$ defined on a common probability space
such that $\T_n$ has the law of $\Tb^{\rm{rsc}}_{\sst}$, $\T_{\infty}$ has the law of
$\T_{\rm{edge}}$, and $\T_n \to \T_{\infty}$ almost surely.

The tableaux $\T_n$ and $\T_{\infty}$ are graded EG-admissible almost surely.
Theorem \ref{thm:EGcontinuity} then implies that $S(\T_n)$ converges to $S(\T_{\infty})$ almost surely.
This means that $S_{\alpha,n}$, which has the law of $S(\T_n)$, converges weakly to $S(\T_{\rm{edge}})$,
which is the law of $S(\T_{\infty})$.
\qed

We conclude with some statistical properties of the local swap process.
\begin{prop} \label{thm:localsortingstats}
The process $S_{\rm{local}}$ has the following properties.
\begin{enumerate}
\item $S_{\rm{local}}$ is invariant under translations and reflection of the $\Z$-coordinate.
\item $S_{\rm{local}}$ is stationary in time in that for every $t \geq 0$, the process
$S_{\rm{local}}\cap (\Z \times \R_{\geq t})$ has the same law as (shifted) $S_{\rm{local}}$.
\item $S_{\rm{local}}$ is ergodic under translations of the $\Z$-coordinate in that the sigma-algebra
$$\F_{\rm{inv}} = \left \{ \text{Events of}\; S_{\rm{local}}\; \text{that are invariant under every translation}\right \} \;\;\text{is trivial} .$$
\end{enumerate}
\end{prop}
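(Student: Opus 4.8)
The plan is to obtain all three properties by pushing forward, through the deterministic local Edelman--Greene map $T\mapsto S(T)$, the corresponding properties of the input objects $\X_{\rm{edge}}$ and $\T_{\rm{edge}}$, supplemented for part (2) by the time-stationarity of prelimit sorting networks. Throughout I would use that $\T_{\rm{edge}}$ is almost surely graded EG-admissible with $S_{\rm{local}}=S(\T_{\rm{edge}})$, as established in Section \ref{sec:localsorting}, and that $S$ is continuous, hence Borel, on graded EG-admissible tableaux by Theorem \ref{thm:EGcontinuity}. For (1), the key point is that each translation $\phi_h:(x,y)\mapsto(x+2h,y)$ and the reflection $\phi_-:(x,y)\mapsto(-x,y)$ is an automorphism of the directed graph $\st$ that fixes the bottom level $\{y=0\}$, preserves the partial order, and permutes the two down-edges at each vertex. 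Consequently the EG algorithm is equivariant: running it on $T\circ\phi$ produces the image under $\phi$ of the result of running it on $T$, so $S(T\circ\phi_h)$ is a spatial translate of $S(T)$ and $S(T\circ\phi_-)$ its spatial reflection (the factor $2$ disappearing because cell $(2x,0)$ carries the swap on line $x$). Since the law of $\T_{\rm{edge}}$ is translation- and reflection-invariant by Proposition \ref{thm:LSTstatistics}(2), and $\T_{\rm{edge}}\circ\phi$ is then again a.s. graded EG-admissible, it follows that $S_{\rm{local}}=S(\T_{\rm{edge}})\overset{law}{=}S(\T_{\rm{edge}}\circ\phi)$ is a translate, resp.\ reflection, of $S_{\rm{local}}$.

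For (2), I would return to the prelimit. Fix $t\ge 0$ and $T_0>0$. Shifting the time-window $[0,T_0]$ to $[t,t+T_0]$ amounts to shifting the swap index $i$ by $\lfloor tn/(2\sqrt{1-\alpha^2})\rfloor=o\big(\binom{n}{2}\big)$, which stays in the bulk and does not touch spatial positions; so by the time-stationarity of the swap process of a uniformly random sorting network (\cite{AHRV}), the point process $\big(S_{\alpha,n}\cap(\Z\times[t,t+T_0])\big)-(0,t)$ has, up to an $O(1/n)$ shift in the time coordinate, the same law as $S_{\alpha,n}\cap(\Z\times[0,T_0])$. Letting $n\to\infty$ and invoking Theorem \ref{thm:localsorting}, both sides converge, the first to $\big(S_{\rm{local}}\cap(\Z\times[t,t+T_0])\big)-(0,t)$ and the second to $S_{\rm{local}}\cap(\Z\times[0,T_0])$, provided $S_{\rm{local}}$ a.s.\ places no point on the horizontal lines at heights $0,t,T_0,t+T_0$ so that the time-slab restriction is a.s.\ continuous in the topology on discrete sets. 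Since $T_0$ is arbitrary this gives $\big(S_{\rm{local}}\cap(\Z\times\R_{\ge t})\big)-(0,t)\overset{law}{=}S_{\rm{local}}$.

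For (3), let $E$ be an event of $S_{\rm{local}}$ invariant under every spatial translation. By the equivariance of $S$ from part (1), the pullback $S^{-1}(E)$ is an event of $\T_{\rm{edge}}$ invariant under every $\phi_h$, hence $\pr{\T_{\rm{edge}}\in S^{-1}(E)}\in\{0,1\}$ by the ergodicity of $\T_{\rm{edge}}$ under translations (Proposition \ref{thm:LSTstatistics}(3)); this probability equals $\pr{S_{\rm{local}}\in E}$, so $E$ is trivial. I expect the main obstacle to be part (2): it genuinely needs the prelimit input, since $\X_{\rm{edge}}$ is not stationary in its second coordinate (see the diagonal kernel in Proposition \ref{prop:Kedge}) and the EG sliding paths move points across times, so no direct argument on the $\X_{\rm{edge}}$ side appears available. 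The delicate point in carrying it out is justifying the exchange of limits, i.e.\ checking that $S_{\rm{local}}$ a.s.\ charges no fixed horizontal line; this should follow by identifying the time-intensity of $S_{\rm{local}}$ (it ought to be a constant multiple of Lebesgue measure on $(0,\infty)$, readable from the prelimit swap density together with the stationarity, in the spirit of Lemma \ref{lem:intensity}), after which the restriction functional is a.s.\ continuous under the weak convergence of Theorem \ref{thm:localsorting}.
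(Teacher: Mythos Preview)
Your proposal is correct and follows essentially the same approach as the paper: parts (1) and (3) are proved by equivariance of the local Edelman--Greene map under $\phi_h,\phi_-$ together with the corresponding invariance/ergodicity of $\T_{\rm{edge}}$ from Proposition~\ref{thm:LSTstatistics}, and part (2) is deduced from the time-stationarity of finite random sorting networks in \cite{AHRV} passed to the limit via Theorem~\ref{thm:localsorting}. The paper's own proof is considerably terser and does not spell out the continuity issue you flag in (2) (that restriction to a time slab requires $S_{\rm{local}}$ to a.s.\ charge no fixed horizontal line); your identification of this point and proposed resolution via the time-intensity is a genuine addition of rigor rather than a different method.
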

\begin{rem} We believe that $S_{\rm{local}}$ is also ergodic in the time coordinate.
	However, the proof of this is more challenging and, therefore, we leave it as a conjecture.
\end{rem}
\begin{proof}
We have that $S_{\rm{local}} = S(\T_{\rm{edge}})$ in law. Applying a $\Z$-automorphism to
$S_{\rm{local}}$ is the same as first applying its analogue to $\T_{\rm{edge}}$ (the maps $\phi_h$ and $\phi_{-}$),
and then applying the EG algorithm to the resulting tableau. Thus, the invariance of $S_{\rm{local}}$
under $\Z$-automorphisms follows from the corresponding invariance of $\T_{\rm{edge}}$ stated
in Proposition \ref{thm:LSTstatistics}.

Time stationarity of $S_{\rm{local}}$ is a consequence of the stationarity of finite
random sorting networks \cite[Theorem 1(i)]{AHRV}, as we explain. If $(s_1,\ldots, s_N)$
is the sequence of swaps of a random sorting network of $\mathfrak{S}_n$, then
$(s_1,\ldots, s_{N-1})$ has the same law as $(s_2, \ldots, s_N)$.

The ergodicity of $S_{\rm{local}}$ under $\Z$-translations is a consequence of the ergodicty of
$\T_{\rm{edge}}$ under translations (part 3 of Proposition \ref{thm:LSTstatistics}). Indeed,
a translation invariant event for $S_{\rm{local}}$ is the image of a translation invariant event of
$\T_{\rm{edge}}$ under the EG algorithm.
\end{proof}

\newpage

\end{document}